\newtheorem{theorem}{Theorem}
\newtheorem*{theorem*}{Theorem}
\newtheorem{lemma}[theorem]{Lemma}
\newtheorem{corollary}[theorem]{Corollary}
\newtheorem*{claim*}{Claim}
\newcommand{\E}{\mathbb{E}}
\newcommand{\Pb}{\mathbb{P}}
\newcommand{\N}{\mathbb{N}}
\newcommand{\R}{\mathbb{R}}
\newcommand{\Gnp}{\mathcal{G}(n,p)}
\newcommand{\Gnm}{\mathcal{G}(n,m)}
\newcommand{\mc}{\mathcal}
\newcommand{\parenth}[1]{\left(#1\right)}
\newcommand{\Deltag}{\Delta}
\newcommand{\ceilnk}{{\left\lceil \frac{n}{k}\right\rceil}}
\newcommand{\floornk}{{\left\lfloor \frac{n}{k}\right\rfloor}}
\newcommand{\bfr}{\mathbf{r}}
\newcommand{\constcone}{{100}}
\newcommand{\ftn}{\theta}
 \title{The chromatic number of dense random graphs}
\renewcommand{\le}{\leqslant}
\renewcommand{\ge}{\geqslant}
\renewcommand{\epsilon}{\varepsilon}
\renewcommand{\marginnote}[2][]{}
\author{Annika Heckel
\thanks{Mathematical Institute, University of Oxford,
Andrew Wiles Building, Woodstock Road, Oxford OX2~6GG, UK. E-mail: 
\texttt{heckel@maths.ox.ac.uk}
}
}
\date{\today 
}
\begin{document}

\maketitle
\begin{abstract}
The chromatic number $\chi(G)$ of a graph $G$ is defined as the minimum number of colours required for a vertex colouring where no two adjacent vertices are coloured the same. The chromatic number of the dense random graph $G \sim\Gnp$ where $p \in (0,1)$ is constant has been intensively studied since the 1970s, and a landmark result by Bollob\'as in 1987 first established the asymptotic value of $\chi(G)$ \cite{bollobas1988chromatic}. Despite several improvements of this result, the exact value of $\chi(G)$ remains open. In this paper, new upper and lower bounds for $\chi(G)$ are established. These bounds are the first ones that match each other up to a term of size $o(1)$ in the denominator: they narrow down the colouring rate $n/\chi(G)$ of $G \sim \Gnp$ to an explicit interval of length $o(1)$, answering a question of Kang and McDiarmid \cite{mcdiarmidsurvey}.
 \end{abstract}

\section{Introduction and results}

For $p \in [0,1]$, we denote by $G \sim \Gnp$ the \emph{Erd\H{o}s--R{\'e}nyi random graph} with $n$ labelled vertices where each of the $\binom{n}{2}$ possible edges is present independently with probability $p$. The \emph{chromatic number} $\chi(G)$ of a graph $G$ is defined as the minimum number of colours required for a \emph{proper colouring} of the vertices of $G$, which is a colouring where no two adjacent vertices are coloured the same. The chromatic number is one of the central topics both in graph theory in general and in the study of random graphs in particular, and has a wide range of applications including scheduling and resource allocation problems.

We say that an event $E=E(n)$ holds \emph{with high probability} (whp) if $\lim_{n \rightarrow \infty} \Pb(E) =1$. For two functions $f, g: \N \rightarrow \R$, we write $f = o(g)$ if $f(n)/g(n)\rightarrow 0$ as $n \rightarrow \infty$.  
The order of magnitude of the chromatic number of the dense random graph $G\sim \Gnp$ with constant $p\in (0,1)$ was first established by Grimmett and McDiarmid in 1975, who showed that whp,
\[
 (1+o(1)) \frac{n}{2\log_b n} \le \chi(G) \le  (1+o(1)) \frac{n}{\log_b n},
\]
where $b=\frac{1}{1-p}$. They also conjectured that the asymptotic value of $\chi(G)$ lies near the lower bound. Establishing the asymptotic behaviour of $\chi(G)$ remained one of the major open problems in random graph theory until it was settled by a breakthrough result of Bollob\'as in 1987 \cite{bollobas1988chromatic}, who showed that whp,
\[
 \chi(G) = (1+o(1)) \frac{n}{2\log_b n}.
\]
The same result was obtained independently by Matula and Ku\v cera \cite{matula:exposeandmerge}.

Refining Bollob\'as' approach, more accurate bounds were given by McDiarmid \cite{mcdiarmid1989method,mcdiarmid:chromatic}, who showed in particular that whp,
\[
 \chi(G) = \frac{n}{2 \log_b n -2 \log_b \log_b n+O(1)}.
\]
The current best upper bound was obtained by Fountoulakis, Kang and McDiarmid \cite{fountoulakis2010t} through a very accurate analysis of Bollob\'as' general approach, whereas the best lower bound comes from a first moment argument due to Panagiotou and Steger \cite{panagiotou2009note}: let  
\begin{equation}
\gamma = \gamma_p(n)= 2 \log_b n - 2 \log_b \log_b n - 2 \log_b 2, \label{defofgamma}
\end{equation}
then whp,
\begin{equation}
 \label{bestknownbounds}
\frac{n}{\gamma+o(1)} \le  \chi(G) \le \frac{n}{\gamma-1+o(1)}.
\end{equation}
As observed in \cite{fountoulakis2010t}, considering the above in terms of the \emph{colouring rate} $\bar \alpha(G)= n / \chi(G)$, which is the average colour class size of a proper colouring with the minimum number of colours, these inequalities give an explicit interval of length $1+o(1)$ which contains $\bar \alpha(G)$ whp. In \cite{mcdiarmidsurvey}, Kang and McDiarmid remark that it is a natural problem to determine the value of $\bar \alpha(G)$ up to an error of size $o(1)$.

The following result settles this question, giving new upper and lower bounds for $\chi(G)$ which match up to the $o(1)$ term in the denominator.

\begin{theorem}
\label{maintheorem}
Let $p\in(0,1)$ be constant, and consider the random graph $G \sim \mathcal{G}(n,p)$. Let $q=1-p$, $b=\frac{1}{q}$, $\gamma=\gamma_p(n)=2 \log_b n - 2 \log_b \log_b n - 2 \log_b 2$ and $\Deltag= \Deltag_p(n) = \gamma-\left\lfloor \gamma \right \rfloor$. Then whp,
\begin{equation*}
 \chi(G) = \frac{n}{\gamma-x_0+o(1)},
\end{equation*}
where $x_0\ge 0$ is the smallest nonnegative solution of
\begin{equation}
\label{aste}
 (1-\Deltag+x) \log_b (1-\Deltag+x)+\frac{(\Deltag-x)(1-\Deltag)}{2} \le 0.
\end{equation}
\end{theorem}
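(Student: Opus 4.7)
The plan is to prove matching upper and lower bounds on the colouring rate $\bar\alpha(G) = n/\chi(G)$, with $x_0$ emerging as the precise threshold. The previous bounds in (\ref{bestknownbounds}) differ by $1$ in the denominator essentially because the first-moment lower bound of Panagiotou--Steger assumes colour classes of a single size, whereas the upper bound construction of Fountoulakis--Kang--McDiarmid produces many independent sets of size roughly $\lceil \gamma \rceil$ but handles the remainder inefficiently. The asymmetry should be resolved by considering colourings whose colour classes take two consecutive sizes $k$ and $k+1$, where $k=\lfloor \gamma \rfloor$, with $x_0$ controlling the ratio between them: it is the threshold at which the expected number of such mixed colourings transitions from growing exponentially to decaying to zero.

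For the lower bound $\chi(G) \ge n/(\gamma - x_0 + o(1))$, I would apply a first moment argument to this refined family of colourings. Fix $0 \le x < x_0$; by the definition of $x_0$ as the smallest nonnegative solution of (\ref{aste}), one has the strict inequality $(1-\Deltag+x)\log_b(1-\Deltag+x) + (\Deltag-x)(1-\Deltag)/2 > 0$. A colouring with average class size at least $\gamma - x$ whose classes have size $k$ or $k+1$ must contain approximately a $(\Deltag - x)$-fraction of size-$(k+1)$ classes and a $(1 - \Deltag + x)$-fraction of size-$k$ classes among its $n/(\gamma - x)$ total classes. Counting such partitions via multinomial coefficients and multiplying by the probabilities $q^{\binom{k}{2}}$ and $q^{\binom{k+1}{2}}$ that each part is independent, Stirling's approximation should yield an expected count of the form $b^{-cn + o(n)}$ where $c$ equals the left-hand side of (\ref{aste}); under strict inequality this count is $o(1)$. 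One must then rule out colourings with even larger average class size, which amounts to optimising over all admissible class-size profiles (including sizes $\ge k+2$) and showing that the infimum of the relevant entropy functional is attained at the two-size profile, which I expect to follow from convexity.

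For the upper bound $\chi(G) \le n/(\gamma - x_0 + o(1))$, I would refine the iterative extraction approach of \cite{bollobas1988chromatic,fountoulakis2010t}. At each step, rather than greedily taking a single largest independent set, the construction targets independent sets of size $k$ or $k+1$ in the proportions dictated by $x_0$. The existence of such an independent set in the residual graph at each step should follow from a concentration argument (Janson's inequality or a second moment calculation) applied to the number of independent sets of the prescribed size; the definition of $x_0$ guarantees that this expected number tends to infinity on the feasible side of (\ref{aste}). After extracting enough classes to reduce the residual graph to $o(n/\log n)$ vertices, the remainder is coloured with one colour per vertex, which is absorbed into the $o(1)$ error term.

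The main obstacle will be the upper bound: finding independent sets of precisely the prescribed sizes in residual graphs whose effective parameters $\gamma$, $\Deltag$, and $x_0$ drift with each extraction. One needs to verify that the residual graphs remain sufficiently random-like, perhaps by tracking degrees and codegrees as in the Fountoulakis--Kang--McDiarmid analysis, and to monitor how the target sizes evolve over $\Theta(n/\log n)$ iterations. Sharp concentration, likely via a martingale-based argument on the number of independent sets of the target size in the residual graph, is needed so that the cumulative fluctuation in the ratio of class sizes remains $o(1)$ in the denominator. A secondary subtlety on the lower bound side is the optimisation of the entropy functional over general admissible profiles; this should reduce to a finite-dimensional convex minimisation attained at the two-size profile, but verifying this cleanly requires care.
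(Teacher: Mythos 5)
Your lower-bound strategy is broadly in the spirit of the paper, though the paper's actual argument is leaner than what you describe: rather than computing the first moment of entire colourings with a prescribed two-size class profile and then optimising an entropy functional over all admissible profiles, the paper restricts attention to the \emph{large} colour classes alone. When $p>1-1/e^2$ the independence number $\alpha(G)$ lands at $a=\lfloor\gamma\rfloor+1$ or $a+1$ whp, with only $o(n/\log n)$ independent sets of size $a+1$, which forces any $l$-colouring (with $l=\lfloor n/(\gamma-x_0+\epsilon)\rfloor$) to contain a proportion of roughly $\Delta-x_0+\epsilon$ colour classes of size exactly $a$. The paper then computes the first moment of the number of collections of $s\approx(\Delta-x_0+\epsilon/2)n/(2\log_b n)$ disjoint size-$a$ independent sets (``precolourings'') and shows it is $o(1)$ exactly because of condition~(\ref{aste}). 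By isolating the large classes, this sidesteps the profile-optimisation step you flag as a subtlety. Your version could in principle be made to work, but the paper's formulation is cleaner and your entropy optimisation claim is not verified.

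The upper bound is where your proposal has a genuine gap. You propose to refine the iterative extraction approach of Bollob\'as and Fountoulakis--Kang--McDiarmid, targeting independent sets of size $\lfloor\gamma\rfloor$ or $\lceil\gamma\rceil$ in prescribed proportions and tracking how the residual graph's effective parameters drift. The paper explicitly does \emph{not} do this, stating that the upper bound ``will not be obtained through a variant of Bollob\'as' method but through the second moment method.'' The iterative extraction analysis in \cite{fountoulakis2010t} already pushes that method essentially to its limit, giving $\chi(G)\le n/(\gamma-1+o(1))$; the difficulty is not merely bookkeeping but that the residual graph shrinks over $\Theta(n/\log n)$ extractions, and in the late stages one cannot reliably extract independent sets of size $\lfloor\gamma\rfloor$ or larger from it, because for $p>1-1/e^2$ such sets are near-maximum and scarce. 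The paper circumvents this entirely: it studies $Z_k$, the number of \emph{equitable} $k$-partitions inducing proper colourings, proves the second moment bound $\E[Z_k^2]/\E[Z_k]^2\le\exp(n/\log^7n)$ by a delicate three-regime analysis of pairwise overlaps (including an asymptotic enumeration of $0$--$1$ matrices with prescribed margins via McKay's theorem), and then \emph{boosts} the resulting weak lower bound $\Pb(Z_k>0)\ge\exp(-n/\log^7n)$ to a whp statement using the vertex-exposure martingale and Azuma--Hoeffding. That boosting step --- converting a success probability that tends to $0$ into a whp guarantee by exploiting the tight concentration of $\chi(G)$ around its mean --- is the crux of the upper bound and is entirely absent from your proposal; your mention of martingales concerns concentration of counts in the residual graph, which is a different and, for this purpose, insufficient tool. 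Without the global second-moment-plus-boosting machinery, the upper bound cannot reach $\gamma-x_0$, and your proposed route is not expected to close the gap.
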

As $\Delta \ge 0$ is a solution of (\ref{aste}), $x_0$ is well-defined and $0 \le x_0 \le \Delta$. We will see in Lemmas \ref{corollary0} and \ref{corollarybounds} that for $p \le 1-1/e^2$, the smallest nonnegative solution of (\ref{aste}) is $x_0=0$, while for $p>1-1/e^2$, the solutions of (\ref{aste}) depend not only on $p$ but also on $n$, and we have $0 \le x_0 \le 1-\frac{2}{\log b}$ (in fact, the values of $x_0$ are dense in the interval $[0,1-\frac{2}{\log b}]$). Therefore, we can derive the following simpler bounds.
\begin{corollary}
\label{maincorollary}
Let  $p \in (0,1)$ be constant, and define $b$ and $\gamma$ as in Theorem \ref{maintheorem}. Consider the random graph $G \sim \mathcal{G}(n,p)$.
\begin{enumerate}[a)]
 \item If $p\le 1-1/e^2$, then whp,
\begin{equation*}
\chi(G) = \frac{n}{\gamma+o(1)} .
\end{equation*}
\item If $p>1-1/e^2$, then whp,
\[
 \frac{n}{\gamma+o(1)} \le \chi(G) \le \frac{n}{\gamma-1+\frac{2}{\log b}+o(1)}.
\]
\end{enumerate}
\end{corollary}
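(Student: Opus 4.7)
The plan is to derive the corollary directly from Theorem \ref{maintheorem} by analysing the inequality (\ref{aste}) to control the value of $x_0$; the text announces this via Lemmas \ref{corollary0} and \ref{corollarybounds}, so I would first prove those two lemmas and then assemble the corollary in one line. Write $f(x) = (1-\Delta+x)\log_b(1-\Delta+x) + (\Delta-x)(1-\Delta)/2$ for the left-hand side of (\ref{aste}). A direct check gives $f(\Delta) = 0$, which in particular re-proves $0 \le x_0 \le \Delta$.

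To determine when $x_0 = 0$, I would compute
\[
f(0) = \frac{1-\Delta}{\ln b}\left(\ln(1-\Delta) + \tfrac{1}{2}\Delta \ln b\right),
\]
so the sign of $f(0)$ matches the sign of $g(\Delta) := \ln(1-\Delta) + \Delta(\ln b)/2$. Since $g(0)=0$ and $g'(\Delta) = -1/(1-\Delta) + (\ln b)/2$, a short calculus argument shows $g(\Delta) \le 0$ for every $\Delta \in [0,1)$ if and only if $\ln b \le 2$, that is, $p \le 1 - 1/e^2$. This proves Lemma \ref{corollary0} and yields part (a) upon plugging $x_0=0$ into Theorem \ref{maintheorem}.

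For Lemma \ref{corollarybounds} I would verify $f(1 - 2/\log b) \le 0$ uniformly in $\Delta \in [0,1)$ whenever $p > 1 - 1/e^2$ (so that $1 - 2/\log b > 0$). Together with the always-valid bound $x_0 \le \Delta$, this gives $x_0 \le 1 - 2/\log b$. After substituting $x = 1 - 2/\log b$ and multiplying by $\ln b$, the claim reduces to a one-variable estimate in $\Delta$ which I would prove by examining the derivative and the boundary behaviour at $\Delta = 0$ and $\Delta \to 1$. The main technical hurdle is precisely this uniformity in $\Delta$: since $\Delta = \gamma - \lfloor \gamma \rfloor$ oscillates with $n$, the estimate must hold for every $\Delta \in [0,1)$, and the two critical regimes are $\Delta$ near the threshold where $f(0) = 0$ and $\Delta$ close to $1$, where the logarithmic term becomes small but the linear term does not.

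Given these bounds, part (b) is immediate from Theorem \ref{maintheorem}: the lower bound uses $x_0 \ge 0$ and the upper bound uses $x_0 \le 1 - 2/\log b$, because the function $x \mapsto n/(\gamma - x + o(1))$ is monotone increasing in $x$ for fixed $\gamma$ and $n$ large enough. In particular, the $o(1)$ error from Theorem \ref{maintheorem} absorbs any slack, so no separate approximation step is needed once the two lemmas are in hand.
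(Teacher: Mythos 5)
Your proposal follows the paper's route exactly: derive the corollary from Theorem \ref{maintheorem} together with Lemmas \ref{corollary0} and \ref{corollarybounds}, and your sketches of the two lemmas match the paper's proofs up to cosmetic choices (for Lemma \ref{corollary0} the paper bounds $\log_b(1-\Delta)+\Delta/2$ directly via $\log(1-y)\le -y$ rather than your monotonicity argument for $g$, and for Lemma \ref{corollarybounds} it splits into $\Delta\le 1-2/\log b$, which is trivial since $x_0\le\Delta$, and $\Delta>1-2/\log b$, where it checks $\varphi(1-\tfrac{2}{\log b})\le 0$ by a convexity/boundary argument). One small correction to your sketch of Lemma \ref{corollarybounds}: the tight case of $\varphi(1-\tfrac{2}{\log b})\le 0$ is not at $\Delta\in\{0,1\}$ or at the zero of $\varphi(0)$, but at $\Delta=1-\tfrac{2}{\log b}$ itself (where $\varphi(1-\tfrac{2}{\log b})=\varphi(\Delta)=0$), which is precisely why the paper's case split is the clean way to organise the estimate.
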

For $p\le 1-1/e^2$, the lower bound in Theorem \ref{maintheorem} is simply the known lower bound (\ref{bestknownbounds}) due to Panagiotou and Steger, which was obtained by estimating the first moment of the number of vertex partitions which induce proper colourings. The \emph{first moment threshold} of this random variable, i.e., the point where the first moment changes from tending to $0$ to tending to $\infty$, occurs at about $\frac{n}{\gamma+o(1)}$ colours.

For $p >1-1/e^2$, we shall also employ the first moment method to establish our new lower bound, although a different first moment threshold will take precedence. The \emph{independence number} $\alpha(G)$ is defined as the size of the largest \emph{independent set} in~$G$, i.e., the largest set of vertices without any edges between them. For $G\sim\Gnp$ with $p$ constant, $\alpha(G)$ takes one of at most two explicitly known consecutive values whp (for more details see Section \ref{overviewsection}). In a proper colouring, each colour class forms an independent set, and so no colour class can be larger than $\alpha(G)$. It will turn out that for $p>1-1/e^2$, $\gamma$ is so close to the likely values of $\alpha(G)$ that the hardest part in colouring $G$ is finding a sufficient number of disjoint independent sets of size $\left \lceil \gamma \right \rceil$ or larger. If we colour $G$ with about $\frac{n}{\gamma-x}$ colours for some $x>0$, then the average colour class size is about $\gamma-x$. If the independence number $\alpha(G)$ takes one of its likely values, then every such colouring must contain a partial colouring of a certain size consisting only of colour classes with at least $\gamma$ vertices. Condition (\ref{aste}) describes the first moment threshold of the number of such partial colourings.

The upper bound in Theorem \ref{maintheorem} is much harder to prove. In contrast to previous upper bounds, it will not be obtained through a variant of Bollob\'as' method but through the second moment method and our approach will be outlined in Section \ref{overviewsection}.

Analysing the second moment of the number of colourings of a random graph is a notoriously hard problem, as it involves examining the joint behaviour of all pairs of possible colourings, which varies considerably depending on how similar they are to each other. It has been previously studied in the sparse case where $p(n)$ tends to $0$ sufficiently quickly. Most notably, for $p=d/n$ where $d$ is constant, Achlioptas and Naor \cite{achlioptas2005two} used the second moment method to give two explicit values which the chromatic number of $\Gnp$ may take whp, and determined the chromatic number exactly for roughly half of all values $d$. Recently, Coja-Oghlan and Vilenchik \cite{coja2015chromatic} extended this result to almost all constant values $d$. For $p=n^{-c}$ where $3/4< c \le1$, Coja-Oghlan, Panagiotou and Steger \cite{coja2007chromatic} gave three explicit values for the chromatic number. In the dense case, however, the situation is quite different because the number of colours is much larger: in~\cite{achlioptas2005two}, the chromatic number is of order $O(1)$, whereas in our setting it is of order $\Theta(n/\log n)$.

We will distinguish three different ranges of ``overlap'' between different pairs of colourings; each range requires different tools and ideas which will be outlined in Section \ref{outlinesecondmoment}.

\section{Outline}
\label{overviewsection}
From now on, let $p \in (0,1)$ be constant and $G \sim \Gnp$. 
\subsubsection*{Independence number, first moment method and the lower bound}

The chromatic number $\chi(G)$ of a random graph $G$ is closely linked to  the independence number $\alpha(G)$, and the behaviour of the independence number of random graphs is very well understood. Recall that $b=1/(1-p)$, and let
\[\alpha_0 = 2\log_b n - 2 \log_b \log_b n +2 \log_b \parenth{e/2 }+1 = \gamma+\frac{2}{\log b}+1 .\]
For $p$ constant, Bollob\'as and Erd\H{o}s showed in 1976 (\cite{erdoscliques}, see also Chapter 11 in \cite{bollobas:randomgraphs}) that whp,
\begin{equation}
\label{independencenumber}
 \alpha(G) = \left \lfloor \alpha_0+o(1)\right \rfloor = \left \lfloor \gamma+\frac{2}{\log b}+1+ o(1)\right \rfloor,
\end{equation}
pinning down $\alpha(G)$ to at most two values whp.

In a proper colouring each colour class forms an independent set, so for any graph $G$, $\chi(G) \ge n/\alpha(G)$. For a long time, the best known lower bound for the chromatic number of dense random graphs was obtained from this simple fact. McDiarmid \cite{mcdiarmid:chromatic} sharpened this to $n/(\alpha_0-1+o(1))$ by considering the first moment of the number of independent sets of a certain size, and finally Panagiotou and Steger \cite{panagiotou2009note} used a first moment argument on the number of colourings instead to show $\chi(G) \ge \frac{n}{\gamma+o(1)}$ whp.

The \emph{first moment method} is a simple yet powerful tool, and is based on the observation that for any integer random variable $X \ge 0$, if the first moment $\E[X]$ tends to $0$, then by Markov's inequality, $\Pb(X>0)=\Pb(X\ge1)$ tends to $0$ as well. In \cite{panagiotou2009note}, $X$ is the number of all vertex partitions of $G$ which induce valid colourings (i.e., unordered colourings) with $\frac{n}{\gamma+o(1)}$ colours. Since $\E[X]\rightarrow 0$ for an appropriate choice of the $o(1)$ term in the denominator, it follows that whp no proper colouring with this number of colours exists, and the lower bound (\ref{bestknownbounds}) follows.

It turns out, however, that for $p>1-1/e^2$, the chromatic number of $\Gnp$ can not in general be found near $\frac{n}{\gamma}$. This is because for colourings with about $\frac{n}{\gamma}$ colours, the average colour class size $\gamma$ gets so close to $\alpha(G)$ that there are simply not enough disjoint independent sets of size at least $a := \left \lfloor \gamma \right \rfloor +1$. 

Note that in this case $\alpha_0-\gamma=\frac{2}{\log b}+1\in(1,2)$, so it follows from (\ref{independencenumber}) that $\alpha(G)=a$ or $\alpha(G)=a+1$ whp as shown in Figure \ref{figurecases}. In particular, there are whp no independent sets larger than $a+1$. Therefore, any colouring with average colour class size about $\gamma$ must contain a certain proportion of colour classes of size at least $a$ (and at most $a+1$).

In Section \ref{lowerboundsection}, we shall consider the number of such partial colourings with large colour classes (or rather, the number of sets of disjoint large independent sets inducing them) which are required for colourings with average colour class size of a little more than $\gamma-x_0$, where $x_0$  is the solution of (\ref{aste}). We will show that their expected number is $o(1)$, so whp no such partial colouring and hence no such complete colouring of $G$ exists.

\begin{figure}[tb]
\begin{center}
\begin{overpic}[width=0.7\textwidth]{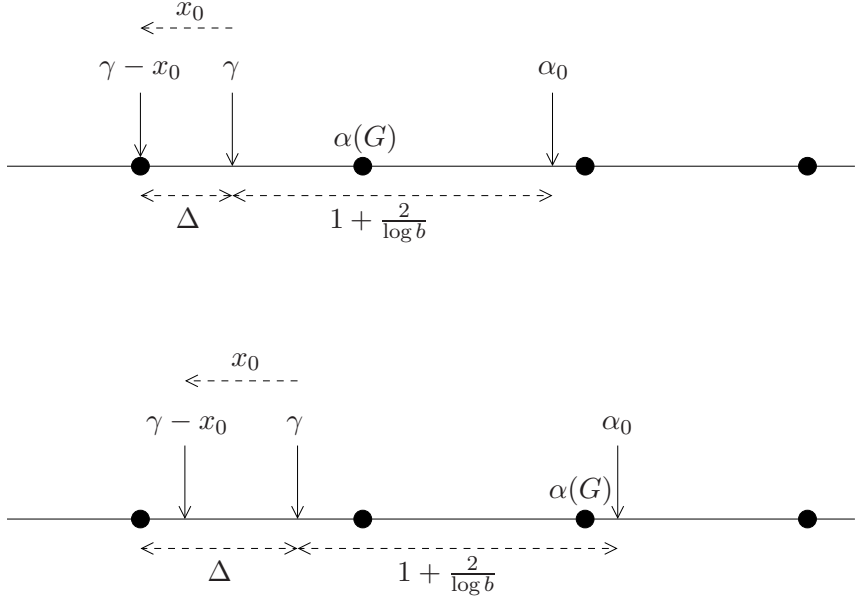}
\put(20,64){$x_0$}
\put(11,57){$\gamma-x_0$}
\put(25.5,57){$\gamma$}
\put(62.5,57){$\alpha_0$}
\put(38.5,48.5){$\alpha(G)$}
\put(38,38.8){$1+\frac{2}{\log b}$}
\put(20,38.8){$\Delta$}
\put(26.5,22.5){$x_0$}
\put(16.5,15){$\gamma-x_0$}
\put(33,15){$\gamma$}
\put(70,15){$\alpha_0$}
\put(63.8,6.8){$\alpha(G)$}
\put(46,-3){$1+\frac{2}{\log b}$}
\put(23.7,-3){$\Delta$}
\end{overpic}
\vspace{15pt}
\caption{\label{figurecases} If $p>1-1/e^2$, whp either $\alpha(G)=a=\left \lfloor \gamma \right \rfloor+1 $ (top picture) or $\alpha(G)=a+1$ (bottom picture). In the first case, there are only $o\parenth{n / \gamma}$ independent sets of size $\left \lfloor \gamma \right \rfloor+1 $, and so the colouring rate $n/\chi(G)$ drops back to the next smaller integer $\left \lfloor \gamma \right \rfloor$, i.e., $x_0=\Delta$. In the second case, there are enough independent sets of size $\left \lfloor \gamma \right \rfloor+1 $, but not necessarily enough \emph{disjoint} ones, and we have to correct the colouring rate $n/\chi(G)$ by $x_0 \in [0, 1-\frac{2}{\log b}]$ to reflect this. }
\end{center}
\end{figure}

\subsubsection*{The second moment method and the upper bound}

The upper bound in Theorem \ref{maintheorem} will be proved using the second moment method. Fix an arbitrary $\epsilon>0$. If we can show that whp,
\begin{equation}
\label{upperboundeps}
 \chi(G) \le \frac{n}{\gamma-x_0-2\epsilon},
\end{equation}
this suffices to establish the upper bound in Theorem \ref{maintheorem}. Let $\ftn=\ftn(n)\in [0,\epsilon]$ be a function, and let 
\[
k= k(n) = \left \lceil \frac{n}{\gamma - x_0-\ftn} \right \rceil.
\]
We will study $k$-colourings of $G$. For the proof of Theorem \ref{maintheorem}, we will simply pick $\ftn(n)=\epsilon$ for every $n$, but most of the second moment calculations are valid for this more general definition of $k$. To make them available for reuse in related contexts, we will work with an arbitrary function $\ftn \in [0,\epsilon]$ and then state explicitly when we only consider the special case $\ftn=\epsilon$.

 In the following, we will only consider \emph{equitable $k$-colourings} where the sizes of the colour classes differ by at most~$1$ (the method fails if we allow general colourings). We call a vertex partition into $k$ parts a \textit{$k$-equipartition} if the part sizes differ by at most~$1$. An ordered partition ist called an \textit{ordered $k$-equipartition} if the sizes of the $k$ parts differ by at most~$1$ and decrease in size (so the parts of size $\ceilnk$ come first, followed by the parts of size~$\floornk$).

Denote by $Z_k$ the \emph{number of ordered $k$-equipartitions which induce proper colourings}, i.e., where all parts form independent sets. Then our goal will be to bound the second moment of $Z_k$ in terms of $\E[Z_k]^2$. More specifically, our aim will be to show that for $n$ large enough and if $\ftn(n)=\epsilon$ for all $n$, then
\begin{equation}
\label{theaim}
 \frac{\E[Z_k^2]}{\E[Z_k]^2} \le \exp \left( \frac{n}{\log^7 n} \right).
\end{equation}
Let us briefly discuss why (\ref{theaim}) suffices to prove (\ref{upperboundeps}). By the Paley--Zygmund inequality, (\ref{theaim}) implies that
\[
\Pb \parenth{ Z_k >0} \ge  \frac{\E[ Z_k]^2}{\E[ Z_k^2]} \ge  \exp \left( -\frac{n}{\log^7 n}  \right)
\]
for large enough $n$. Therefore, for $n$ large enough, if $\ftn(n)=\epsilon$ for all $n$,
\begin{equation}
\label{final1}
 \Pb(\chi(G) \le k) \ge  \Pb \parenth{ Z_k >0}  \ge \exp \left( -\frac{n}{\log^7 n} \right).
\end{equation}
The term on the right-hand side of course tends to $0$, so it may at first seem that (\ref{final1}) is not particularly helpful in proving (\ref{upperboundeps}). However, as first noted by Frieze in \cite{frieze1990independence}, all is not lost in cases like these where we have a lower bound on a probability which tends to $0$ sufficiently slowly. Using martingale inequalities, we will see that the chromatic number of random graphs is concentrated so tightly around its mean that by adding only a few additional colours, we can boost the lower bound (\ref{final1}) to a bound which tends to $1$.

Indeed, for $G \sim \Gnp$, consider the \emph{vertex exposure martingale} (for more details see \cite{mcdiarmid1989method}, or Chapter 2.4 in \cite{janson:randomgraphs}): arbitrarily modifying the edges incident with any particular vertex of $G$ can change the value of $\chi(G)$ by at most $1$. Therefore, if we consider the martingale which is defined by the conditional expectation of $\chi(G)$ given the edges between the first $n'\le n$ vertices, it follows from the Azuma--Hoeffding or McDiarmid inequality that for all $t \ge 0$,
\begin{equation}
\label{final2}
\Pb \bigg(|\chi(G)-\E\parenth{\chi(G)}|\ge t\bigg) \le 2\exp\parenth{-\frac{t^2}{2n}}. 
\end{equation}
This implies that  $k \ge \E\parenth{\chi(G)} - \frac{n}{\log^3 n}$ for $n$ large enough and if $\ftn(n)=\epsilon$, because otherwise (\ref{final2}) with $t= \frac{n}{\log^3 n}$ would contradict (\ref{final1}). But then again by (\ref{final2}), if we let $\hat k = k+ \frac{2n}{\log^3 n}$,
\[
\Pb \parenth{ \chi(G) > \hat k} \le \Pb \parenth{ \chi(G) > \E\parenth{\chi(G)} + \frac{n}{\log^3 n}} \rightarrow 0
\]
as $n \rightarrow \infty$. Recalling that we let $\ftn(n)=\epsilon$ for all $n$, whp
\[ \chi (G) \le \hat k = k+ \frac{2n}{\log^3 n}\le\frac{n}{\gamma-x_0-2\epsilon},\]
as required.

So to prove the upper bound in Theorem \ref{maintheorem}, it remains to show (\ref{theaim}) given that $\ftn(n)=\epsilon$ for all $n$. Note that as\[
             Z_k = \sum_{\pi \text{ an ordered $k$-equipartition}} \mathbbm{1}_{\lbrace\text{$\pi$ induces a proper colouring}\rbrace},
            \]
by linearity of the expectation,
\begin{align}
 \E[Z_k^2] &= \sum _{\pi_1, \pi_2 \text{ ordered $k$-equipartitions}} \Pb \left( \text{both $\pi_1$ and $\pi_2$ induce proper colourings} \right), \label{summands}
\end{align}
where the joint  probability that both $\pi_1$ and $\pi_2$ induce proper colourings of course depends critically on how similar they are.

Classifying the amount of overlap between $\pi_1$ and $\pi_2$ and splitting up the calculation into manageable cases will be the main challenge of the proof.  In Section \ref{thesecondmomentsection}, we will first quantify the amount of overlap between two partitions, and in Sections \ref{sectiontypicalrange}--\ref{upper}, we will proceed to distinguish three different ranges of overlap and bound their respective contributions to (\ref{theaim}). Each range will be tackled through a different approach. A more detailed overview of the different ideas  for each range is given in Section \ref{outlinesecondmoment}.

\subsection*{Remark}

Like Bollob\'as' original proof of the asymptotic upper bound \cite{bollobas1988chromatic}, our proof requires the use of martingale concentration inequalities. This is necessary because for our choice of $k$, $\E[Z_k^2]/\E[Z_k]^2 \nrightarrow 1$, so the second moment method alone cannot yield the whp existence of a colouring.

However, it is possible to obtain the upper bound $\left \lceil \frac{n}{\left \lfloor \gamma \right \rfloor -1} \right \rceil= \frac{n}{\gamma-\Delta-1+o(1)}$ using only the second moment method. For this, we would need to work in $\Gnm$ with $m \approx p {n \choose 2}$ instead of $\Gnp$ and only consider colourings where all colour classes are of size exactly $\left \lfloor \gamma \right \rfloor-1$ (increasing $n$ slightly if $\left \lfloor \gamma \right \rfloor-1$ does not divide $n$).

Working with colour classes of size $\left \lfloor \gamma \right \rfloor -1$ would also simplify the calculations considerably, as much of the technical difficulty in our proof comes from colour classes of size at least $\left \lfloor \gamma \right \rfloor$ which do not exist in this setting.

\section{Preliminaries and notation}
\label{preliminariessection}
From now on, we will always assume that $n$ is large enough so that various bounds and approximations hold, even when this is not stated explicitly.

 For two functions $f=f(n)$, $g=g(n)$, we say that $f$ is asymptotically at most $g$, denoted by $f \lesssim g$, if $f(n) \le (1+o(1))g(n)$ as $n \rightarrow \infty$. Analogously, $f \gtrsim g$ means that $f(n) \ge (1+o(1))g(n)$. We write $f = O(g)$ if there are constants $C$ and $n_0$ such that $|f(n)| \le Cg(n)$ for all $n\ge n_0$. Furthermore, we say that $f = \Theta(g)$ if $f = O(g)$ and $g= O(f)$.

Recall that $\gamma= 2 \log_b n -2 \log_b \log_b n-2 \log_b 2$, and that we fix an arbitrary $\epsilon>0$ and a function $\ftn=\ftn(n) \in [0,\epsilon]$, and let 
\begin{equation} k= \left \lceil \frac{n}{\gamma -x_0-\ftn} \right \rceil      \text{ and } \, l=  \left \lfloor \frac{n}{\gamma -x_0+\epsilon} \right \rfloor. \label{defofkl} \end{equation}

Later on, we will simply pick $\ftn(n)=\epsilon$ for all $n$. As described in the outline, we are going to show that for any constant $\epsilon>0$ and if $\theta(n)=\epsilon$ for all $n$, then whp $\chi(G)\ge l$, and $\Pb \parenth{\chi(G) \le k} \ge \exp \parenth{-\frac{n}{\log^7 n}}$. Most of the calculations are valid for the more general definition of $k$, and so we will work in this more general context unless $\ftn=\epsilon$ is specified explicitly.
Let 
\[\delta= \frac{n}{k}-\left\lfloor \frac{n}{k} \right\rfloor ,\,\,\,\,\,\,\,\,\,\,\,\, k_1 = \delta k \,\,\,\,\,\, \text{ and } \,\,\,\,\,\, k_2=(1-\delta)k.\]
If $k$ does not divide $n$, then a \emph{$k$-equipartition} consists of exactly $k_1$ parts of size $\left\lceil \frac{n}{k} \right\rceil$ and exactly $k_2$ parts of size $\left\lfloor \frac{n}{k} \right\rfloor$. In an \emph{ordered $k$-equipartition}, the first $k_1$ parts are of size $\left\lceil \frac{n}{k} \right\rceil$ and the remaining $k_2$ parts are of size $\left\lfloor \frac{n}{k} \right\rfloor$.

Let $P$ denote the {total number of ordered $k$-equipartitions} of the $n$ vertices, then
\begin{equation}
\label{Pdefinition}
P = \frac{n!}{\left\lceil\frac{n}{k}\right\rceil!^{k_1}\left\lfloor\frac{n}{k}\right\rfloor!^{k_2}}.
\end{equation}
Since by Stirling's approximation, $n! = \Theta \parenth{n^{n+1/2}e^{-n}}$,
\begin{equation}
\label{pasymp}
P= k^n \exp(o(n)).
\end{equation}
Given a $k$-equipartition, there are exactly
\begin{align}
f &= k_1  {\lceil n/k \rceil \choose 2}+k_2  {\lfloor n/k \rfloor \choose 2} 
= \frac{n\left(\frac{n}{k}-1\right)}{2}+\frac{\delta(1-\delta)}{2} k \label{defoff}
\end{align}
\textit{forbidden edges} which are not present in $G$ if the partition induces a proper colouring. Therefore, the probability that a given ordered $k$-equipartition induces a proper colouring is exactly $q^f $, so
\begin{align}
\label{firstmoment}
\mu_k :=\E[Z_k] =  P q^{f}.
\end{align}

Note that 
\[\frac{n}{k} \le \gamma-x_0 - \ftn \le \gamma,\]
so $\ceilnk \le \left \lceil \gamma \right \rceil \le \left \lfloor \gamma \right \rfloor +1 = a$. As $x_0 \le \Delta$, where $\Delta= \gamma - \left \lfloor \gamma \right \rfloor$ (see Section \ref{technicallemmas}) and $\ftn\le \Delta$,
\[
\frac{n}{k} = \gamma-x_0 - \ftn+o(1) \ge \left \lfloor \gamma \right \rfloor-\epsilon+o(1)=a-1-\epsilon+o(1), 
\]
so for $n$ large enough, 
\[a-3-\epsilon \le \floornk \le \ceilnk \le a.\]

\subsection{List of key facts and relations}
\label{sectionkeyfacts}
Below is a list of some facts, bounds and approximations so that we can conveniently refer back to them later on.
\begin{enumerate}[(A)]
\item \label{keyalphaa} If $p>1-1/e^2$, then whp $\alpha(G) \in \lbrace a, a+1 \rbrace$, where $\alpha(G)$ denotes the independence number and $a = \left \lfloor \gamma \right \rfloor+1$ (see Section \ref{overviewsection}).
\item For $n$ large enough, $a-3-\epsilon \le \floornk \le \ceilnk \le a$, where $a = \left \lfloor \gamma \right \rfloor+1$. 
\label{keyceilnk}
\item In a $k$-equipartition, there are $k_1=\delta k$ parts of size $\ceilnk$ and $k_2= (1-\delta)k$ parts of size $\floornk$, where $\delta=\frac{n}{k}-\floornk$. \label{keyequipartition}
\item $\gamma \sim a \sim \frac{n}{k}  \sim \frac{n}{l} \sim 2 \log_b n = \Theta \parenth{\log n}$ \label{key3}
\item $k \sim l \sim \frac{n}{\gamma} \sim \frac{n}{a} \sim \frac{n}{2 \log_b n}  = \Theta \parenth{\frac{n}{\log n}}$ \label{key1}
\item $q^{-\gamma/2} = b^{\gamma/2} = \frac{n}{2 \log_b n}  \sim k \sim l$ \label{key2}

\item $k^{\frac{1}{n/k}} = O(1)$ and $k^{\frac{1}{\log n}} = O(1)$. \label{key7}
\item \label{key6} 
$
 f \sim n \log_b n.$
\addtocounter{enumi}{1}\item \label{keychoose} For any integer function $\varphi=\varphi(n) =o(n)$, $ {n \choose \varphi} \le \exp \parenth{ o(n)}$.
\item \label{exponentialexpectation}
$\frac{\mu_k}{k_1! k_2!} \ge b^{\ftn n /2} \exp(o(n)).$
A proof is given in the appendix.

\end{enumerate}

\subsection{On the solutions of (\ref{aste})}
\label{technicallemmas}

In this section we will explore the solutions of the  inequality (\ref{aste}) in Theorem \ref{maintheorem} and state some technical lemmas. Let
\[
\varphi(x) = \varphi_n(x) = (1-\Delta+x) \log_b(1-\Delta+x) +(1-\Delta)(\Delta-x)/2.       
       \]
Then $x_0$ is defined in Theorem \ref{maintheorem} as the smallest nonnegative solution of $\varphi(x) \le 0$. Since $\varphi(\Delta)=0$, $x_0$ is well-defined and $x_0 \in [0, \Delta]$. Note that
\begin{align*}\varphi'(x) &= \log_b (1-\Delta+x)+\frac{1}{\log b}-\frac{1-\Delta}{2}\\
\varphi''(x) &= \frac{1}{(1-\Delta+x)\log b} \ge 0.
\end{align*}
Therefore, $\varphi$ is convex and there are three different possible cases for the location of $x_0$ as shown in Figure \ref{figuresolutions}. In the first case, $\varphi(0) \le 0$ and therefore $x_0=0$. In the second and third case, $\varphi(0) > 0$, so $x_0>0$. In the second case, $x_0$ lies strictly between $0$ and $\Delta$, and in the third case, $x_0=\Delta$, which happens if and only if $\varphi'(\Delta)\le 0$, or equivalently $1-\Delta \ge \frac{2}{\log b}$. This case corresponds to the upper picture in Figure \ref{figurecases}.

The following two lemmas are needed to obtain Corollary \ref{maincorollary} from Theorem \ref{maintheorem} and will be proved in the appendix.
\begin{lemma}
\label{corollary0}
If $p \le 1-1/e^2$, then $x_0=0$.
\end{lemma}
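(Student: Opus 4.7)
The plan is to show directly that $\varphi(0)\le 0$ under the hypothesis $p\le 1-1/e^2$; since $x_0$ is by definition the smallest nonnegative solution of $\varphi(x)\le 0$, this will immediately give $x_0=0$.

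First I would translate the hypothesis into a convenient form. The assumption $p\le 1-1/e^2$ is equivalent to $q=1-p\ge 1/e^2$, i.e.\ $\log b = \log(1/q)\le 2$. Then I would compute
\[
\varphi(0)=(1-\Delta)\log_b(1-\Delta)+\frac{(1-\Delta)\Delta}{2}.
\]
The case $\Delta=0$ is immediate since $\varphi(0)=1\cdot\log_b 1+0=0$. So assume $0<\Delta<1$, which in particular makes $1-\Delta>0$. Dividing by $1-\Delta$ reduces the desired inequality $\varphi(0)\le 0$ to
\[
\log_b(1-\Delta)+\frac{\Delta}{2}\le 0,\qquad\text{equivalently}\qquad \log(1-\Delta)+\frac{\Delta\log b}{2}\le 0.
\]

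Now I would invoke the hypothesis $\log b\le 2$, which gives $\frac{\Delta\log b}{2}\le \Delta$, so it suffices to verify
\[
\log(1-\Delta)+\Delta\le 0
\]
for $\Delta\in(0,1)$. But this is the classical inequality $\log(1+x)\le x$ applied at $x=-\Delta\in(-1,0)$, which holds with strict inequality, and completes the proof.

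There is no real obstacle here: the argument is a short one-variable calculus check. The only mild care is in treating the degenerate $\Delta=0$ case separately (and in noting the sign $\log_b(1-\Delta)<0$, so the two summands in $\varphi(0)$ have opposite signs, which is why the estimate $\log b\le 2$ is exactly what is needed to make the negative term dominate). As a sanity check, one may note that at $p=1-1/e^2$ we have $\log b=2$, so the bound $\log(1-\Delta)+\frac{\Delta\log b}{2}\le 0$ degenerates into $\log(1-\Delta)+\Delta\le 0$ with equality only at $\Delta=0$, which is consistent with this value of $p$ being the precise threshold separating part (a) and part (b) of Corollary~\ref{maincorollary}.
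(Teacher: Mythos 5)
Your proof is correct and follows essentially the same route as the paper: both reduce to showing $\log_b(1-\Delta)+\Delta/2\le 0$, use $\log b\le 2$ to pass to the unweighted form, and finish with $\log(1-y)\le -y$. The only cosmetic difference is that you split off the $\Delta=0$ case and multiply through by $\log b$ rather than by $1/2$, but the argument is the same one.
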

\begin{lemma}
\label{corollarybounds}
 If $p>1-1/e^2$, then $0\le x_0 \le 1-\frac{2}{\log b}$.
\end{lemma}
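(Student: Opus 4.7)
The plan is to exhibit a point $z \in [0, 1 - \tfrac{2}{\log b}]$ at which $\varphi(z) \le 0$; by definition of $x_0$ this forces $x_0 \le 1 - \tfrac{2}{\log b}$ (and $x_0 \ge 0$ is automatic from the definition). Since $p > 1 - 1/e^2$ gives $\log b > 2$, the interval $[0, 1 - \tfrac{2}{\log b}]$ is non-degenerate. If $\Delta \le 1 - \tfrac{2}{\log b}$, then $z = \Delta$ works, because $\varphi(\Delta) = 0$. Hence from now on I may assume $\Delta > 1 - \tfrac{2}{\log b}$, and the remaining goal is to show $\varphi\bigl(1 - \tfrac{2}{\log b}\bigr) \le 0$.

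Substituting $v := \Delta - (1 - \tfrac{2}{\log b})$ (so $v \in (0, \tfrac{2}{\log b})$, $1 - \Delta = \tfrac{2}{\log b} - v$, and $1 - \Delta + (1 - \tfrac{2}{\log b}) = 1 - v$), a direct expansion gives
\[
2 \log b \cdot \varphi\bigl(1 - \tfrac{2}{\log b}\bigr) \;=\; h(v) \;:=\; 2(1-v)\log(1-v) + 2v - v^2 \log b,
\]
so it suffices to prove $h(v) \le 0$ for $v \in (0, \tfrac{2}{\log b})$.

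I would analyse $h$ via its derivatives. One checks $h(0) = 0$, $h'(0) = 0$, and $h''(v) = \tfrac{2}{1-v} - 2 \log b$, which is $\le 0$ precisely on $[0, 1 - \tfrac{1}{\log b}]$. When $\log b \ge 3$ one has $\tfrac{2}{\log b} \le 1 - \tfrac{1}{\log b}$, so $h$ is concave throughout $[0, \tfrac{2}{\log b}]$ and then $h(0) = h'(0) = 0$ forces $h \le 0$. When $\log b \in (2, 3)$, the interval splits into a concave piece $[0, 1 - \tfrac{1}{\log b}]$ (where again $h \le 0$) and a convex piece $[1 - \tfrac{1}{\log b}, \tfrac{2}{\log b}]$; both endpoints of the convex piece satisfy $h \le 0$ (the right endpoint computing to $2(1 - \tfrac{2}{\log b})\log(1 - \tfrac{2}{\log b}) \le 0$), and the maximum of a convex function on a compact interval is attained at an endpoint, so $h \le 0$ on the convex piece as well.

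The main subtlety is the split into the two subcases $\log b \ge 3$ vs.\ $\log b \in (2, 3)$ in analysing $h$, since the inflection point of $h$ may lie strictly inside $[0, \tfrac{2}{\log b}]$; once the substitution reducing $\varphi(1 - \tfrac{2}{\log b})$ to $h(v)/(2\log b)$ is carried out, both subcases are resolved by elementary convexity arguments together with the explicit value $h\bigl(\tfrac{2}{\log b}\bigr) = 2(1 - \tfrac{2}{\log b})\log(1 - \tfrac{2}{\log b})$.
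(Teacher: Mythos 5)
Your proof is correct. It follows the same overall strategy as the paper: reduce to showing $\varphi\bigl(1-\tfrac{2}{\log b}\bigr)\le 0$ when $\Delta>1-\tfrac{2}{\log b}$, substitute to obtain a one-variable function, and bound it by elementary calculus. Where the two diverge is in the handling of the one-variable function. The paper works with $\psi_1(y)=y\log y+\tfrac{\log b}{2}\bigl(y+\tfrac{2}{\log b}-1\bigr)(1-y)$ (your $h(v)$ equals $2\psi_1(1-v)$, so they are the same function up to an affine change of variables), and it avoids any case split by showing that $\psi_1$ has no interior maximum on $\bigl(1-\tfrac{2}{\log b},1\bigr)$: any critical point would have $\psi_1''\le 0$, which contradicts $\log z+\tfrac{1-z}{z}>0$ for $z\in(0,1)$. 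You instead exploit the extra structure $h(0)=h'(0)=0$ and split on whether the inflection point $1-\tfrac{1}{\log b}$ lies inside $[0,\tfrac{2}{\log b}]$, i.e.\ on whether $\log b\ge 3$. Both routes are correct and comparably elementary; the paper's ``no interior maximum'' argument is slightly more compact because it dodges the subcase on $\log b$, while yours makes transparent that the vanishing of $h$ and $h'$ at the origin is what drives the inequality on the concave piece.
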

The proofs of the following technical lemmas are straightforward analytical arguments and can also be found in the appendix.
\begin{figure}[bt]
\begin{center}
\begin{overpic}[width=0.95\textwidth]{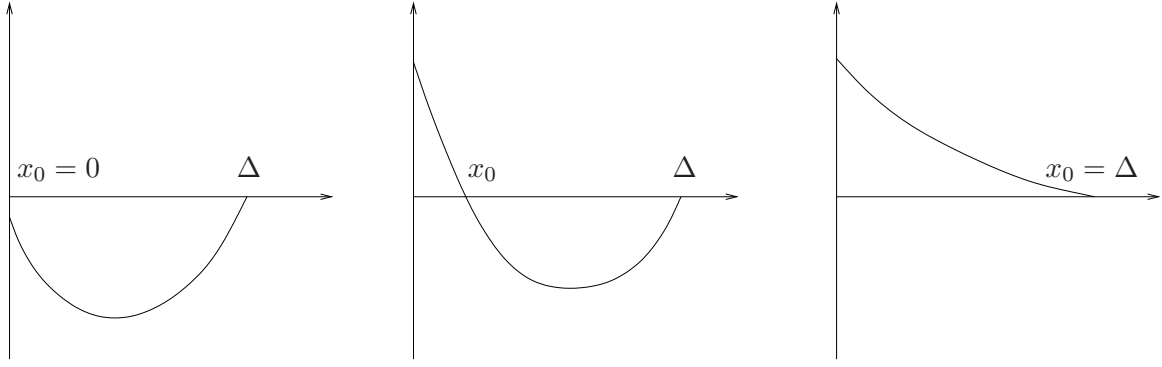}
\put(1,16){$x_0=0$}
\put(20,16){$\Delta$}
\put(40,16){$x_0$}
\put(57.7,16){$\Delta$}
\put(90,16){$x_0=\Delta$}
\end{overpic}
\vspace{15pt}
\caption{\label{figuresolutions} The three possible cases for the function $\varphi(x)$. In the first case, $\varphi(0) \le 0$, so $x_0=0$. In the second case, $x_0 \in (0, \Delta)$. In the third case, $\Delta$ is the smallest nonnegative solution of $\varphi(x)\le 0$, so $x_0=\Delta$.}
\end{center}
\end{figure}

\begin{lemma}
 \label{technicallemma1}
Suppose $p>1-1/e^2$, and fix $\epsilon'>0$. Then there is a constant $c_1=c_1(\epsilon')>0$ such that if $x_0>\epsilon'$, then 
\begin{equation*}
\varphi(x_0-\epsilon') \ge c_1. \label{constantlowerbound}
\end{equation*}
\end{lemma}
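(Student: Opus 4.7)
The plan is to exploit the convexity of $\varphi$ together with the information that $\varphi(x_0)=0$ and $\varphi'(x_0)\le 0$, using Taylor's theorem to extract a quantitative positive lower bound at $x_0-\epsilon'$.

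First I would pin down two structural facts about $\varphi$ at $x_0$. Since $x_0>\epsilon'>0$, the hypothesis $x_0>0$ forces $\varphi(0)>0$ (otherwise $0$ would already be a solution of $\varphi\le 0$, contradicting the minimality of $x_0$). By continuity, the minimality of $x_0$ then implies $\varphi(x_0)=0$. Next, I would argue $\varphi'(x_0)\le 0$ by cases: if $x_0<\Delta$, then $\varphi$ is convex with two zeros at $x_0$ and $\Delta$ (recall $\varphi(\Delta)=0$), so $\varphi\le 0$ on $[x_0,\Delta]$ and hence $\varphi'(x_0)\le 0$; if $x_0=\Delta$, this is precisely the third case discussed just above, which is characterized by $\varphi'(\Delta)\le 0$.

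The core step is a Taylor expansion around $x_0$. Since $\varphi\in C^2$, the Lagrange form of the remainder gives a $\xi\in(x_0-\epsilon',x_0)$ with
\begin{equation*}
\varphi(x_0-\epsilon')=\varphi(x_0)-\epsilon'\varphi'(x_0)+\tfrac{(\epsilon')^2}{2}\varphi''(\xi).
\end{equation*}
The first term vanishes and the second is nonnegative by the previous paragraph, so $\varphi(x_0-\epsilon')\ge \tfrac{(\epsilon')^2}{2}\varphi''(\xi)$. Now $\varphi''(\xi)=\tfrac{1}{(1-\Delta+\xi)\log b}$, and since $\xi\le x_0\le \Delta<1$ we have $1-\Delta+\xi\le 1$, giving $\varphi''(\xi)\ge 1/\log b$. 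Therefore
\begin{equation*}
\varphi(x_0-\epsilon')\ge \frac{(\epsilon')^2}{2\log b}=: c_1(\epsilon'),
\end{equation*}
which is a positive constant depending only on $\epsilon'$ and $p$ (through $b$), not on $n$.

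There is no real obstacle here beyond being careful about which quantities are $n$-dependent: $\Delta$ and $x_0$ both depend on $n$, but the bound $\varphi''\ge 1/\log b$ is uniform in $n$, so $c_1$ genuinely depends only on $\epsilon'$ (with $p$ fixed). The only subtlety worth checking is the sign of $\varphi'(x_0)$ in the borderline subcase $x_0=\Delta$, where the condition $\varphi'(\Delta)\le 0$ may degenerate to equality; but even then the quadratic term alone suffices for the argument above.
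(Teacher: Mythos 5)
Your proof is correct and takes essentially the same route as the paper: the paper observes that $\varphi(x_0)=0$, $\varphi'(x_0)\le 0$, and that $\varphi$ is strongly convex on $[0,\Delta]$ with parameter $1/\log b$, which is exactly the content of your Taylor-with-Lagrange-remainder computation and yields the same constant $c_1=(\epsilon')^2/(2\log b)$. The paper derives $\varphi'(x_0)\le 0$ a bit more directly (from $\varphi>0$ on $[0,x_0)$ and $\varphi(x_0)=0$) rather than by your case split, but that is a cosmetic difference.
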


\begin{lemma}
\label{technicallemma2}There is a constant $c_2=c_2(\epsilon) \in (0,1)$ such that if $x_0 \le \Delta-\epsilon$, then 
\[
 1-\Delta \le \frac{2c_2}{\log b}.
\]
\end{lemma}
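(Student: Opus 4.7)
The plan is to exploit convexity of $\varphi$ and then directly unfold the inequality $\varphi(\Delta-\epsilon)\le 0$.

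First I would note that the hypothesis $x_0\le \Delta-\epsilon$ forces $\Delta\ge \epsilon$, so that $\Delta-\epsilon\in[x_0,\Delta]$. By definition of $x_0$ as the smallest nonnegative solution of $\varphi(x)\le 0$, we have $\varphi(x_0)\le 0$, and $\varphi(\Delta)=0$ holds by a direct computation. Since $\varphi$ is convex (as established right before the lemma via $\varphi''\ge 0$), writing $\Delta-\epsilon$ as a convex combination of $x_0$ and $\Delta$ gives $\varphi(\Delta-\epsilon)\le 0$.

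Next I would plug $x=\Delta-\epsilon$ into the formula for $\varphi$ and simplify:
\[
\varphi(\Delta-\epsilon) \;=\; (1-\epsilon)\log_b(1-\epsilon) \;+\; \frac{(1-\Delta)\,\epsilon}{2}\;\le\; 0.
\]
Rearranging (and using $\log_b(1-\epsilon)=-\log(1/(1-\epsilon))/\log b$) this becomes
\[
1-\Delta \;\le\; \frac{2(1-\epsilon)\log\!\bigl(1/(1-\epsilon)\bigr)}{\epsilon\,\log b} \;=\; \frac{2c_2}{\log b},
\]
where I set
\[
c_2 \;=\; c_2(\epsilon) \;:=\; \frac{(1-\epsilon)\log\!\bigl(1/(1-\epsilon)\bigr)}{\epsilon}.
\]

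It then remains to verify that $c_2\in(0,1)$. Positivity is immediate from $\epsilon\in(0,1)$. For the strict upper bound, the inequality $c_2<1$ is equivalent to $\log\!\bigl(1/(1-\epsilon)\bigr)<\epsilon/(1-\epsilon)$, which, with the substitution $t=1/(1-\epsilon)>1$, reduces to the standard estimate $\log t<t-1$ valid for all $t>1$.

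There is no real obstacle here beyond keeping the signs straight in the logarithm manipulation; the whole argument is a short analytic computation that hinges on the convexity of $\varphi$ to pass from the definition of $x_0$ to the usable inequality $\varphi(\Delta-\epsilon)\le 0$.
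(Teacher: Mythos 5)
Your proposal is correct and follows essentially the same route as the paper: both use convexity of $\varphi$ together with $\varphi(x_0)\le 0$ and $\varphi(\Delta)=0$ to conclude $\varphi(\Delta-\epsilon)\le 0$, then rearrange to obtain $c_2 = -\frac{(1-\epsilon)\log(1-\epsilon)}{\epsilon}$, which is exactly the constant in the paper's proof. Your convex-combination phrasing and the paper's no-local-maximum phrasing are equivalent, and your verification that $c_2<1$ via $\log t < t-1$ matches the paper's implicit observation that this holds for $\epsilon\in(0,1)$.
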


\begin{lemma}
 \label{technicallemma3}
Fix $\epsilon'>0$. There is a constant $c_3=c_3(\epsilon, \epsilon')>0$ such that if $\epsilon'\le y\le \Delta-x_0-\epsilon$, then
\[
\varphi(\Delta-y) \le -c_3.
\]
\end{lemma}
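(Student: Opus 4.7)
My approach is to locate the unrestricted minimizer $x^*$ of the strictly convex function $\varphi$ and bound $\varphi$ at the relevant points via chord inequalities. The claim is vacuous unless $\Delta - x_0 - \epsilon \ge \epsilon'$, so we may assume $\Delta - x_0 \ge \epsilon + \epsilon'$; in particular $x_0 \le \Delta - \epsilon$, and Lemma \ref{technicallemma2} supplies a constant $c_2 = c_2(\epsilon) \in (0,1)$ with $1 - \Delta \le 2c_2/\log b$.

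Solving $\varphi'(x^*) = 0$ yields $1 - \Delta + x^* = e^{(\log b)(1-\Delta)/2 - 1}$, and a direct substitution gives the clean identity
\[
\varphi(x^*) = \frac{t - e^{t-1}}{\log b}, \qquad t := \frac{(\log b)(1-\Delta)}{2} \in [0, c_2].
\]
The map $t \mapsto t - e^{t-1}$ is strictly negative and strictly increasing on $[0,1)$ (value $-1/e$ at $t=0$, value $0$ at $t=1$), so $\varphi(x^*) \le (c_2 - e^{c_2-1})/\log b =: -c_\star$ for a positive constant $c_\star$ depending only on $\epsilon$ and $p$. Note that $t < 1$ immediately gives $x^* < \Delta$, and when $\log b > 2$ an elementary check shows $x^* \ge x_0$ as well.

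By convexity, the maximum of $\varphi$ on $[x_0 + \epsilon, \Delta - \epsilon']$ is attained at an endpoint. For each endpoint I would bound $\varphi$ using a chord through $(x^*, \varphi(x^*))$: the chord joining $(x_0, \varphi(x_0))$ to $(x^*, \varphi(x^*))$ if the endpoint lies in $[x_0, x^*]$, and the chord joining $(x^*, \varphi(x^*))$ to $(\Delta, 0)$ otherwise. A short calculation using $\varphi(x_0) \le 0$, $x^* - x_0 \le 1$, $\Delta - x^* \le 1$, $\Delta - \epsilon' - x_0 \ge \epsilon$, and $\Delta - x_0 - \epsilon \ge \epsilon'$ shows that the value of the relevant chord at either endpoint is at most $-\min(\epsilon,\epsilon')\, c_\star$; taking $c_3 := \min(\epsilon,\epsilon')\, c_\star$ then gives the claim.

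The main obstacle is a single edge case of Case I from Figure \ref{figuresolutions}, which can occur only when $\log b \le 2$ (i.e.\ $p \le 1-1/e^2$): here $x_0 = 0$ but one may have $x^* < 0$, so no chord through $x^*$ is usable. In that regime I would instead use the chord from $(0, \varphi(0))$ directly to $(\Delta, 0)$, and a short elementary estimate based on $\ln s \le -(1-s) - (1-s)^2/2$ (with $s := 1-\Delta$), together with $1 - s \ge \epsilon + \epsilon'$ and the fact that in this edge case $s$ is bounded below by $1/e$, shows that $\varphi(0)$ is itself bounded above by an explicit negative constant depending only on $\epsilon, \epsilon'$ and $p$, from which the chord argument concludes as before.
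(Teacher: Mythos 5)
Your proof is correct, and it takes a genuinely different route from the paper's. The paper first observes (as you do) that convexity pushes the maximum of $\varphi$ on $[x_0+\epsilon,\Delta-\epsilon']$ to an endpoint, but then splits into two cases depending on which endpoint is larger and, in each, deduces that $\varphi'$ has the appropriate sign at that endpoint; a strong-convexity estimate ($\varphi'' \ge 1/\log b$, applied in the form $\log(z_1+z_2)\ge \log z_1 + z_2$ for $z_1+z_2<1$) then yields a quantitatively negative value of $\varphi'$ at a point $\epsilon/2$ (resp.\ $\epsilon'/2$) inside the interval, and integrating gives $\varphi \le -\epsilon^2/(4\log b)$ (resp.\ $-\epsilon'^2/(4\log b)$) at the relevant endpoint. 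You instead solve $\varphi'(x^*)=0$ explicitly, obtain the pleasing closed form $\varphi(x^*)=\bigl(t-e^{t-1}\bigr)/\log b$ with $t=(\log b)(1-\Delta)/2$, and use Lemma~\ref{technicallemma2} to bound $t\le c_2<1$ and hence $\varphi(x^*)\le -c_\star<0$; ordinary convexity (chord inequalities through $x^*$, using $\varphi(x_0)\le0$ and $\varphi(\Delta)=0$) then delivers the endpoint bound $-\min(\epsilon,\epsilon')\,c_\star$. Both arguments rely on the same two facts that the paper's definition of $x_0$ and Lemma~\ref{technicallemma2} encode, but yours makes the location and value of the minimum explicit at the cost of handling the corner case $x^*<0$ (which, as you correctly note, forces $\log b<2$, $x_0=0$ and $1-\Delta>1/e$, and which your direct estimate on $\varphi(0)$ covers), whereas the paper's avoids the minimizer entirely by working at the endpoints and never needs that case split. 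Your claim that $x^*\ge x_0$ when $\log b>2$ is indeed elementary (for $x_0=0$ it reduces to $e^{-\Delta}\ge 1-\Delta$; for $x_0>0$ it follows since $\varphi(x_0)=\varphi(\Delta)=0$ and $\varphi$ is strictly convex), and the rest of the chord calculation checks out, so the argument is complete modulo routine verification of the listed inequalities.
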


\begin{lemma}
 \label{technicallemma4}
Fix $\epsilon'>0$. There is a constant $c_4=c_4(\epsilon, \epsilon')>0$ such that if  $\epsilon' \le \Delta-x_0-\epsilon \le y\le1$, then
\[
(1-y) \log_b (1-y)+\frac{\Delta}{2}(1-y)-\frac{x_0+\epsilon}{2} \le -c_4.
\]
\end{lemma}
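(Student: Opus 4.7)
The strategy is to reduce the interval bound to a two-point check by exploiting convexity of the function under consideration. Define
\[
h(y) := (1-y)\log_b(1-y) + \tfrac{\Delta}{2}(1-y) - \tfrac{x_0+\epsilon}{2}.
\]
Differentiating twice gives $h''(y) = \frac{1}{(1-y)\log b} > 0$ for $y \in [0,1)$, so $h$ is convex on $[0,1)$ and continuous up to $y=1$ (using $u\log u \to 0$ as $u\to 0^+$). Consequently its maximum over the closed interval $[\Delta - x_0 - \epsilon,\,1]$ is attained at one of the endpoints, and it suffices to bound $h$ at $y=1$ and at $y=\Delta-x_0-\epsilon$.

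The first endpoint is immediate: $h(1) = -\tfrac{x_0+\epsilon}{2} \le -\tfrac{\epsilon}{2}$, since $x_0\ge 0$. For the other endpoint I would connect $h$ to the function $\varphi$ from Section~\ref{technicallemmas}. Substituting $x = \Delta-y$ in the definition of $\varphi$ yields $\varphi(\Delta-y) = (1-y)\log_b(1-y) + \tfrac{(1-\Delta)y}{2}$, and a one-line rearrangement gives
\[
h(y) = \varphi(\Delta-y) + \frac{\Delta - y - x_0 - \epsilon}{2}.
\]
Setting $y = \Delta - x_0 - \epsilon$ annihilates the correction term, leaving $h(\Delta - x_0 - \epsilon) = \varphi(x_0+\epsilon)$. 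The hypothesis $\epsilon' \le \Delta - x_0 - \epsilon$ is precisely the condition that lets me apply Lemma~\ref{technicallemma3} at the value $y'=\Delta-x_0-\epsilon$ (which satisfies $\epsilon'\le y'\le\Delta-x_0-\epsilon$ trivially), delivering $\varphi(x_0+\epsilon)\le -c_3$ with the constant $c_3 = c_3(\epsilon,\epsilon')$ supplied there.

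Combining the two endpoint estimates, $h(y) \le \max\{-\tfrac{\epsilon}{2},\,-c_3\}$ throughout $[\Delta-x_0-\epsilon,\,1]$, and the lemma follows with $c_4 := \min\{\tfrac{\epsilon}{2},\,c_3\}$, which depends only on $\epsilon$ and $\epsilon'$ as required. I do not anticipate any genuine obstacle: convexity does all the work of reducing the interval estimate to a finite check, and the only mildly non-obvious step is recognising that $h$ is an affine shift of $\varphi(\Delta-\cdot)$, which is exactly what makes Lemma~\ref{technicallemma3} applicable rather than forcing a separate analytic argument. Without this observation one would have to re-establish a uniform negative bound on $h$ from scratch, likely duplicating the case analysis used to prove Lemma~\ref{technicallemma3} itself.
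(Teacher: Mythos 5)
Your proof is correct and mirrors the paper's argument almost line for line: both identify the function as convex in $y$, reduce the bound to the two endpoints, recognise the left endpoint value as $\varphi(x_0+\epsilon)$, and invoke Lemma~\ref{technicallemma3} with $y'=\Delta-x_0-\epsilon$. The only cosmetic difference is that you derive the endpoint identity via the affine relation $h(y)=\varphi(\Delta-y)+\tfrac{\Delta-y-x_0-\epsilon}{2}$, whereas the paper just states the equality directly.
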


\section{Proof of the lower bound}
\label{lowerboundsection}

We may assume that $p > 1-1/e^2$, because otherwise $x_0=0$ and the lower bound in Theorem \ref{maintheorem} is simply the known lower bound (\ref{bestknownbounds}). Recall that we let $l= \left\lfloor\frac{n}{\gamma-x_0+\epsilon} \right\rfloor$ for an arbitrary fixed $\epsilon>0$. We may assume $x_0-\epsilon \ge 0$, because we can just use the known bound (\ref{bestknownbounds}) instead for all $n$ where this is not the case. We will show that any $l$-colouring must contain a certain proportion of large colour classes of size $a= \left \lfloor \gamma \right \rfloor+1$, and then prove that the expected number of unordered partial colourings with just these large colour classes tends to $0$, which means that whp no such partial and therefore no complete $l$-colouring exists.

As $x_0 \le \Delta= \gamma-\left \lfloor \gamma \right \rfloor$ (see Section \ref{technicallemmas}) and since $x_0-\epsilon \ge 0$, 
\[\left\lfloor \gamma \right\rfloor < \gamma-x_0+\epsilon \le \gamma,\]
so $\left \lceil\gamma-x_0+\epsilon\right \rceil = \left\lfloor \gamma \right\rfloor+1 =a$. This is very close to the independence number $\alpha(G)$: by (\ref{keyalphaa}) from Section $\ref{sectionkeyfacts}$, whp $\alpha(G) =a$ or $\alpha(G)=a+1$. In particular, whp there are no independent sets of size $a+2$ in $G$.

Recall that $\alpha_0= \gamma+1+\frac{2}{\log b}$. Standard calculations show that for any $t=t(n)=O(1)$ such that $\alpha_0-t$ is an integer, the expected number of independent sets of size $\alpha_0-t$ in $G$ is $n^{t+o(1)}$ (see also 3.c) in \cite{mcdiarmid1989method}). Therefore, since $a=\left \lfloor \gamma \right \rfloor +1 = \gamma-\Delta+1= \alpha_0-\frac{2}{\log b}-\Delta$, the expected number of independent sets of size $a$ and $a+1$, respectively, can be calculated as
\begin{align}
{n \choose a} q^{a \choose 2} &= n^{\frac{2}{\log b}+\Delta+o(1)} \text{, and} \nonumber \\
{n \choose a+1} q^{a+1 \choose 2} &= n^{\frac{2}{\log b}+\Delta-1+o(1)} 
 \label{expectationfora}.
\end{align}
Note that as $\frac{2}{\log b}+\Delta-1 < \frac{2}{\log b}<1$ and by (\ref{key1}), $l= \Theta\parenth{\frac{n}{\log n}}$, it follows from Markov's inequality that whp only $o(l)$ independent sets of size $a+1$ exist in $G$.

We assume from now on that no independent sets of size $a+2$ and only $o(l)$ independent sets of size $a+1$ are present in $G$, both of which hold whp. Under this assumption, a valid $l$-colouring must contain a certain proportion of colour classes of size exactly $a$, since the average colour class size is $\frac{n}{l}\ge\gamma-x_0+\epsilon$. More specifically, given an $l$-colouring, if we let $y\in [0,1]$ be the proportion of colour classes of size $a$ in the colouring, and let $z=o(1)$ be such that there are exactly $zl$ independent sets of size $a+1$ in $G$, then adding up the number of vertices in each colour class yields
\[
n \le ayl+(a+1)zl+(a-1)(1-y-z)l.
\]
Therefore, since $n/l \ge  \gamma-x_0+\epsilon$,
\[
 \gamma-x_0+\epsilon \le y+a-1+2z.
\]
As $a= \gamma-\Delta+1$ and $z=o(1)$, it follows that
\[
y \ge \Delta-x_0+\epsilon+o(1).
\]
Hence, as $l \sim \frac{n}{2\log_b n}$  by (\ref{key1}), if a proper $l$-colouring exists and $n$ is large enough, then in particular $G$ must contain at least
\[
s := \left \lceil \frac{\parenth{ \Delta-x_0+\epsilon/2} n}{2 \log_b n} \right \rceil 
\]
disjoint independent sets of size $a$. We shall call such an (unordered) collection of $s$ disjoint independent sets of size $a$ a \emph{precolouring}, and denote by $\bar{Z}$ the number of precolourings in $G$.

Since for all $m \in \N$, $m^m/e^m \le m! \le m^{m+O(1)}/e^m$,
\begin{align*}
\E[\bar{Z}] &= \frac{1}{s!}{n \choose a}{{n-a} \choose a} \cdots {{n-(s-1)a} \choose a}q^{s  {a \choose 2}}= \frac{n!q^{s  {a \choose 2}} }{s!a!^s(n-as)!}  \le \frac{e^{s-as}n^{n+O(1)}q^{s  {a \choose 2}}}{s^sa!^s (n-as)^{n-as} }\\
&= n^{O(1)} \parenth{\frac{n^a  q ^{{a \choose 2}}}{e^{a-1}sa!}}^s \parenth{\frac{n}{n-as}}^{n-as}.
\end{align*}
By (\ref{key1}), $a\sim 2\log_b n$, so it follows from (\ref{expectationfora}) that $\frac{n^a}{a!} q ^{{a \choose 2}} \sim {n \choose a}q ^{{a \choose 2}} = n^{\frac{2}{\log b}+\Delta+o(1)}$. Furthermore, $e^{a-1}=n^{\frac{2}{\log b} +o(1)}$. Therefore, since $n^{1-o(1)}\le s \le n$,
\begin{align*}
\E[\bar{Z}] &\le n^{O(1)}\parenth{n^{\Delta+o(1)}s^{-1}}^s \parenth{1-\frac{as}{n}}^{-n\parenth{1-\frac{as}{n}}}\le e^{o(n)}n^{-s(1-\Delta)}  \parenth{1-\frac{as}{n}}^{-n\parenth{1-\frac{as}{n}}}.
\end{align*}
As $as/n \sim \Delta-x_0+\epsilon/2$, this gives
\begin{align*}
\E[\bar{Z}] &\le e^{o(n)} n^{-s(1-\Delta)} \parenth{1-\Delta+x_0-\frac{\epsilon}{2}}^{-\parenth{1-\Delta+x_0-\frac{\epsilon}{2}}n}\\
&=b^{-\parenth{\parenth{1-\Delta+x_0-\frac{\epsilon}{2}} \log_b \parenth{1-\Delta+x_0-\frac{\epsilon}{2}}+(1-\Delta)\parenth{\Delta-x_0+\frac{\epsilon}{2}}/2+o(1)}n}.
\end{align*}
Note that with the exception of the $o(1)$ term, the expression in the exponent is now simply the left-hand side of condition (\ref{aste}) in Theorem \ref{maintheorem} with $x= x_0-\epsilon/2$. As $\epsilon/2<\epsilon \le x_0$, we may apply Lemma \ref{technicallemma1} with $\epsilon'=\epsilon/2$ to conclude that
\[
 \E[\bar{Z}] \le b^{-(c_1+o(1))n}=o(1).
\]
By Markov's inequality, whp no precolouring and consequently no proper $l$-colouring exists. \qed

\section{Bounding the second moment}
\label{thesecondmomentsection}
Recall that to prove Theorem \ref{maintheorem}, it remains to show that for an arbitrary fixed $\epsilon>0$, if we let $\theta(n)= \epsilon$ for all $n$ in the definition (\ref{defofkl}) of $k$, then if $n$ is large enough,
\[
 \E[Z_k^2] / \mu_k^2 \le \exp \parenth{\frac{n}{\log^7 n}}.
\]

By (\ref{summands}), in order to bound $\E[Z_k^2]$, we need to study the joint probability that two partitions both induce proper colourings, a quantity which of course depends on how similar the two partitions are. To quantify the amount of overlap between two partitions, we define the \textit{overlap sequence} $\mathbf{r}$. For $2 \le i \le a= \left \lfloor \gamma \right \rfloor+1$, given two ordered $k$-equipartitions $\pi_1$, $\pi_2$, denote by $ r_i $ the number of pairs of parts (the first being a part in $\pi_1$ and the second being a part in $\pi_2$) which intersect in exactly $i$ vertices. Denote by
\[\bfr=(r_2, r_3, \dots, r_a)\]
the \emph{overlap sequence} of the two ordered $k$-equipartitions $\pi_1$, $\pi_2$. If the intersection of two parts contains at least two vertices, we call the intersection an \textit{overlap block}. If there is only a single vertex in the intersection of two parts, we call that vertex a \emph{singleton}. Note that since by (\ref{keyceilnk}) from Section \ref{sectionkeyfacts}, $\ceilnk \le a$ for $n$ large enough, no overlap block is larger than $a$.

Conversely, given an overlap sequence $\mathbf{r}$, denote by $P_{\mathbf{r}}$ the number of \textit{ordered pairs} of ordered $k$-equipartitions with overlap sequence $\mathbf{r}$. Let 
\[v= v(\bfr) =\sum_{i=2}^{a} i r_i\]
 be the \textit{number of vertices involved in the overlap}, and let
\[\rho = v/n \le1\]
denote the \textit{proportion} of those vertices in the graph. Furthermore, denote by
\begin{equation}
\label{defofd}
d = d(\mathbf{r}) = \sum_{i=2}^{a} r_i {i \choose 2}                                                                                                                                                                                                             \end{equation}
the number of \textit{common forbidden edges} that two ordered $k$-equipartitions  $\pi_1$, $\pi_2$ with overlap sequence $\mathbf{r}$ share. Since the number of forbidden edges in one partition is exactly $f$, where $f$ was defined in (\ref{defoff}), both  $\pi_1$ and  $\pi_2$ induce proper colourings if and only if none of the exactly $2f-d$ forbidden edges are present. Therefore, from (\ref{summands}) and (\ref{firstmoment}),
\begin{align*}
\E[Z_k^2] &= \sum _{\mathbf{r}} P_{\mathbf{r}}q^{2f-d} = \mu_k^2 \sum _{\mathbf{r}} \frac{P_{\mathbf{r}}}{P^2}  b^d.
\end{align*}
Let
\begin{align}
 Q_\bfr &=  \frac{P_\bfr}{P^2}, \label{defofq} 
\end{align}
then our goal is to show that for $n$ large enough, if $\ftn(n)=\epsilon$ for all $n$,
\begin{equation}
\label{thesum}
\frac{\E[Z_k^2] }{\mu_k^2} = \sum_\bfr Q_\bfr  b^{d} \le \exp \parenth{\frac{n}{\log^7 n}}.
\end{equation}
Since the summands in (\ref{thesum}) vary considerably for different types of overlap sequences $\bfr$, we split up our calculations into three parts in Sections \ref{sectiontypicalrange} -- \ref{upper}. The behaviour of the summands is rather different in each case, and so different methods will be required to bound them.

\subsection{Outline}
\label{outlinesecondmoment}
\subsubsection*{Typical case}
In Section \ref{sectiontypicalrange}, we first discuss the typical form of overlap between pairs of partitions. If a partition is chosen uniformly at random from all possible ordered $k$-equipartitions, then the probability that two given vertices are in the same part is roughly $\frac{1}{k}$. Consequently, if two ordered $k$-equipartitions are sampled independently and uniformly at random, then the expected number $d$ of forbidden edges they have in common, i.e., pairs of vertices which are in the same part in both partitions, is of order $\frac{n^2}{k^2}=O(\log^2 n)$. In particular, we do not expect the number $v$ of vertices involved in the overlap to be much larger than $2d=O(\log^2 n)$.

Furthermore, the expected number of \emph{triangles} which the two partitions have in common, i.e., triples of vertices that are in the same part in both partitions, is of order $\frac{n^3}{k^4}=O\parenth{\frac{\log^4 n}{n}}=o(1)$. Therefore, typically two partitions have no triangles or larger cliques in common, and overlap in about $O(\log^2 n)$ disjoint pairs of vertices.

In fact, we will cover a much larger range of overlap sequences $\bfr$ in Section \ref{sectiontypicalrange}, namely those $\bfr$ where at most a constant fraction of all vertices are involved in the overlap, i.e., where $v=v(\bfr)\le cn$ for a constant $c$ which will be defined in (\ref{defofc}).

To bound the number of such pairs of partitions, we will count the number of corresponding \emph{overlap matrices}. The overlap matrix between two partitions $\pi_1$ and $\pi_2$ is defined as the matrix $\mc{M}=\left( M_{xy} \right)$, where $M_{xy}$ denotes the number of vertices that are in part number $x$ in $\pi_1$ and in part number $y$ in $\pi_2$. If $\pi_1$ and $\pi_2$ overlap according to  a given overlap sequence $\bfr$, then the entries of $\mc{M}$ are exactly $r_i$ instances of the number $i$ for all $2 \le i \le \ceilnk$, as well as $n-v$ instances of the number $1$, with the remaining entries $0$. As $\pi_1$ and $\pi_2$ are ordered $k$-equipartitions, all rows and columns of $\mc{M}$ sum to $\ceilnk$ or $\floornk$.

Since there are typically few pairs and very few triangles or larger cliques in the overlap, one crucial idea is that we can count the number of overlap matrices by first placing any entries $2, 3, \dots, \ceilnk$ in the matrix separately, and then treating the rest of the matrix as a $0-1$ matrix with given row and column sums close to $\frac{n}{k}$. An important tool is Theorem \ref{mckaymatrices}, due to McKay, which gives an estimate for the number of $0-1$ matrices with prescribed row and column sums.

After some fairly accurate calculations, we will see that the contribution from each $\bfr$ in this case is bounded by an expression of the form $\prod_{i=2}^a \frac{T_i^{r_i}}{r_i!}$, where the terms $T_i$ still depend on $\rho(\bfr)=v/n$. We will then show that if $\rho\le c$, the terms $T_i$ are small enough so that the overall contribution to (\ref{thesum}) is bounded by $\exp \parenth{\frac{n}{\log^8 n}}$. The bound for the term $T_\ceilnk$ will require condition (\ref{aste}) from Theorem \ref{maintheorem} to hold.

Let us remark that if we work with $\mathcal{G}(n,m)$ instead of $\Gnp$ and conduct a much more detailed analysis, it is possible to show that if $p<1-1/e$,
 the contribution from this range of $\bfr$ is bounded by a constant. The bulk of this contribution comes from overlap sequences of the form $\bfr=(r_2, 0, 0,\dots, 0)$ with $r_2=O(\log^2 n)$. However, only the coarser bound is needed for our result.

\subsubsection*{Many small overlap blocks}

An intermediate degree of overlap is examined in Section \ref{middle}, where at least a constant fraction $cn$ of vertices are involved in the overlap between the two partitions, but there are either still many small overlap blocks, or many vertices not involved in the overlap at all. More specifically, for an arbitrary constant $c'>0$, we will consider all  $\bfr$ with $\rho=v/n>c$ and
\[
\sum_{2 \le i \le 0.6\gamma} i r_i \ge {c'} n  \,\text{ or }\, \rho \le 1-{c'},
\]
i.e., those $\bfr$ where there are either at least $c'n$ vertices not in the overlap, or at least $c'n$ vertices in overlap blocks of size at most $0.6 \gamma$.

Let us assume for the moment that $\frac{n}{k}$ is an integer in order to simplify notation. It will be useful to define a simple density parameter $\beta$ which measures how close the overlap of two ordered $k$-equipartitions is to consisting entirely of complete parts of size $\frac{n}{k}$ (with the remaining $n-v$ vertices being singletons not involved in the overlap).

Any overlap block contains at most $\frac{n}{k}$ vertices. Therefore, if we view the overlap blocks of two $k$-equipartitions as cliques making up a graph, then each vertex has degree at most $\frac{n}{k}-1$ within this overlap graph. Hence, given the number $v$ of vertices involved in the overlap and the number $d$ of common forbidden edges, we know that $2d \le (\frac{n}{k}-1)v$, and we let
\[
 \beta = \frac{2d}{(\frac{n}{k}-1)v} \le 1.
\]
If $\beta$ is close to $1$, then the overlap consists almost entirely of very large overlap blocks which are almost entire parts.

In Section \ref{contribution12}, we will first consider the case where $\beta$ is not too close to $1$ (so there are enough small overlap blocks). Thereafter, in Section \ref{contribution3}, we will study the case where $\beta$ is close to $1$ (so the overlap of the pairs of partitions consists almost exclusively of very large overlap blocks), but there are still many vertices which are not involved in the overlap at all, i.e., $n-v$ is large enough.

In both cases, we will bound the number $P_\bfr$ of pairs of ordered $k$-equipartitions with overlap sequence $\bfr$ according to the same strategy. We fix the first ordered $k$-partition $\pi_1$ arbitrarily, and then we generate the second partition in the following way. 

We first subdivide the parts of $\pi_1$ into overlap parameter blocks and singletons according to $\bfr$. In the first case, a fairly slack bound on the number of ways to do this will suffice (Lemma \ref{choppinguplemma1}). In the second case, we need to be more careful, and so we will show that this can be done in subexponentially many ways (Lemma \ref{choppinguplemma3}).

Thereafter, we sort the overlap blocks and singletons into $k$ parts in order to form the new partition $\pi_2$. In the first case the number of ways to do this is simply bounded by $k^{R+n-v}$, where $R$ denotes the number of overlap blocks. Bounding $R$ in terms of $\beta$ in (\ref{eqforr1}) and (\ref{eqforr2}), we will see that the overall contribution from the first case to the sum (\ref{thesum}) is $o(1)$.

In the second case we again need a better bound for the number of ways to sort the overlap blocks into the $k$ parts in order to form $\pi_2$. Note that in this case, almost the entire overlap consists of very large overlap blocks. If we sort these large overlap blocks into the $k$ parts first, then they occupy their assigned parts almost completely. As there are $v=\rho n$ vertices in the overlap, this means that roughly $\rho k$ of the $k$ parts are now filled or almost filled. The remaining smaller overlap blocks and singletons, of which there are roughly $(1-\rho)n$, do not have $k$ parts to pick from. Instead, their choice is limited to about $(1-\rho)k$ parts. Therefore, in this case we get an additional factor of roughly $(1-\rho)^{(1-\rho)n}$.

As almost everything else turns out to be subexponential in the second case, this would be the end of the story if $\frac{n}{k}$ were indeed an integer: as long as $c<\rho<1-c'$, the overall contribution to the sum (\ref{thesum}) would decrease exponentially, and in particular it would be $o(1)$.

However, the fact that $\frac{n}{k}$ is not in general an integer is not purely a notational inconvenience. When we do distinguish between parts of size $\ceilnk$ and $\floornk$ (or rather, for technical reasons, between parts of size $a=\left\lfloor \gamma \right \rfloor+1$ and of size at most $a-1$) then there is an additional factor of size about $b^{v_1(1-\Delta)/2}$, where $v_1$ denotes the number of vertices in the overlap which are in parts of size $a$ within the first partition $\pi_1$.

As $v_1 \le v = \rho n$, this means that overall, in equation (\ref{yahoo}), we arrive at an expression which is roughly 
\[(1-\rho)^{(1-\rho)n}b^{\rho (1-\Delta)n/2}= b^{n ((1-\rho)\log_b(1-\rho)+\rho(1-\Delta)/2)}.\]
Noting in (\ref{equationnk}) that the proportion of vertices in sets of size $a$ in a $k$-equipartition is roughly $\Delta-x_0-\epsilon$, it is now not very hard, but slightly tedious, to compare this last exponent to condition (\ref{aste}) in Theorem \ref{maintheorem} in order to show that this expression is exponentially decreasing in $n$. We will need to consider several cases, and we will also use the technical Lemmas \ref{technicallemma3} and \ref{technicallemma4} from Section \ref{technicallemmas}.

Overall, we will show that the contribution from the second case to the sum (\ref{thesum}) is~$o(1)$.

\subsubsection*{High overlap}

Finally, in Section \ref{upper} we will study those $\bfr$ where the corresponding pairs of partitions are very similar to each other. In this range, most of the overlap consists of almost entire parts which are merely permuted, with a few exceptional small overlap blocks and singletons.

We will show that the contribution to (\ref{thesum}) from this range of overlap is $O(\frac{k_1! k_2!}{\mu_k} {k \choose k_1})$. Since we are sufficiently far above the first moment threshold for the number of colourings, this is $o(1)$, and summing up the contributions from each of the three cases yields (\ref{thesum}) and thereby concludes the proof of Theorem \ref{maintheorem}.

\begin{figure}
  \begin{center}
    \begin{overpic}[width=0.5\textwidth]{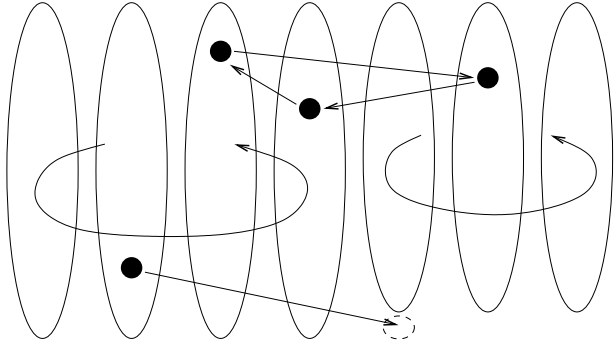}
         \end{overpic}
  \end{center}
  \caption{\label{figurepart3}In the high overlap case, the second partition is largely generated by permuting the exceptional vertices and then permuting the parts of size $\ceilnk$ (shown on the left) and of size $\floornk$ (shown on the right). Exceptional vertices may also jump to smaller parts of size $\floornk$.}
\end{figure}

It is helpful to first consider the \emph{extreme case} of those pairs of partitions $\pi_1$, $\pi_2$ which are simply permutations of each other: as there are $k_1$ parts of size $\ceilnk$ and $k_2$ parts of size $\floornk$, there are exactly $P k_1!k_2!$ such (ordered) pairs of partitions, where $P$ is defined in (\ref{Pdefinition}) as the total number of $k$-equipartitions. The number of overlapping edges is maximal, so $d=f$. Therefore, from (\ref{firstmoment}), the overall contribution to (\ref{thesum}) is exactly
\[
 \frac{Pk_1!k_2!}{P^2} b^f = \frac{k_1!k_2!}{Pq^f} = \frac{k_1!k_2!}{\mu_k}.
\]
More generally, we will consider pairs of partitions which are largely just permutations of each other, but where there are also a few \emph{exceptional vertices} which are essentially permuted amongst themselves first, as shown in Figure \ref{figurepart3}. As the part sizes may vary by $1$, however, the number of `available slots' for exceptional vertices in each of the $k$ parts may vary by $1$. We will bound these variations with the factor  ${k \choose k_1}$.

From Section \ref{middle}, we can assume that there are at most $2c'n$ exceptional vertices, where we can make the constant $c'$ as small as we like. We will distinguish three different types of exceptional vertices. Starting with the first partition $\pi_1$, we will first select the exceptional vertices of each type, and bound the number of choices in Lemma \ref{Part3Lemma1}. Then we generate $\pi_2$ and bound the number of ways to do this in Lemma \ref{Part3Lemma2}. Finally, we examine how much each exceptional vertex subtracts from the maximum number $f$ of shared forbidden edges between $\pi_1$ and $\pi_2$ in Lemma \ref{Part3Lemma3}. Summing over the number of exceptional vertices, we will see that the overall contribution to (\ref{thesum}) is of order $O(\frac{k_1! k_2!}{\mu_k}  {k \choose k_1})$ if $c'$ is chosen small enough.

\subsection{Typical overlap range}
\label{sectiontypicalrange}
We will first consider all those overlap sequences $\bfr$ where the proportion $\rho=v/n$ of the vertices which are involved in the overlap is at most 
\begin{equation}
 c= \frac{1-c_2}{2} \in \left(0,\frac{1}{2}\right), \label{defofc}
\end{equation}
where $c_2$ is the constant from Lemma \ref{technicallemma2}. So let
\[
\mc{R}_1 = \left\lbrace \bfr \mid \rho =\rho(\bfr) \le c \right\rbrace.
\]
The vast majority of all pairs of partitions overlap in a parameter sequence $\bfr \in \mc{R}_1$, and this is also where the bulk of the sum (\ref{thesum}) comes from. We will show that if $\ftn(n)=\epsilon$ for all $n$, then the contribution of the overlap sequences $\bfr\in \mc{R}_1$ to (\ref{thesum}) is at most $\exp \parenth{\frac{n}{\log^8 n}}$. To do this, we will find a bound for the contribution from each $\bfr$ of the form $\prod_i\frac{T_i^{r_i}}{r_i!}$, and then bound the terms $T_i$. We state and prove the following lemma for general functions $\ftn(n)\in[0,\epsilon]$ and for general $\bfr$ and make some simplifications for the required case $\ftn(n)= \epsilon$ and $\bfr \in \mc{R}_1$ afterwards.

\begin{lemma}
 \label{quotablelemma1}
Fix $\epsilon>0$, let $\ftn=\ftn(n)\in[0,\epsilon]$ be an arbitrary function and $k= \left \lceil \frac{n}{\gamma-x_0-\ftn}\right \rceil$. 
Recall the definitions (\ref{defofd}) and (\ref{defofq}) of $d$ and $Q_{\bfr}$. Then
\[
Q_{\bfr} b^d \lesssim 
 \prod_{i=2}^a \left(\frac{1}{r_i!} \left(\frac{e^{\rho i}b^{i \choose 2} k^{2} \ceilnk!^{2}}{ n^i i!\left( \ceilnk-i\right)!^{2}}\right)^{r_i} \right)  \exp \left( -\frac{1}{2} \left(\frac{n-v}{k} -1 \right)^2 \right).
\]
 \end{lemma}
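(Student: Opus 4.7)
The plan is to reduce $Q_\bfr b^d$ to a slot-bijection count $C_\bfr b^d/n!$ via a clean bookkeeping identity, and then upper-bound $C_\bfr$ by treating heavy cells (entries $M_{xy}\ge 2$) separately from the remaining $0$-$1$ sub-matching, to which McKay's formula applies.

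First, we attach to each of the $k$ parts of an ordered $k$-equipartition a list of $\ceilnk$ or $\floornk$ labelled slots, yielding $n$ ``row slots'' (from $\pi_1$) and $n$ ``column slots'' (from $\pi_2$). Each ordered pair $(\pi_1,\pi_2)$ with overlap sequence $\bfr$ is encoded by (i) an overlap matrix $\mathcal{M}$ whose entries consist of $r_i$ copies of $i$ (for $i=2,\ldots,a$), $n-v$ copies of $1$, and the rest $0$, and (ii) a vertex assignment distributing the $n$ labelled vertices into the cells according to $\mathcal{M}$. Counting vertex assignments gives $P_\bfr = N_\bfr\, n!/\prod_i (i!)^{r_i}$, where $N_\bfr$ is the number of overlap matrices with sequence $\bfr$. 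Counting slot-bijections inducing a given matrix gives $C_\bfr = N_\bfr \ceilnk!^{2k_1}\floornk!^{2k_2}/\prod_i (i!)^{r_i}$, where $C_\bfr$ is the total number of slot-bijections whose induced matrix has overlap sequence $\bfr$. Since $P^2 = (n!)^2/(\ceilnk!^{2k_1}\floornk!^{2k_2})$, these combine to yield the clean identity $Q_\bfr = C_\bfr/n!$.

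Second, we upper-bound $C_\bfr$ by placing heavy cells first and then matching the residual slots. For each of the $r_i$ cells of value $i\ge 2$, we pick a position in the $k\times k$ matrix (at most $k^2$ ways), choose $i$ of the $\ceilnk$ row slots in its row (at most $\ceilnk!/(\ceilnk-i)!$ ways), choose $i$ column slots (similarly), and match them ($i!$ ways); dividing by $r_i!$ for indistinguishable equal-valued cells, the heavy-placement factor is at most $\prod_{i=2}^a (1/r_i!)\parenth{k^2\ceilnk!^2/(i!(\ceilnk-i)!^2)}^{r_i}$. For the remaining $n-v$ slots per side, Theorem \ref{mckaymatrices} estimates the number of bijections whose induced sub-matrix is $0$-$1$ as $(n-v)!\cdot e^{-\lambda}\cdot (1+o(1))$: the leading $(n-v)!$ is the unconstrained bijection count, and $\lambda$ is the Poisson-approximation expectation of ``repeated pairs'', computable as $\sum_{x,y}\binom{(n-v)/k}{2}^2/\binom{n-v}{2}\sim \frac{1}{2}((n-v)/k-1)^2$.

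Third, we combine these estimates and divide by $n!$. We use $(n-v)!/n! = 1/(n)_v\le e^{v\rho}/n^v = \prod_i (e^{\rho i}/n^i)^{r_i}$, which follows from $\sum_{j=0}^{v-1}\log(1-j/n) \ge -v\rho$; expanding the logarithm, this reduces to $\sum_{m\ge 1}\rho^m/(m(m+1))\le\rho$ for $\rho\in[0,1]$, in turn equivalent to the standard inequality $\log(1-\rho)\le -\rho$. Multiplying through by $b^d=\prod_i b^{r_i\binom{i}{2}}$ and collecting factors then produces exactly the claimed product form. The main obstacle will be applying McKay's theorem rigorously when the post-heavy-cell residual margins $\ceilnk-s_x$ vary across rows and when $v$ is close to $n$, so that the residual matching becomes very sparse: one must verify that the error in Theorem \ref{mckaymatrices} is absorbed by the $\lesssim$ symbol throughout this range. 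A minor subtlety is that the heavy-placement count overcounts configurations in which two same-row or same-column heavy cells claim overlapping slots, but since this only inflates the bound, it does no harm.
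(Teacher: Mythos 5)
Your proposal is correct and is in substance the same argument as the paper's: reduce $Q_\bfr$ to a count of overlap matrices, split into heavy-cell placement plus a residual $0$--$1$ count via McKay, and absorb $(n-v)!/n!$ into the product term by term. The ``slot-bijection'' packaging is a clean reorganization worth noting: the identity $Q_\bfr = C_\bfr/n!$ is algebraically the same as the paper's $P_\bfr = \frac{n!}{\prod_i i!^{r_i}}M_\bfr$ together with $P^2 = (n!)^2/(\ceilnk!^{2k_1}\floornk!^{2k_2})$, but your version folds the $\prod_x s_x!\,\prod_y t_y!$ denominator from McKay's formula into the heavy-cell slot count (each cell of value $i$ picks $\binom{\ceilnk}{i}^2 i!$ slot-matchings). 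This makes the step where the paper separately lower-bounds $\prod_x s_x!$ (``the product of removed factors is maximal when $s_x=\ceilnk-i$'') automatic and more transparent, since bounding a slot count from above is visibly monotone in the row size. Your $(n-v)!/n!\le e^{\rho v}/n^v$ derivation via the logarithmic series (reducing to $\sum_m \rho^m/(m(m+1))\le\rho$, i.e.\ $\log(1-\rho)\le-\rho$) is a nice Stirling-free alternative to the paper's route.

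The one place where you should be careful: your claimed asymptotic $\lambda = S_2T_2/(2S^2)\sim\frac12\big((n-v)/k-1\big)^2$ is computed as if the residual margins were all equal to $(n-v)/k$, which they are not once the heavy cells have been placed (you flag the varying margins as an obstacle, but attribute it to the \emph{error term} in Theorem~\ref{mckaymatrices} rather than to the \emph{main exponent}). What actually saves the day is convexity: the paper applies Jensen's inequality to $x\mapsto x(x-1)$ to get $S_2=\sum_x s_x(s_x-1)\ge (n-v)\big((n-v)/k - 1\big)$ and similarly for $T_2$, so $\lambda\ge\frac12\big((n-v)/k-1\big)^2$ — an inequality in the direction you need for the upper bound. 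Without this step the $\exp\big(-\frac12((n-v)/k-1)^2\big)$ factor in the conclusion is not justified, so the Jensen argument should be added to your plan.
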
 
\begin{proof}

Any pair of ordered $k$-equipartitions $\pi_1$ and $\pi_2$ defines a $k\times k$ \textit{overlap matrix} $\mc{M}=\left( M_{xy} \right)$, where $M_{xy}$ is the number of vertices that are in part $x$ in $\pi_1$ and in part $y$ in $\pi_2$. Since $\pi_1$ and $\pi_2$ are ordered $k$-equipartitions, the first $k_1$ rows and columns of $\mc{M}$ sum to $\ceilnk$ and the remaining $k_2$ rows and columns sum to $\floornk$. If $\pi_1$ and $\pi_2$ overlap according to the overlap sequence $\bfr$, this means that for every $2\le i \le a$, exactly $r_i$ of the entries of the overlap matrix are $i$, exactly $n-v = n-v(\bfr)$ entries are $1$, and the remaining entries are $0$.

Conversely, given such a matrix $\mc{M}$, the number of pairs $(\pi_1, \pi_2)$ with overlap matrix equal to $\mc{M}$ is given by the multinomial coefficient 
\begin{equation*}\frac{n!}{\prod_{i=2}^{a} i!^{r_i}}.
\end{equation*}
This is because, given the matrix $\mc{M}$ and $n$ vertices, we must pick $r_i$ sets of $i$ vertices that correspond to the $i$-entries in $\mc{M}$ for each $i$, as well as $n-v$ single vertices for each of the $1$-entries, and then this exactly defines the two ordered $k$-equipartitions. Given $\bfr$, denote by $M_{\bfr}$ the number of corresponding matrices and observe that
\begin{equation}
\label{relationpm}
 P_{\bfr} = \frac{n!}{\prod_{i=2}^{a} i!^{r_i}}  M_{\bfr}.
\end{equation}
Thus to bound $Q_\bfr= P_\bfr / P^2$, it suffices to bound $M_\bfr$ and we do so in following way. Take an empty $k \times k$-matrix, and write the number $2$ in $r_2$ empty slots, write the number $3$ in $r_3$ empty slots, and so on. There are at most
\begin{align}
{k^2 \choose r_2} {k^2 \choose r_3} \dots  {k^2 \choose r_a} &\le  \frac{k^{2\sum_{i=2}^a r_i}}{\prod_{i=2}^a r_i!}\label{placeentries}
\end{align}
ways to do this. The rest of the matrix has entries $0$ and $1$, and the number of ways to fill in these entries is bounded by the total number of $k \times k$ 0-1 matrices where the row and column sums are given by $\ceilnk$ or $\floornk$ minus the values of the entries that are already written in these rows and columns. Note that we are of course overcounting $M_{\bfr}$, since not all placements of the numbers $2$, $3$, \dots, are valid, and not all 0-1 matrices are possible afterwards, but this will be insignificant.

To estimate the number of 0-1 matrices with prescribed row and column sums, we use the following result of McKay (\cite{mckay1984asymptotics}, see also \cite{greenhill2006asymptotic}).
\begin{theorem}
\label{mckaymatrices}
 Let $N(\mathbf{s}, \mathbf{t})$ be the number of $m \times n$ 0-1 matrices with row sums $\mathbf{s}=(s_1, \dots, s_m)$ and column sums $\mathbf{t}=(t_1, \dots, t_n)$. Let $S= \sum_{x=1}^m s_x$, $s = \max_x s_x$, $t = \max_y t_y$, $S_2 = \sum_{x=1}^m s_x (s_x -1)$ and $T_2 = \sum_{y=1}^n t_y (t_y -1)$.

If $S \rightarrow \infty$ and $1 \le \max\{s,t\}^2 < cS$ for some constant $c < \frac{1}{6}$, then
\[
 N(\mathbf{s}, \mathbf{t}) = \frac{S!}{\prod_{x=1}^m s_x! \, \prod_{y=1}^n t_y!} \exp \left( - \frac{S_2 T_2}{2S^2} + O \left( \frac{\max\{s,t\}^4}{S}\right) \right).
\]
\end{theorem}

Having written the numbers $2, \dots, \ceilnk$ in the matrix, the remaining 0-1 entries must be placed so that the rows sum to $\mathbf{s}=(s_1, \dots, s_k)$, and the columns sum to $\mathbf{t}=(t_1, \dots, t_k)$, where $s_x, t_y \le \ceilnk$ for all $x,y$. The exact values for $s_x$ and $t_y$ depend on the placement of the numbers $2, \dots, \ceilnk$. In the terminology of Theorem~\ref{mckaymatrices}, we have $S= n-v \ge (1-c)n \rightarrow \infty$ and $1 \le \max\{s,t\}^2 \le \ceilnk^2=O(\log^2 n) = o(S)$, so we can apply Theorem \ref{mckaymatrices}:
\begin{align}
N(\mathbf{s}, \mathbf{t}) &=  \frac{(n-v)!}{\prod_{x=1}^k s_x! \, \prod_{y=1}^k t_y!} \exp \left( - \frac{\sum_{x=1}^k s_x (s_x -1)\sum_{y=1}^k t_y (t_y -1)}{2(n-v)^2} + O \parenth{\frac{\log^4 n}{n}}\right).\label{nstfirst1}
\end{align}

As $\sum_{x=1}^k s_x = n-v$, applying Jensen's inequality with the convex function $x(x-1)$ gives
\begin{align}
\sum_{x=1}^k s_x (s_x -1) & \ge k  \left( \frac{n-v}{k} \right) \left(\frac{n-v}{k} -1 \right)= \left( n-v \right) \left(\frac{n-v}{k} -1 \right), \label{intermed}
\end{align}
and the corresponding inequality also holds for $\sum_{y=1}^k t_y (t_y -1)$.

The sequence $s_1, \dots, s_k$ can be obtained from the sequence $\ceilnk$, $\ceilnk$, \dots, $\floornk$ ($k_1$ times $\ceilnk$ and $k_2$ times $\floornk$) by successively subtracting the number $2$ from $r_2$ members of the sequence, the number $3$ from $r_3$ members of the sequence, and so on.

The product $\prod_{x=1}^k s_x!$ can then be obtained from the product $\ceilnk!^{k_1} \floornk!^{k_2}$ by removing the corresponding $v= \sum_{i=2}^a ir_i$ factors of the factorials. If $\sum_{i=2}^a r_i \le k_1$, the product of these factors is maximal if $s_x = \ceilnk -i$ for exactly $r_i$ values $x$ for all $i\ge 2$. For all remaining values $x$, $s_x = \ceilnk$ or $s_x = \floornk$. Therefore, in this case
\begin{align*}
 \prod_{x=2}^k s_x! \ge& \ceilnk!^{k_1} \floornk!^{k_2} \cdot \prod_{i=2}^a \frac{\left( \ceilnk-i\right)!^{r_i}}{\ceilnk!^{r_i}}. 
\end{align*}
Note that the above remains valid if $\sum_{i=2}^a r_i > k_1$ --- it is just not tight in this case. If $\left \lceil \frac{n}{k} \right \rceil <a$, there are no parts of size $i$ in the partition for $\ceilnk <i \le a$, so $r_i=0$ and the above is still well-defined as there are no terms for such $i$. The corresponding inequality of course also holds for $ \prod_{y=2}^k t_y!$. Together with (\ref{nstfirst1}) and (\ref{intermed}), this gives
\begin{align*}
 N(\mathbf{s}, \mathbf{t}) \lesssim&  \frac{(n-v)!}{\ceilnk!^{2k_1} \floornk!^{2k_2}} 
\prod_{i=2}^a \frac{\ceilnk!^{2r_i}}{\left( \ceilnk-i\right)!^{2r_i}} \exp \left( -\frac{1}{2} \left(\frac{n-v}{k} -1 \right)^2 \right). 
\end{align*}
Using (\ref{relationpm}) and (\ref{placeentries}), we have
\begin{align}
 P_\bfr \lesssim  \frac{(n-v)!n!}{\ceilnk!^{2k_1} \floornk!^{2k_2}}\prod_{i=2}^a \frac{k^{2r_i}\ceilnk!^{2r_i}}{i!^{r_i} r_i!\left( \ceilnk-i\right)!^{2r_i}}\exp \left( -\frac{1}{2} \left(\frac{n-v}{k} -1 \right)^2 \right). \label{lala1}
\end{align}
Note that by Stirling's formula $n! \sim \sqrt{2\pi n} n^n/e^n$, and using $1+x \le e^x$,
\begin{align*}
  \frac{(n-v)!}{n!} &\lesssim \frac{(n-v)^{n-v}e^v}{n^{n}} = n^{-v} \left( 1-\frac{v}{n}\right)^{n-v} e^v \le n^{-v} e^{v^2/n} = n^{-v} e^{\rho v}.
\end{align*}
Together with (\ref{defofq}), (\ref{lala1}) and (\ref{Pdefinition}), and as $v= \sum_{i=2}^{a}i r_i$, this gives
\begin{align*}
Q_{\bfr} =\frac{P_{\bfr}}{P^2}  \lesssim&
\prod_{i=2}^a \left( \frac{1}{r_i!}\left(\frac{e^{\rho i}k^{2}\ceilnk!^{2}}{n^i i!\left( \ceilnk-i\right)!^{2}}\right)^{r_i}\right)\exp \left( -\frac{1}{2} \left(\frac{n-v}{k} -1 \right)^2 \right).
\end{align*}
Recalling that $d = \sum_{i=2}^a {i \choose 2} r_i$, and that by (\ref{keyceilnk}) from Section \ref{sectionkeyfacts}, $\ceilnk \le a$,
\begin{align*}
Q_{\bfr} b^d \lesssim 
 \prod_{i=2}^a \left(\frac{1}{r_i!} \left(\frac{e^{\rho i}b^{i \choose 2} k^{2} \ceilnk!^{2}}{ n^i i!\left( \ceilnk-i\right)!^{2}}\right)^{r_i} \right) \exp \left( -\frac{1}{2} \left(\frac{n-v}{k} -1 \right)^2 \right).
\end{align*}
\end{proof}
For the remainder of Section \ref{sectiontypicalrange}, we will only consider the case $\ftn(n)=\epsilon$ for all $n$ and $\bfr \in \mc{R}_1$. We first make some simplifications. Note that
\[
 \exp \left( -\frac{1}{2} \left(\frac{n-v}{k} -1 \right)^2 \right) \le 1.
\]
Furthermore, $\frac{\ceilnk!}{\parenth{\ceilnk-i}!} \le \frac{a!}{\parenth{a-i}!} $ for all $i$, so Lemma \ref{quotablelemma1} gives 
\[
 Q_{\bfr} b^d \le \prod_{i=2}^a \left(\frac{1}{r_i!} \left(\frac{e^{\rho i}b^{i \choose 2} k^{2} a!^{2}}{ n^i i!\left( a-i\right)!^{2}}\right)^{r_i} \right).
\]
Therefore, letting
\[
 T_i :=\frac{e^{\rho i}b^{i \choose 2} k^{2}a!^{2}}{ n^i i!\left( a-i\right)!^{2}},
\]
we have
\begin{align*}
Q_{\bfr} b^d \lesssim  \prod_{i=2}^a \frac{T_i^{r_i}}{r_i!}. 
\end{align*}
By (\ref{keyceilnk}), $\ceilnk \le a$. If $\ceilnk < a$, then there are no parts of size $i$ for $\ceilnk <i \le a$, so $r_i=0$. Therefore,
\begin{align}
Q_{\bfr} b^d \lesssim  \prod_{i=2}^\ceilnk \frac{T_i^{r_i}}{r_i!}. \label{eq6}
\end{align}
Note that the terms $T_i$ still depend on $\bfr$, but only through $\rho(\bfr)$. The next lemma ensures that the terms $T_i$ are small enough as long as $\rho \le c$; the proof is given in the appendix. Let \[
 c_5 = \min \left \lbrace\frac{1}{10}, \frac{c}{2 \log b},\frac{1-c}{2\log b}\right\rbrace \in (0,1),\]
where $c$ is defined in (\ref{defofc}). 
\begin{lemma}
\label{lemmaatmostnconstant}
Suppose $\ftn(n)=\epsilon$ for all $n$. If $\bfr \in \mc{R}_1$ and $n$ is large enough, then for all $3 \le i \le \ceilnk-1$, 
\begin{equation*}
T_i \le n^{-c_5},
\end{equation*}
and for $i\in \left\lbrace 2, \ceilnk \right\rbrace$,
\[T_i \le n^{1-c_5}.\] \qed
\end{lemma}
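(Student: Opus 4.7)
The plan is to treat $T_i$ as a discrete function of $i \in \{2, 3, \ldots, \ceilnk\}$ and to exploit that it is approximately unimodal: the minimum lies near $i \sim a/2$, and the maximum on this range is attained only at the endpoints $i = 2$ and $i = \ceilnk$. The asymmetric form of the lemma (looser bound $n^{1-c_5}$ at the endpoints, tighter bound $n^{-c_5}$ in the interior) exactly reflects this shape.

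The first step is to rewrite $T_i$ in a convenient polynomial-in-$n$ form. Using the relations collected in Section~\ref{sectionkeyfacts} (in particular $a \sim \gamma \sim 2 \log_b n$, $k^2 = n^{2+o(1)}$, $e^{\rho i} = n^{2\rho/\log b + o(1)}$ for $i = \Theta(\log n)$), and recognising $\binom{n}{i} q^{\binom{i}{2}}$ as the expected number of independent sets of size $i$, I obtain
\[
T_i \;\sim\; \frac{e^{\rho i}\,k^2\,\binom{a}{i}^2}{\binom{n}{i}\,q^{\binom{i}{2}}}.
\]
Together with the standard polynomial estimate for the denominator in the regime $\alpha_0 - i = O(1)$ recalled in Section~\ref{lowerboundsection}, this yields the explicit asymptotic $T_{\ceilnk} = n^{2 - (a - \ceilnk) - \Delta - 2(1-\rho)/\log b + o(1)}$, in particular $T_a = n^{2 - \Delta - 2(1-\rho)/\log b + o(1)}$ if $\ceilnk = a$.

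Next I verify the endpoint bound $T_i \le n^{1-c_5}$. For $i = 2$ the crude estimate $T_2 = O(\log^6 n)$ is more than enough. For $i = \ceilnk$ I split into two subcases. If $\ceilnk \le a - 1$, the required inequality reduces to $c_5 \le \Delta + 2(1-\rho)/\log b$, which holds trivially since $\rho \le c = (1 - c_2)/2$ already forces $2(1-\rho)/\log b \ge (1 + c_2)/\log b$ and this comfortably exceeds $c_5$ by the definition of $c_5$. If $\ceilnk = a$ the inequality becomes the more delicate $c_5 \le 2(1-\rho)/\log b - (1 - \Delta)$; here the condition $\ceilnk = a$ forces $x_0 + \epsilon \le \Delta$, so Lemma~\ref{technicallemma2} applies and yields $1 - \Delta \le 2 c_2/\log b$, after which an arithmetic check using $c_5 \le (1 - c_2)/(4 \log b)$ confirms that the resulting margin strictly exceeds $c_5$ and absorbs the $o(1)$ error.

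For the interior range $3 \le i \le \ceilnk - 1$, I analyse the ratio
\[
\frac{T_i}{T_{i-1}} \;=\; \frac{e^\rho\,b^{i-1}(a - i + 1)^2}{n\,i},
\]
which is $o(1)$ for bounded $i$ and grows to $n^{1 + o(1)}$ as $i$ approaches $a$, crossing one near $i \sim \gamma/2$. Equivalently, the entropic representation $\log_b T_i = O(\gamma) - \gamma^2 (i/a)(1 - i/a)/2$ (with the $H_b$-term absorbed into $O(\gamma)$) exhibits a concave-down shape, so the maximum of $T_i$ on the interior range is attained at $i = 3$ or at $i = \ceilnk - 1$. The direct estimate $T_3 = n^{-1 + o(1)} \le n^{-c_5}$ is immediate, and $T_{\ceilnk - 1} = T_{\ceilnk} \cdot n^{-1 + o(1)}$ together with the endpoint bound from the previous step yields $T_{\ceilnk - 1} \le n^{-c_5}$. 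The main obstacle is the case $\ceilnk = a$ in the endpoint step: this is the only place where condition~(\ref{aste}) enters (through Lemma~\ref{technicallemma2}), and the definition of $c_5$ as a minimum over several quantities has been engineered precisely so that a strictly positive margin remains after all $o(1)$ errors are absorbed.
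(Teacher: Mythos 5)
Your endpoint analysis is essentially the paper's argument: same explicit asymptotic for $T_\ceilnk$, same case split on $\ceilnk=a$ vs.\ $\ceilnk\le a-1$, same invocation of Lemma~\ref{technicallemma2} when $\ceilnk=a$ (so that $x_0+\epsilon\le\Delta$), and the same arithmetic with $c_5$. The gap is in the interior range $3\le i\le\ceilnk-1$, where you rely on the heuristic ``entropic representation'' $\log_b T_i = O(\gamma) - \gamma^2(i/a)(1-i/a)/2$ to conclude that $\log_b T_i$ has the shape required to force the maximum to the two boundary indices. That step is not justified: the $O(\gamma)$ error hides terms such as $2\log_b\binom{a}{i}$ whose \emph{second} difference in $i$ is of order $\frac{2a}{i(a-i)\log b}$, and near $i=a$ this is $\Theta(1)$ --- the same order as the $\gamma^2/a^2 \approx 1$ second difference of your parabola. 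Concretely, the second difference of $\log_b T_i$ equals $1 + 2\log_b\frac{a-i}{a-i+1}-\log_b\frac{i+1}{i}\approx 1 - \frac{2}{(a-i)\log b}-\frac{1}{(i+1)\log b}$, which is negative whenever $a-i < 2/\log b$ (so for all $p<1-1/e^2$ and $i$ within $2/\log b$ of $a$). Hence $\log_b T_i$ is \emph{not} convex on the whole range, and ``the ratio crosses one'' plus ``the representation is a parabola up to $O(\gamma)$'' together do not license ``the maximum on the interior range is attained at the ends.'' (Incidentally, you write ``concave-down,'' which would mean $\cap$-shaped and would put the maximum in the interior, the opposite of what you need; presumably you intend convex, i.e.\ concave-up.)

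The conclusion you want is true, and the paper gets it by avoiding the global shape claim entirely: it bounds $T_3,\dots,T_6$ by the small-$i$ ratio estimate, gives a direct bound $T_i\le n^{2-0.3i}\le n^{-0.1}$ on the middle block $7\le i\le 1.2\log_b n$, and then separately shows $T_{i+1}/T_i\ge n^{0.2}$ once $i\ge 1.2\log_b n$, so that $T_i\le T_{\ceilnk-1}$ on that final block. This piecewise treatment sidesteps the failure of convexity near $i=a$. To repair your argument you could either reproduce the paper's piecewise estimates, or observe that your entropic representation already implies $T_i\le n^{-\Theta(\gamma)}\ll n^{-c_5}$ for, say, $\gamma/10\le i\le 0.9a$ (where the parabola term dwarfs the $O(\gamma)$ error), and handle the two thin boundary windows $3\le i<\gamma/10$ and $0.9a<i\le\ceilnk-1$ by the ratio being $\ll 1$ resp.\ $\gg 1$ there. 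Either fix is routine, but as written the step from the heuristic parabola to the endpoint-max conclusion is a genuine gap.
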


Let
\[R= \sum_{i=2}^a r_i\]
denote the \emph{total number of overlap blocks}. The following lemma, which is also proved in the appendix, gives a bound for the quantity appearing in (\ref{eq6}) in terms of $R$ instead of the individual $r_i$'s.
\begin{lemma}
 \label{contributionnottoolarge}
If $\ftn(n)=\epsilon$ for all $n$ and if $n$ is large enough, then for all $\bfr \in \mc{R}_1$, 
\begin{equation}
 Q_\bfr b^d \lesssim  n^{-c_5R/2}  \exp \parenth{ \frac{n}{\log^{9} n}},
\label{eq7}
\end{equation}
where $c_5 >0$ is the constant from Lemma \ref{lemmaatmostnconstant}. \qed
\end{lemma}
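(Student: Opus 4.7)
The plan is to start from the bound (\ref{eq6}), namely $Q_\bfr b^d \lesssim \prod_{i=2}^\ceilnk T_i^{r_i}/r_i!$, and to extract the target factor $n^{-c_5 R/2}$. To this end I would define $\tilde T_i := n^{c_5/2} T_i$, so that
\[
\prod_{i=2}^\ceilnk \frac{T_i^{r_i}}{r_i!} = n^{-c_5 R/2} \prod_{i=2}^\ceilnk \frac{\tilde T_i^{r_i}}{r_i!},
\]
and the task reduces to showing that the remaining product is at most $\exp(n/\log^9 n)$.

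The key observation is that each factor $\tilde T_i^{r_i}/r_i!$ is a single summand of the Taylor series for $\exp(\tilde T_i)$, so trivially $\tilde T_i^{r_i}/r_i! \le \exp(\tilde T_i)$ regardless of the value of $r_i$. This yields the pointwise bound
\[
\prod_{i=2}^\ceilnk \frac{\tilde T_i^{r_i}}{r_i!} \le \exp\left(\sum_{i=2}^\ceilnk \tilde T_i\right),
\]
uniformly in $\bfr \in \mc{R}_1$; the individual values of the $r_i$ drop out of the calculation entirely, which is the main advantage of this reformulation.

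It then remains to bound $\sum_i \tilde T_i$, and here I would use Lemma \ref{lemmaatmostnconstant}. For $3 \le i \le \ceilnk - 1$ we have $\tilde T_i \le n^{-c_5/2}$, while for $i \in \{2, \ceilnk\}$ we have $\tilde T_i \le n^{1 - c_5/2}$. By (\ref{key3}) there are only $O(\log n)$ indices in the middle range, so their contribution totals $O(\log n \cdot n^{-c_5/2}) = o(1)$, while the two extreme indices contribute at most $2 n^{1-c_5/2}$. Since $c_5 > 0$ is a positive constant, $\sum_{i=2}^\ceilnk \tilde T_i \le 3 n^{1 - c_5/2} \le n/\log^9 n$ for $n$ large enough, completing the argument. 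No single step is a genuine obstacle; the proof is essentially bookkeeping once the rescaling by $n^{c_5/2}$ is introduced to absorb the prefactor $n^{-c_5 R/2}$, and the Taylor series trick is used to avoid any case analysis over the $r_i$.
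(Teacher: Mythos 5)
Your proof is correct, and it takes a genuinely different and cleaner route than the paper's. The paper proves the bound by a case analysis on each index $i \in \{2, \ceilnk\}$: if $r_i \le n/\log^{11} n$, then $T_i^{r_i}/r_i! \le n^{(1-c_5)r_i} \le n^{-c_5 r_i/2}\exp(n/(2\log^{10}n))$, whereas if $r_i > n/\log^{11}n$, the factorial $r_i! \ge (r_i/e)^{r_i}$ is already large enough to force $T_i^{r_i}/r_i! \le n^{-c_5 r_i/2}$ directly; the middle indices $3 \le i \le \ceilnk - 1$ are handled trivially since $T_i \le n^{-c_5}$. Your rescaling $\tilde T_i = n^{c_5/2}T_i$ together with the elementary inequality $x^{r}/r! \le e^{x}$ for $x \ge 0$ absorbs the case analysis entirely: the individual $r_i$ disappear, and the problem collapses to estimating the deterministic sum $\sum_i \tilde T_i = O(n^{1-c_5/2}) \le n/\log^9 n$. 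This is a slicker argument that gives up nothing: both proofs rely on the same numerical input from Lemma \ref{lemmaatmostnconstant}, and both produce a subexponential correction factor of the same quality. Your version is arguably preferable since it makes the mechanism ("the $T_i$'s are each well below $n^{c_5/2}$, so the $n^{-c_5R/2}$ prefactor can always be peeled off at the cost of a generating-function-type bound") transparent, whereas the paper's threshold $n/\log^{11}n$ looks somewhat arbitrary until one unwinds why it works.
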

Now we are finally ready to sum (\ref{eq7}) over all $\bfr \in \mc{R}_1$. For this, note that if $n$ is large enough, then given $R$, there are at most $(2e \log_b n)^{R}$ ways to select $r_2, \dots, r_a$ such that $\sum_{i=2}^a r_i = R$. This is because there are
\begin{align*}
{{R+a-2} \choose R} &\le \left(\frac{e \left(R+a-2\right)}{R} \right)^{R} \le \left(e\left(1+a-2\right) \right)^{R} \le (2e \log_b n)^{R}
\end{align*}
ways to write $R$ as an ordered sum with $a-1$ nonnegative summands.

Using this and Lemma \ref{contributionnottoolarge}, if $n$ is large enough and $\ftn(n)=\epsilon$ for all $n$, we can now simply take the sum over $R$.
\begin{align*}
 \sum_{\bfr \in \mc{R}_1} Q_\bfr b^d &\lesssim \sum_{R=0}^{\infty} \parenth{\parenth{2e \log_b n}^{R}  n^{-c_5R /2} \exp \parenth{ \frac{n}{\log^{9}n}}} \\
&=  \exp \parenth{ \frac{n}{\log^{9}n}} \sum_{R=0}^{\infty} \parenth{\frac{2e \log_b n}{n^{c_5/2}}}^{R}  \le  2\exp \parenth{ \frac{n}{\log^9 n}}.
\end{align*}
Therefore, we have that $\sum_{\bfr \in \mc{R}_1} Q_\bfr b^d \le \exp \parenth{ \frac{n}{\log^8 n}}$ for $n$ large enough if $\ftn(n)=\epsilon$ for all $n$, as required.

\subsection{Pairs of partitions with many small overlap blocks}
\label{middle}
In this section, we will bound the contribution to the sum (\ref{thesum}) from those overlap sequences $\bfr$ with $\rho=\rho(n):=v/n \ge c$, but where there are either still many singletons which are not involved in the overlap (so $n-v$ is large) or many vertices in `small' overlap blocks of size at most $0.6\gamma$. More specifically, fix a constant $0<{c'}<1$ and consider only those $\bfr$ with $\rho > c$ such that there are at least ${c'} n$ singletons or at least ${c'} n$ vertices in overlap blocks of size at most $0.6 \gamma$:
\[
 \mc{R}_2^{c'} = \left \lbrace \bfr \mid \rho >c  \wedge \parenth{\sum_{2 \le i \le 0.6\gamma} i r_i \ge {c'} n  \vee \rho \le 1-{c'}}\right \rbrace.
\]
We will prove that for any fixed ${c'} \in (0,1)$, the contribution to the sum (\ref{thesum}) from these overlap sequences is negligible: if $\ftn(n)=\epsilon$ for all $n$ in the definition of $k$, then
\[\sum_{\bfr \in \mc{R}_2^{c'}} Q_\bfr b^d =o(1).\]
To do this, we will generate all pairs of partitions in this range by taking the first partition, grouping the vertices into subsets of its parts which will form the singletons and overlap blocks, and rearranging them into $k$ sets to get the new partition. If we do this according to some $\bfr \in \mc{R}_2^{c'}$, then we can bound the number of ways to generate another partition as well as the number of overlapping edges $d$ between the two partitions.

\subsubsection{Preliminaries}
We first need some notation and preliminary results. Since some of our bounds need to be extremely accurate, we distinguish between parts of size $a$ and parts of size at most $a-1$. By (\ref{keyceilnk}), $\ceilnk \le a$, so there may of course be no parts of size $a$ at all. Fix an \emph{arbitrary ordered $k$-equipartition} $\pi_1$, and let 
\[
\mc{P}_2^{c'} = \left\lbrace \text{ordered $k$-equipartitions $\pi_2$ such that }\bfr (\pi_1,\pi_2) \in \mc{R}_2^{c'} \right \rbrace.
\]
Given $\pi_2 \in \mc{P}_2^{c'}$, let 
\begin{align*}
 V_1 =& \text{ set of vertices in the overlap of $\pi_1$ and $\pi_2$ that are in parts of size $a$ in $\pi_1$}\\ 
 V_2 =& \text{ set of vertices in the overlap of $\pi_1$ and $\pi_2$ that are in parts of size at most $a-1$}\\
&\text{ in $\pi_1$}\\
 D_1 =& \text{ set of overlapping forbidden edges between vertices in $V_1$}\\
 D_2 =& \text{ set of overlapping forbidden edges between vertices in $V_2$.}
\end{align*}
For $i \in \lbrace 1,2 \rbrace$, let $v_i=|V_i|$ and $d_i=|D_i|$, so $v_1+v_2=v$ and $d_1+d_2=d$.

Given $\pi_1$ and $\pi_2$, we define the \emph{overlap graph}  of $\pi_1$ and $\pi_2$ as the union of all the vertices in overlap blocks together with all the common forbidden edges. By definition, the overlap graph is a disjoint union of cliques, each containing between $2$ and $\ceilnk$ vertices. Note that the vertex set is exactly $V_1 \cup V_2$ and the edge set exactly $D_1 \cup D_2$. Denote by $\mathbf{g}=(g_j)_{j=1}^{v_1}$ the degree sequence in the overlap graph of the vertices in $V_1$. Then $g_j \le a-1$ for all $j$, so
\[
 2d_1 = \sum_{j=1}^{v_1} g_j \le  v_1 \left(a-1\right).
\]
Similarly,
\[
 2d_2 \le v_2 \left({a-2}\right).
\]
To quantify how close the overlap graph of $\pi_1$ and $\pi_2$ is to consisting only of cliques (or overlap blocks) of sizes $a$ or $a-1$, we define the following simple edge density parameters: let
\begin{align*}
\beta_1&= \frac{2d_1}{v_1   \left( a-1\right)} \le 1\nonumber \\
\beta_2&= \frac{2d_2}{v_2   \left( a-2\right)} \le 1.
\label{defofbetai}
\end{align*}
If $\beta_1$ and $\beta_2$ are close to $1$, then the overlap of $\pi_1$ and $\pi_2$ consists almost entirely of large overlap blocks which are almost entire parts. We will now give some simple bounds for the number of vertices in smaller overlap blocks in terms of $\beta_1$ and $\beta_2$.

Indeed, for $x \in (0,1)$, we denote by $w_{x,1}$  the proportion of vertices in $V_1$ which have degree at most $x  \left( a-1\right)$ within the overlap graph, i.e.,
\[
 w_{x,1} = \frac{\text{\# $j$ with }g_j \le x  \left( a-1\right)}{v_1}.
\]
Then, as $g_j \le a-1$ for all $j$, for any $x \in (0,1)$,
\[
 \beta_1 v_1 \left( a-1\right) = 2d_1= \sum_{j=1}^{v_1}g_j \le w_{x,1} v_1 x  \left( a-1\right)+(1-w_{x,1})v_1   \left( a-1\right),
\]
so
\begin{equation}
\label{wxboundi}
 w_{x,1} \le \frac{1-\beta_1}{1-x}.
\end{equation}
Similarly, define $w_{x,2}$ as the proportion of vertices in $V_2$ that have degree in the overlap graph of at most $x  \left( {a-2}\right)$. Analogously, we have
\begin{align*}
 w_{x,2} \le \frac{1-\beta_2}{1-x}.
\end{align*}

Next, we need a bound for the total number of overlap blocks. As in the previous section, let 
\begin{equation}R = \sum_{i=2}^{a}r_i \label{defofR}\end{equation}
 denote the number of overlap blocks. Let $R_1$ and $R_2$ denote the number of overlap blocks in parts of size $a$ and of size at most ${a-1}$ in $\pi_1$, respectively, so $R=R_1+R_2$. 

Note that
\[
R_1= \sum_{j=1}^{v_1} \frac{1}{g_j+1},
\]
as every overlap block of $s$ vertices contributes exactly $s$ instances of the summand $\frac{1}{s}$. For any $0<x<y<1$, there are $ w_{x,1}v_1 $ values $i$ such that $1 \le g_i \le x \left( a-1\right)$, at most $w_{y,1}v_1$ values $i$ such that $x \left( a-1\right) <g_i \le y \left( a-1\right)$, and for the remaining values $i$, $g_i > y \left(a-1\right)$. Therefore,
\begin{align*}
 R_1&\le \frac{w_{x,1}v_1}{2}+\frac{w_{y,1}v_1}{x \left( a-1\right)+1}+ \frac{v_1}{y \left( a-1\right)+1} \le \frac{w_{x,1}v_1}{2}+\frac{w_{y,1}v_1}{x \gamma}+ \frac{v_1}{y\gamma}.
\end{align*}
Using (\ref{wxboundi}), it follows that
\begin{align}
R_1&\le \frac{(1-\beta_1)v_1}{2(1-x)} + \frac{(1-\beta_1)v_1}{x(1-y)\gamma-1}+\frac{v_1}{y\gamma-1} \label{eqforr1},
\end{align}
where in the last term $y\gamma$ was replaced by $y\gamma-1$ so that the corresponding expression holds for $R_2$ as well. Indeed, we can see that
\begin{align}
 R_2& \le \frac{(1-\beta_2)v_2}{2(1-x)} + \frac{(1-\beta_2)v_2}{x(1-y)\gamma-1}+\frac{v_2}{y\gamma-1}  \label{eqforr2}.
\end{align}

In the following lemma, which is proved in the appendix, we give some weaker but more convenient conditions for $\pi_2$ and show that any $\pi_2 \in \mc{P}_2^{c'}$ meets one of these conditions.
\begin{lemma}
\label{implies}
If $\pi_2 \in \mc{P}_2^{c'}$ and $n$ is large enough, then at least one of the following three conditions applies.
\begin{enumerate}[I)]
 \item $v_1 \ge \frac{n}{\parenth{\log \log n}^2}$ and $\beta_1 \le 1-\frac{(\log \log n)^4}{\log n}$. \label{condition1}
 \item $v_2 \ge \frac{n}{\parenth{\log \log n}^2}$ and $\beta_2 \le 1-\frac{(\log \log n)^4}{\log n}$. \label{condition2}
 \item Neither \ref{condition1} nor \ref{condition2} holds, and $c <\rho \le 1-{c'}$.\qed
\end{enumerate} 
\end{lemma}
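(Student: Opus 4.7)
My plan is to prove the contrapositive: assume $\pi_2 \in \mc{P}_2^{c'}$ and that neither (I) nor (II) holds, and then establish (III). Since $\pi_2 \in \mc{P}_2^{c'}$ forces $\bfr(\pi_1,\pi_2) \in \mc{R}_2^{c'}$, we already have $\rho > c$, so the task reduces to proving $\rho \le 1 - c'$.

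I would argue by contradiction: suppose $\rho > 1 - c'$. Then the definition of $\mc{R}_2^{c'}$ forces the first disjunct, so at least $c' n$ vertices lie in overlap blocks of size at most $0.6\gamma$. Every such vertex lies in either $V_1$ or $V_2$, so by pigeonhole at least $c' n / 2$ of them lie in some $V_j$; I would treat $j = 1$ explicitly, as the case $j = 2$ is analogous with $a-1$ replaced by $a-2$ throughout. This immediately gives $v_1 \ge c' n / 2 \ge n/(\log\log n)^2$ for $n$ large, so the first half of (I) holds automatically; since (I) is assumed to fail, we must have $\beta_1 > 1 - (\log\log n)^4/\log n$.

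The key computation is then short. Each of these at least $c'n/2$ vertices of $V_1$ lies in an overlap block of size $s \le 0.6\gamma$, hence has degree $g_j = s-1$ in the overlap graph, which is at most $0.6\gamma - 1 \le 0.8(a-1)$ for $n$ large (using $\gamma = (a-1)+\Delta$ with $\Delta \in [0,1)$ and $a \to \infty$). Therefore, with $x = 0.8$, the estimate (\ref{wxboundi}) gives
\[
\frac{c'}{2} n \;\le\; w_{0.8,\,1}\, v_1 \;\le\; \frac{1-\beta_1}{0.2}\, v_1 \;=\; 5(1-\beta_1)\, v_1 \;\le\; 5 n \cdot \frac{(\log\log n)^4}{\log n},
\]
which fails for $n$ sufficiently large. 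This contradicts $\rho > 1 - c'$ and completes the proof. There is no substantive obstacle in the argument; the only care needed is to pick $x$ (I choose $x = 0.8$) so that both degree bounds $0.6\gamma - 1 \le x(a-1)$ and $0.6\gamma - 1 \le x(a-2)$ hold for large $n$, the latter reducing to $3\Delta \le a$, which is immediate. The ready-made bound (\ref{wxboundi}) does the rest of the work.
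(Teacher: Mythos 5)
Your proposal is correct and takes essentially the same route as the paper: reduce to the case $\sum_{2\le i\le 0.6\gamma} ir_i \ge c'n$, pigeonhole to get at least $c'n/2$ of these vertices into $V_1$ (say), conclude $v_1$ is large, and then show $\beta_1$ is bounded away from $1$ because many vertices have small degree in the overlap graph. The only cosmetic difference is that you invoke the pre-established bound (\ref{wxboundi}) with $x=0.8$ to get $1-\beta_1 \ge 0.1\,c'n/v_1$, whereas the paper re-derives the same estimate directly by bounding $d_1 = \sum {i\choose 2}\hat r_i$ from above and then computing $\beta_1 \le 1-(0.1+o(1))\,c'n/v_1$; since (\ref{wxboundi}) is itself obtained from the degree-sum identity, the two computations are the same.
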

Still fixing the arbitrary ordered $k$-equipartition $\pi_1$, let
\begin{align*}
 \mc{P}^{\text{I}} &= \left \lbrace\text{ordered $k$-equipartitions $\pi_2$ such that $v_1 \ge \frac{n}{\parenth{\log \log n}^2}$  and $\beta_1 \le 1-\frac{(\log \log n)^4}{\log n}$}\right \rbrace\\
 \mc{P}^{\text{II}} &= \left \lbrace \text{ordered $k$-equipartitions $\pi_2$ such that $v_2 \ge \frac{n}{\parenth{\log \log n}^2}$ and $\beta_2 \le 1-\frac{(\log \log n)^4}{\log n}$}\right \rbrace \\
 \mc{P}^{\text{III}} &= \left \lbrace  \text{ordered $k$-equipartitions $\pi_2$ such that $c < \rho \le 1-{c'}$}\right \rbrace \setminus  \mc{P}^{\text{I}} \setminus  \mc{P}^{\text{II}},
\end{align*}
where $v_i$, $\beta_i$ and $\rho$ refer to the overlap of $\pi_1$ and $\pi_2$. Then by Lemma \ref{implies} for $n$ large enough,
\[
 \mc{P}_2^{c'} \subset  \mc{P}^{\text{I}} \cup  \mc{P}^{\text{II}}\cup  \mc{P}^{\text{III}}.
\]
For an overlap sequence $\bfr$, denote by $P'_{\mathbf{r}}$ the number of ordered $k$-equipartitions with overlap $\mathbf{r}$ with $\pi_1$. Then by the definition (\ref{defofq}) of $Q_\bfr$,
\begin{align}
Q_{\mathbf{r}} &=  \frac{P_{\mathbf{r}}}{P^2}= \frac{P'_{\mathbf{r}}}{P}  .\label{QandP}
\end{align}
Using (\ref{pasymp}) in the last step, if $n$ is large enough,
\begin{align}
\sum_{\bfr \in \mc{R}_2^{c'}} Q_{\bfr} b^d &  = \sum_{\bfr \in \mc{R}_2^{c'}} \frac{P'_\bfr}{P} b^d= \sum_{\pi_2 \in \mc{P}_2^{c'}} P^{-1} b^{d(\pi_1,\pi_2)} \le \sum_{\pi_2 \in \mc{P}^{\text{I}} \cup  \mc{P}^{\text{II}}\cup  \mc{P}^{\text{III}}} P^{-1} b^{d(\pi_1,\pi_2)} \nonumber \\
&= \sum_{\pi_2 \in \mc{P}^{\text{I}} \cup  \mc{P}^{\text{II}}\cup  \mc{P}^{\text{III}}} k^{-n} b^{d(\pi_1,\pi_2)} \exp(o(n)), \label{conversion}
\end{align}
where $d(\pi_1,\pi_2):=d(\bfr)$ if $\bfr$ is the overlap sequence of $\pi_1$ and $\pi_2$.

We will now generate and count all $\pi_2 \in \mc{P}^{\text{I}} \cup  \mc{P}^{\text{II}}\cup  \mc{P}^{\text{III}}$. Starting with $\pi_1$, we first subdivide the parts into overlap blocks and singletons. Then we arrange those overlap blocks and singletons into $k$ new parts to generate $\pi_2$, and sum the resulting $b^{d(\pi_1,\pi_2)}$.

\subsubsection{Contribution from Cases I and II}
\label{contribution12}
 We start by generating the partitions in $\mc{P}^{\text{I}} \cup \mc{P}^{\text{II}}$ according to the following strategy. We group the vertices into subsets of the parts of $\pi_1$ which form the overlap blocks and singletons for the overlap with $\pi_2$, and give a bound for the number of ways this can be done in Lemma \ref{choppinguplemma1}. Then we sort the overlap blocks and singletons into the $k$ parts of $\pi_2$. If there are $R$ overlap blocks and $n-v$ singletons, then there are at most $k^{n-v+R}$ choices for this. Considering (\ref{conversion}), the term $k^n$ cancels out with $k^{-n}$, leaving just $k^{-v+R}b^d$ multiplied by the bound from Lemma \ref{choppinguplemma1} as an upper bound for (\ref{conversion}). If Cases I or II apply and we also use the bounds (\ref{eqforr1}) and (\ref{eqforr2}) for $R$, then $d_i$ will be small enough in comparison to $v_i$ for at least one $i \in \lbrace 1,2 \rbrace$ so that  $k^{-v+R}$ is much smaller than $b^d$, allowing us to bound the total contribution from Cases I and II to (\ref{conversion}) and thereby to (\ref{thesum}) by $o(1)$.

\begin{lemma}
 \label{choppinguplemma1}
Denote by $S$ the number of ways $n$ vertices can be partitioned into subsets (of any size and number) of the parts of $\pi_1$. Then
\[
 S\le \exp\left(O \left( n \log \log n\right)\right).
\]
\end{lemma}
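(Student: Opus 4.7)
The plan is to decompose the global partition of the $n$ vertices into independent choices, one within each part of $\pi_1$. Since every piece of the partition is required to lie inside a single part of $\pi_1$, restricting the partition to any part $P\in\pi_1$ yields a set partition of $P$, and conversely any family of set partitions of the individual parts reassembles uniquely into a global partition of the type counted by $S$. This gives the exact factorisation
\[
S \;=\; \prod_{P\in\pi_1} B_{|P|},
\]
where $B_m$ denotes the $m$-th Bell number, i.e.\ the number of set partitions of an $m$-element set.

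It then remains to bound each factor crudely. By key fact \eqref{keyceilnk} from Section \ref{sectionkeyfacts}, every part of $\pi_1$ has size at most $a = \left\lfloor \gamma \right\rfloor + 1 = O(\log n)$. Using the trivial bound $B_m \le m^m$ (each of the $m$ elements has at most $m$ blocks to join), we get
\[
B_{|P|} \;\le\; a^a \;=\; \exp\!\parenth{O(\log n \cdot \log\log n)}
\]
for every $P\in\pi_1$. Since there are $k$ parts and $k = \Theta(n/\log n)$ by key fact \eqref{key1}, taking the product over all parts yields
\[
\log S \;\le\; k \cdot O(\log n \cdot \log\log n) \;=\; O(n \log\log n),
\]
which is precisely the claimed bound.

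There is essentially no obstacle here — the lemma is a one-line factorisation followed by the trivial Bell-number estimate; its only role is to absorb the combinatorial cost of subdividing the parts of $\pi_1$ into a factor of size $\exp(O(n\log\log n))$, which is subexponential in $n$ and will later be dominated by the exponentially decaying contributions computed in the rest of Section \ref{middle}. The only mild point to double-check is that the trivial bound $B_m\le m^m$ is good enough; since $m\le a$ grows only logarithmically in $n$, even this crude estimate gives the stated $n\log\log n$ bound with room to spare (a sharper $B_m \le (m/\log m)^m$ would only save a $\log\log n$ factor, which is not needed).
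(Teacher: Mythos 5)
Your proof is correct and is essentially the same counting argument as the paper's: the paper bounds $S$ by noting that each subdivision arises from sorting all $n$ vertices into $a$ containers (so $S \le a^n$), which is precisely the observation underlying the crude Bell-number bound $B_m \le m^m$ you apply after factorising $S = \prod_P B_{|P|}$. Your explicit factorisation over parts is a minor cosmetic reorganisation of the same idea and yields the identical estimate.
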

\begin{proof}
If we sort the $n$ vertices into $a$ containers, this defines a subdivision of $\pi_1$ by letting all vertices be in the same set that are in the same part of $\pi_1$ and in the same container. Conversely, any possible subdivision of $\pi_1$ can be obtained in this way, since every part can only be partitioned into at most $a$ non-empty sets. Therefore, as $a = O(\log n)$,
\[
 S \le a^n =\exp\left(O \left( n \log \log n\right)\right).\]
\end{proof}
We are now ready to show that the contribution to (\ref{conversion}) from all ordered $k$-equipartitions in $\mc{P}^{\text{I}} \cup \mc{P}^{\text{II}}$ to (\ref{conversion}) is $o(1)$.
\begin{lemma}
\label{lemmar2}
 \[\sum_{\pi_2 \in \mc{P}^{\text{I}} \cup  \mc{P}^{\text{II}}} k^{-n} b^{d(\pi_1,\pi_2)} \exp(o( n)) =o(1).
\]
\end{lemma}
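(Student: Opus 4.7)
The plan is to generate every $\pi_2 \in \mc{P}^{\text{I}} \cup \mc{P}^{\text{II}}$ from the fixed partition $\pi_1$ in two stages: first split the vertices into subsets of the parts of $\pi_1$ (the singletons and prospective overlap blocks), then distribute these pieces into the $k$ parts of $\pi_2$. By Lemma~\ref{choppinguplemma1}, the first stage admits at most $S = \exp(O(n \log \log n))$ outcomes, and given one with $R$ overlap blocks and $n-v$ singletons the second stage admits at most $k^{n-v+R}$ outcomes. Since $d$ depends only on the subdivision, this yields
\[
\sum_{\pi_2 \in \mc{P}^{\text{I}} \cup \mc{P}^{\text{II}}} k^{-n}\,b^{d(\pi_1,\pi_2)}\,\exp(o(n)) \;\le\; \exp(o(n)) \cdot S \cdot \max_{\text{subdivision}} k^{R-v}\,b^{d},
\]
where the maximum ranges over all subdivisions whose induced overlap parameters fall in Case I or Case II. It therefore suffices to show that this maximum is $\exp(-\omega(n \log \log n))$.

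By key fact~(\ref{key2}), $\log k = (1+o(1))\gamma \log b / 2$, so, splitting the contributions by whether the overlap vertices lie in parts of size $a$ (index $i=1$) or in smaller parts (index $i=2$) in $\pi_1$,
\[
\log\bigl(b^{d} k^{R-v}\bigr) \;=\; \log b \cdot (\phi_1 + \phi_2) + o(n), \qquad \phi_i := d_i - \tfrac{\gamma}{2}(v_i - R_i).
\]
Substituting $d_1 = \beta_1 v_1(a-1)/2$, $d_2 = \beta_2 v_2(a-2)/2$ and the upper bounds (\ref{eqforr1})--(\ref{eqforr2}) on $R_1, R_2$ at the concrete values $x = 1/3$, $y = 1/2$ (and using $a-1 = \gamma-\Delta$), a short manipulation gives, for each $i \in \{1,2\}$,
\[
\phi_i \;\le\; -\tfrac{1}{8}(1-\beta_i)\,v_i\,\gamma + O(v_i).
\]

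On whichever index $i$ is ``active'' (i.e.\ forces membership in $\mc{P}^{\text{I}}$ or $\mc{P}^{\text{II}}$), the constraints $v_i \ge n/(\log \log n)^2$ and $1-\beta_i \ge (\log \log n)^4 / \log n$ combine with $\gamma \sim 2 \log n / \log b$ to yield $(1-\beta_i) v_i \gamma \ge 2n (\log \log n)^2 / \log b$, so $\log b \cdot \phi_{\text{active}} \le -\Omega(n (\log \log n)^2)$. On the inactive side we only use the crude bound $\phi_i \le O(v_i) \le O(n)$. Adding these two contributions and the $o(n)$ slack from the $\log k$ approximation gives
\[
\log\bigl(b^{d} k^{R-v}\bigr) \;\le\; -\Omega\bigl(n (\log \log n)^2\bigr),
\]
which dominates $\log S = O(n \log \log n)$ by a factor of $\log \log n$, so the overall sum is $o(1)$.

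The main obstacle is choosing $x, y$ in (\ref{eqforr1})--(\ref{eqforr2}) so that the ``good'' term $(1-\beta_i) v_i \gamma / 2$ in $\phi_i$ is not cancelled by the $(1-\beta_i) v_i \gamma / (4(1-x))$ contribution from the bound on $R_i$: any $x < 1/2$ keeps a positive fraction of the gain, while $y$ bounded away from $0$ and $1$ ensures that the residual $1/[x(1-y)\gamma - 1]$ and $1/(y\gamma - 1)$ factors contribute only $O(v_i)$ in the final bound. The $(\log\log n)^4$ gap between $\beta_i$ and $1$ in the definitions of $\mc{P}^{\text{I}}$ and $\mc{P}^{\text{II}}$ is tuned precisely to comfortably absorb both the subexponential $S$ from Lemma~\ref{choppinguplemma1} and the potential $O(n)$ slack from the inactive side.
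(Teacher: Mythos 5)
Your proposal is correct and follows essentially the same route as the paper: fix $\pi_1$, bound the number of subdivisions by Lemma~\ref{choppinguplemma1} and the number of rearrangements by $k^{n-v+R}$, then feed the bounds (\ref{eqforr1})--(\ref{eqforr2}) on $R_1,R_2$ into the exponent and observe that the ``active'' index contributes $-\Omega(n(\log\log n)^2)$, which dominates the $O(n\log\log n)$ slack. The only substantive differences are cosmetic: you use fixed $x=1/3$, $y=1/2$ where the paper takes $x=1/4$, $y=1-1/\log\log n$ (the paper's $y\to 1$ choice is tuned for reuse in Case~III and is not needed here), and you organize the calculation around $\phi_i = d_i - \tfrac{\gamma}{2}(v_i-R_i)$ rather than the paper's $k$-exponent form; also, the claim ``$\log k = (1+o(1))\gamma\log b/2$'' should really be the sharper additive estimate $\log k = \tfrac{\gamma}{2}\log b + o(1)$ that (\ref{key2}) actually delivers, since the multiplicative form by itself would only give $o(n\log n)$ slack after multiplying by $R-v$.
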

\begin{proof}
Fix $v_1$, $v_2$, $d_1$ and $d_2$ so that I or II holds. 
Let
\[
 \mc{P}(v_1,v_2,d_1,d_2) = \left \lbrace \pi_2 \in  \mc{P}^{\text{I}} \cup  \mc{P}^{\text{II}} \mid v_i(\pi_1,\pi_2)=v_i, d_i(\pi_1,\pi_2)=d_i, i=1,2\right \rbrace.
\]
Arrange the $n$ vertices into singletons and overlap blocks that are subsets of the parts of $\pi_1$ in accordance with $v_1$, $v_2$, $d_1$ and $d_2$. Now that we know the $R$ overlap blocks and $n-v$ singletons, the number of ordered $k$-equipartitions $\pi_2$ with these overlap blocks with $\pi_1$ is at most $k^{n-v+R}$, since we need to sort $n-v$ singletons and $R$ overlap blocks into $k$ parts.

Therefore, letting $x=\frac{1}{4}$ and $y=1-\frac{1}{\log \log n}$, then with (\ref{eqforr1}), (\ref{eqforr2}) and Lemma \ref{choppinguplemma1},
\begin{align}
\sum_{\pi_2 \in \mc{P}(r_1,r_2,d_1,d_2)} &k^{-n} b^{d(\pi_1,\pi_2)}\le Sk^{-n+n-v+\sum_{i=1}^2 \parenth{\frac{2(1-\beta_i)v_i}{3} + \frac{4(1-\beta_i)v_i}{(1-y)\gamma-4}+\frac{v_i}{y\gamma-1}} } b^{d_1+d_2} \exp \parenth{o(n)} \nonumber\\
&\le k^{\sum_{i=1}^2 \parenth{-v_i+\frac{2(1-\beta_i)v_i}{3} + \frac{4(1-\beta_i)v_i}{(1-y)\gamma-4}+\frac{v_i}{y\gamma-1}} }   b^{d_1+d_2} \exp\left(O (n \log\log n) \right).\label{calc1}
\end{align}
Note that as by (\ref{key2}) from Section \ref{sectionkeyfacts}, $b^{\frac{\gamma}{2}}\sim k$, and since $\beta_i v_i \le n$ for $i\in \lbrace 1,2 \rbrace$ and $a= \left \lfloor \gamma \right \rfloor+1\le \gamma+1$,
\begin{equation*}
b^{d_1+d_2}= b^{\beta_1 v_1 \frac{a-1}{2}+\beta_2 v_2 \frac{a-2}{2}} \le   b^{(\beta_1 v_1+\beta_2 v_2) \frac{\gamma}{2}}\le k^{\beta_1 v_1+\beta_2 v_2}  \exp(o(n)).
\end{equation*}
By (\ref{key7}) and since $v_i \le n$, $k^{\frac{v_i}{y\gamma-1}} \le \exp (O(n))$ for $i\in\lbrace1,2\rbrace$, and therefore (\ref{calc1}) becomes
\begin{align*}
\sum_{\pi_2 \in \mc{P}(r_1,r_2,d_1,d_2)} k^{-n} b^{d(\pi_1,\pi_2)}&\le k^{-\sum_{i=1}^2 \parenth{v_i(1-\beta_i)\left(\frac{1}{3} -\frac{4}{(1-y)\gamma-4}\right)}}  \exp\left(O (n \log\log n)\right) .
\end{align*}
Recall that $y=1-\frac{1}{\log \log n}$, so $(1-y)\gamma \rightarrow \infty$, and we have $\frac{1}{3} -\frac{4}{(1-y)\gamma-4} \ge \frac{1}{4}$ for $n$ large enough. Since I or II holds, there is an $i \in \lbrace 1,2 \rbrace$ such that $(1-\beta_i) v_i \ge \frac{n \parenth{\log \log n}^2}{\log n}$, so by (\ref{key7}),
\begin{align*}
\sum_{\pi_2 \in \mc{P}(r_1,r_2,d_1,d_2)} k^{-n} b^{d(\pi_1,\pi_2)} &\le k^{-\frac{n (\log \log n)^2}{4\log n} }\exp \parenth{ O (n \log\log n)}.
\end{align*}
As $f = O(n\log n)$ by (\ref{key6}), and since $v_i \le n$ and $d_i \le f$ for $i\in \lbrace 1,2 \rbrace$, there are only $O(n^4 \log^2 n)$ choices for the values of $v_i \le n$ and $d_i$ for $i\in \lbrace 1,2 \rbrace $. Hence,
\begin{align*}
\sum_{\pi_2 \in \mc{P}^{\text{I}} \cup  \mc{P}^{\text{II}}} k^{-n} b^{d(\pi_1,\pi_2)} \exp(o(n)) &\le k^{-\frac{n(\log \log n)^2}{4\log n}}\exp\parenth{O(n\log \log n)}\\
&= \exp\parenth{-\Theta\parenth{n(\log \log n)^2}}=o(1).
\end{align*}
\end{proof}
\subsubsection{Contribution from Case III}
\label{contribution3}
We have to be a bit more careful in the case where neither I nor II holds. We will proceed similarly as in the proof of Lemma \ref{lemmar2}: we subdivide $\pi_1$ into subsets and then sort the singletons and overlap blocks into the $k$ parts to form the new partition $\pi_2$. Since for both $i \in \lbrace 1,2 \rbrace$, $\beta_i$ is either close to $1$ or $v_i$ is negligibly small, most of the overlap blocks will be almost entire parts of $\pi_1$. If we place those large overlap blocks first, they occupy a constant fraction of about $\rho k$ of the $k$ parts almost entirely, so the remaining roughly $(1-\rho)n$ vertices and smaller overlap blocks have fewer choices left, namely only about $(1-\rho)k$ choices each. This will give an additional factor of about $(1-\rho)^{(1-\rho)n}$. Almost everything else will turn out to be subexponential, except for a term which is about $b^{(1-\Delta) v/2}$. As $v=\rho n$, this will result in a total bound which is roughly of the form $b^{-((1-\rho)\log_b(1-\rho)-(1-\Delta)\rho/2)n}$. Comparing the exponent of this expression with condition (\ref{aste}) from Theorem \ref{maintheorem} (using the technical lemmas we proved in Section \ref{technicallemmas}), we will show that the sum is $o(1)$ for $c<\rho < 1-{c'}$.

Instead of Lemma \ref{choppinguplemma1}, which gave a fairly slack bound on the number of ways the vertices may be arranged into subsets of the parts of $\pi_1$, we now need a more accurate bound. The following lemma ensures that if Condition III applies, the number of ways to subdivide $\pi_1$ is subexponential.
\begin{lemma}
 \label{choppinguplemma3}
Fix integers $v_1$, $v_2$, $d_1$, $d_2$ so that I and II do not hold as above. Denote by $S(v_1,v_2,d_1,d_2)$ the number of ways the vertices can be partitioned into subsets of the parts of $\pi_1$ which form overlap blocks and singletons according to $v_i$ and $d_i$, $i \in \lbrace 1,2 \rbrace$. Then there is a function $S'= S'(n)$ which does not depend on $v_i$ or $d_i$, $i=1,2$, such that
\[
 S(v_1,v_2,d_1,d_2) \le S'\le\exp \left( o(n)\right).
\]
\end{lemma}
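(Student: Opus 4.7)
My plan is to exploit the Case III hypotheses --- for each $i\in\{1,2\}$, either $v_i<n/(\log\log n)^2$ (``tiny'') or $\beta_i>1-(\log\log n)^4/\log n$ (``dense'') --- to constrain the subdivisions of $\pi_1$ enough to give a uniform $\exp(o(n))$ bound. The first ingredient is to control the total number of overlap blocks: combining (\ref{eqforr1}) and (\ref{eqforr2}) with $x=1/2$ and $y=1-4/\gamma$, I obtain $R_i\le 2(1-\beta_i)v_i+O(v_i/\gamma)$. Under Case III, either $v_i$ is tiny so $R_i\le v_i/2$, or $(1-\beta_i)v_i\le n(\log\log n)^4/\log n$ dominates; either way, $R=R_1+R_2=O(n(\log\log n)^4/\log n)$. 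A further application of (\ref{wxboundi}) at various thresholds $x$ will control how many overlap vertices sit in blocks of given sizes.

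Next, I enumerate subdivisions by classifying each part $P$ of $\pi_1$ as \emph{trivial} (all singletons), \emph{full} (one overlap block equal to $P$), or \emph{complex} (anything else). The simple-type assignment across parts contributes at most $2^k=\exp(o(n))$ choices in total. Every complex part hosts at least one non-full overlap block, so the number of complex parts is at most $R$. I further split complex parts into \emph{near-full} ones, whose largest block has size greater than $(1-\eta)|P|$ for a slowly decreasing parameter $\eta=\eta(n)$, and the remainder. Applying (\ref{wxboundi}) with $x=1-\eta$ bounds the total count of overlap vertices sitting outside near-full blocks, and hence controls both (a) the total number of ``anomalous'' vertices accumulated across near-full complex parts, and (b) the number of non-near-full complex parts.

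The final and decisive step is to bound the per-complex-part subdivision count. For a near-full complex part, the subdivision is determined by specifying the at most $\eta|P|$ anomalous vertices in $P$ together with their internal grouping; this contributes at most $\binom{|P|}{\le\eta|P|}\cdot B_{\eta|P|}\le |P|^{O(\eta|P|)}$ options per such part. Choosing $\eta=(\log\log n)^6/\log n$, Step~1 combined with (\ref{wxboundi}) gives a total anomalous count of $o(n/\log\log n)$ across all near-full parts, and the product of the per-part counts is $\exp(o(n/\log\log n)\cdot\log\log n)=\exp(o(n))$. For the remaining complex parts --- in which every overlap block has size at most $(1-\eta)|P|$ --- the same input shows their combined vertex support also has size $o(n/\log\log n)$, and a careful count of the restricted set-partition structure (respecting $\pi_1$-part boundaries, and using the edge-count constraint $d_i$) bounds their contribution by $\exp(o(n))$ as well. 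Multiplying through and taking $S'(n)$ to be the resulting uniform bound finishes the proof.

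The main obstacle is this last step, and in particular avoiding a per-complex-part factor of the full Bell number $B_a$, which would give $B_a^R=\exp(\Theta(n(\log\log n)^5))$ --- far too large. The remedy, applying (\ref{wxboundi}) at a carefully tuned threshold $x=1-\eta$ close to $1$ and splitting complex parts according to block-size structure, is the technical heart of the lemma: the Case III hypothesis is needed precisely in order that both the anomalous vertex count in near-full parts and the total vertex support of non-near-full parts can be simultaneously driven below $o(n/\log\log n)$.
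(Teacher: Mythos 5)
Your plan is sound and is in essence the same argument the paper uses: exploit the negation of conditions I and II to split into a ``tiny $v_i$'' case (where the overlap vertices are so few that even a crude container argument gives $\exp(o(n))$) and a ``dense $\beta_i$'' case (where \eqref{wxboundi} at a threshold $x$ close to $1$ shows that almost all overlap vertices sit in near-full blocks, so only the small-block/anomalous vertices need to be grouped). The paper organises this as a sequential vertex selection (choose parts containing large blocks, then the $o(n)$ non-large-block vertices inside them, then the $\hat w_x n = o(n)$ small-block vertices elsewhere, then sort into $a$ containers), whereas you organise it as a part-by-part classification (trivial/full/complex, then near-full vs.\ not), but the combinatorial content is identical.

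Two small slips, neither fatal: (i) your intermediate claim ``$R=O(n(\log\log n)^4/\log n)$'' fails in the tiny case (there $R_i\le v_i/2<n/(2(\log\log n)^2)$, which is asymptotically \emph{larger} than $n(\log\log n)^4/\log n$), but you never actually use this $R$ bound --- your accounting runs through the anomalous-vertex count and the $w_x$ bound, which are fine; and (ii) a trivial/full/complex classification costs $3^k$, not $2^k$, still $\exp(o(n))$. Also, the $d_i$ constraint is not needed in the final count for the non-near-full complex parts: once the overlap-vertex support there is shown to be $o(n/\log\log n)$ via \eqref{wxboundi}, the bound $a^{O(\text{support})}=\exp(o(n))$ on their groupings is enough, exactly as for the near-full parts.
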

\begin{proof}
We first split up the parts of size $a$ (if any such parts exist). Since Condition I does not hold, either $v_1 < \frac{n}{\parenth{\log \log n}^2}$ or $\beta_1 > 1-\frac{(\log \log n)^4}{\log n}$.

In the first case, select the $v_1 < \frac{n}{\parenth{\log \log n}^2}=o(n)$ vertices which form the overlap blocks in parts of size $a$. Using (\ref{keychoose}) from Section \ref{sectionkeyfacts}, there are at most
\[
 {n \choose v_1} \le {n \choose \left\lfloor \frac{n}{(\log\log n)^2} \right\rfloor} \le \exp(o(n))
\]
ways to do this. All the other vertices in parts of size $a$ must be singletons. To find out how the $v_1$ vertices are arranged into overlap blocks, we can proceed as in the proof of Lemma \ref{choppinguplemma1}: sort the $v_1$ vertices into $a$ containers, and let those vertices be in the same overlap block that are in the same container and in the same part of $\pi_1$. There are
\[
 a^{v_1} \le a^\frac{n}{(\log\log n)^2}= \exp\parenth{O \parenth{\frac{n}{\log\log n}}} \le  \exp (o(n))
\]
possibilities for this, so altogether there are $\exp (o(n))$ ways to split up the parts of size $a$ in the case $v_1 < \frac{n}{\parenth{\log \log n}^2}$.

In the second case, we have $\beta_1 > 1-\frac{(\log \log n)^4}{\log n}$. Let $x=1-\frac{(\log\log n)^2}{(\log n)^{1/2}}$, then by (\ref{wxboundi}), if $\pi_2$ overlaps with $\pi_1$ according to $v_i$ and $d_i$, then
\[
 w_{x,1} \le \frac{\frac{(\log \log n)^4}{\log n}}{\frac{(\log\log n)^2}{(\log n)^{1/2}}}= \frac{(\log\log n)^2}{(\log n)^{1/2}} =: \hat w_x\rightarrow 0.
\]
This means that almost all of the $v_1$ vertices in the overlap must be arranged into large overlap blocks of size greater than $x \left( a-1\right)+1$. As $x \rightarrow 1$, we can assume $x>2/3$. Therefore, any part of $\pi_1$ contains at most one such large overlap block, and we can group the vertices in parts of size $a$ into overlap blocks and singletons in the following way.
\begin{itemize}
 \item First we select the parts which contain large overlap blocks. There are at most
\[
 2^k = \exp \parenth{ O \left( n / \log n\right)} = \exp (o(n))
\]
choices.
\item Next, given these $k' \le k$ parts, we pick the vertices within the parts that are not in the large overlap blocks of size greater than $x \left( a-1\right)+1$. Since $x \rightarrow 1$, there are at most 
\[k'(a-x(a-1)-1)\le(1-x) a k'=o\parenth{a k'} = o(n)
\]
 such vertices. Therefore, there are at most
\begin{align*}
\sum_{l \le (1-x) a k'} {a k' \choose l} &\le \parenth{(1-x) a k'+1}\cdot{a k' \choose {\left\lfloor(1-x) a k'\right\rfloor}} \le n \cdot {n \choose (1-x)n} \le \exp(o(n))
\end{align*}
possibilities for this.
\item Now we know all the large overlap blocks in $V_1$. From the remaining vertices, we choose those vertices that are not singletons, i.e., which are in overlap blocks of size at least $2$, but not in big overlap blocks. There cannot be more than $\hat w_x v_1 \le \hat w_x n=o(n)$ such vertices. Therefore, there are at most
\begin{align*}
  \sum_{j \le\hat w_x n}{n \choose j} &\le  (\hat w_x n+1){n \choose {\left\lfloor w_x n\right \rfloor}} \le  \exp (o(n))
\end{align*}
choices.
\item We have determined all of the large overlap blocks and which of the remaining vertices are singletons and which are in overlap blocks. It only remains to group the vertices that are in overlap blocks into subsets of the parts of $\pi_1$. As in the proof of Lemma \ref{choppinguplemma1}, each such partition into subsets can be obtained by sorting the vertices into $a$ containers, and since there are at most $\hat w_x v_1 \le \hat w_x n$ vertices left, this can be done in at most
\[
 a^{\hat w_x n} = \exp \left(O \left( n \hat w_x \log \log n\right) \right) = \exp (o(n))
\]
 ways.
\end{itemize}
Multiplying everything, and noting that none of the bounds depend on the specific choice of $v_i$ and $d_i$, gives the bound $\exp(o(n))$ for the number of ways we can subdivide the parts of size $a$ in the second case, and hence in both cases.

The bound $\exp (o(n))$ for subdividing the parts of size at most $a-1$ can be proved analogously. Multiplying those two bounds gives $S'=S'(n)$ such that
\[
  S(v_1,v_2,d_1,d_2)\le S'\le\exp (o(n)).
\]
\end{proof}

\begin{lemma}
\label{extralemma1}
Fix $v_1$, $v_2$, $d_1$ and $d_2$ in such a way that I and II do not hold but III does. Let $v=v_1+v_2$ as before, and let
\[
 \mc{P}'(v_1,v_2,d_1,d_2) = \left \lbrace \pi_2 \in  \mc{P}^{\text{III}} \mid v_i(\pi_1,\pi_2)=v_i, d_i(\pi_1,\pi_2)=d_i, i=1,2\right \rbrace.
\]
Then 
\begin{align}
\sum_{\pi_2 \in \mc{P}'(r_1,r_2,d_1,d_2)} k^{-n} b^{d(\pi_1,\pi_2)}\le b^{n(1-\rho)\log_b (1-\rho) +\frac{v_1}{2} - \frac{\Delta v}{2}}\exp(o(n)),
\label{yahoo}
\end{align}
where the function which is implicit in the $o(n)$ term does not depend on our choice of $v_1$, $v_2$, $d_1$ or $d_2$.
\end{lemma}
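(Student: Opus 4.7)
The plan is to mirror the proof of Lemma \ref{lemmar2}, but with a sharper bound on the number of ways to reassemble the subdivided $\pi_1$ into $\pi_2$. Starting from the fixed $\pi_1$, I first subdivide its parts into the prescribed overlap blocks and singletons, invoking Lemma \ref{choppinguplemma3} for a uniform bound $S' = \exp(o(n))$ on the number of such subdivisions; then I count the ordered $k$-equipartitions $\pi_2$ compatible with each subdivision and multiply by $b^{d_1+d_2}$.

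The key observation is that in Case III, almost every vertex of the overlap lies in a \emph{large} block. Since neither Condition~I nor~II holds, the argument preceding Lemma \ref{choppinguplemma3} (with $x=1-(\log\log n)^2/(\log n)^{1/2}$) shows that for each $i\in\{1,2\}$ the proportion of $V_i$ in overlap blocks of size at most $x(a-i)+1$ is at most $\hat w_x = o(1)$. Hence the number $R^{\mathrm{small}}$ of ``small'' overlap blocks is $o(n)$, and the number $R^{\mathrm{large}}$ of ``large'' ones satisfies $R^{\mathrm{large}} \le (1+o(1))v/a$; moreover each large block has size exceeding $2a/3$, so no two fit in the same part of $\pi_2$.

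I then count $\pi_2$ in two stages. First, I place each of the $R^{\mathrm{large}}$ large blocks into its own part, giving at most $k^{R^{\mathrm{large}}} \le b^{v/2}\exp(o(n))$ choices by (\ref{key2}). After this step, roughly $\rho k$ parts are essentially full, leaving at most $(1-\rho)k + o(k)$ parts with substantial remaining capacity. Placing each of the $R^{\mathrm{small}}+(n-v)$ remaining items (small blocks and singletons) into one such part then yields at most
\[
((1-\rho)k+o(k))^{R^{\mathrm{small}}+n-v} \le (1-\rho)^{(1-\rho)n} k^{(1-\rho)n}\exp(o(n))
\]
possibilities, using that $\rho \le 1-c'$ is bounded away from $1$ and that $R^{\mathrm{small}}+n-v \ge n-v = (1-\rho)n$.

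Combining these two stages with the trivial bound $b^{d_1+d_2}\le b^{v_1(a-1)/2+v_2(a-2)/2} = k^v \cdot b^{v_1/2 - v(\Delta+1)/2}\exp(o(n))$ (using $a=\gamma-\Delta+1$ and (\ref{key2})), the $k$-factors telescope as $k^{-n}\cdot k^{(1-\rho)n}\cdot k^v = 1$, and the $b$-exponents collapse to $v_1/2 - \Delta v/2$, producing exactly the right-hand side of (\ref{yahoo}). Uniformity of the $\exp(o(n))$ errors in $v_1,v_2,d_1,d_2$ is automatic because each such term traces back to asymptotic quantities (such as $b^{\gamma/2}/k - 1$, $\hat w_x$, or $a/\gamma - 1$) that do not depend on those parameters. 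The main obstacle is justifying the two-stage placement bound rigorously: one must verify that the small items genuinely have only $(1+o(1))(1-\rho)k$ effective part choices rather than $k$, which relies crucially on Case III forcing $\rho$ bounded away from $1$ so that the $(1-\rho)k$ fresh parts absorb essentially all the remaining vertices.
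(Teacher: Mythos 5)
There is a genuine gap. After placing the large blocks, you bound the number of placements of the remaining small blocks and singletons by $\bigl((1-\rho)k+o(k)\bigr)^{R^{\mathrm{small}}+n-v}$ and assert this is at most $(1-\rho)^{(1-\rho)n}k^{(1-\rho)n}\exp(o(n))$. That step requires $k^{R^{\mathrm{small}}}=\exp(o(n))$, i.e.\ $R^{\mathrm{small}}=o(n/\log n)$, but you only established $R^{\mathrm{small}}=o(n)$, and in fact $o(n)$ is the best one can say. When, say, $v_1<n/(\log\log n)^2$ (which is allowed when Condition~I fails), the overlap in parts of size $a$ may consist entirely of pairs, giving $R^{\mathrm{small}}_1$ as large as $\Theta\bigl(n/(\log\log n)^2\bigr)$; similarly, when $\beta_i>1-(\log\log n)^4/\log n$ one only gets $R^{\mathrm{small}}_i=O\bigl(n(\log\log n)^2/\sqrt{\log n}\bigr)$. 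In either case $k^{R^{\mathrm{small}}}=\exp\bigl(\Theta(n\log n/(\log\log n)^2)\bigr)$ or larger, which overwhelms $\exp(o(n))$. (Your parenthetical ``using \dots\ $R^{\mathrm{small}}+n-v\ge n-v$'' also runs the wrong way: since the base $(1-\rho)k>1$, a lower bound on the exponent does not give an upper bound on the power.)

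The reason this factor is genuinely there to be dealt with, and not merely a lost constant, is that you simultaneously discarded the cancellation that would kill it. You replace $b^{d_1+d_2}$ by its maximum possible value $b^{v_1(a-1)/2+v_2(a-2)/2}$, i.e.\ you set $\beta_1=\beta_2=1$. The paper instead keeps the exact $d_i=\beta_iv_i(a-i)/2$, writes $b^{d_1+d_2}\le k^{\beta_1v_1+\beta_2v_2}b^{-\frac12(\Delta\beta_1v_1+(1+\Delta)\beta_2v_2)}\exp(o(n))$, and combines $k^{-v_i+\beta_iv_i}=k^{-(1-\beta_i)v_i}$ with the bound for $R_i$ from (\ref{eqforr1})--(\ref{eqforr2}); the $(1-\beta_i)v_i$ terms in that bound then enter with a strictly \emph{negative} coefficient and can be dropped, leaving only the harmless $v_i/(y\gamma-1)$ term. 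By throwing away the $\beta_i$'s you lose a factor of $k^{(1-\beta_1)v_1+(1-\beta_2)v_2}$, which is precisely what was supposed to absorb $k^{R^{\mathrm{small}}}$ (note $R^{\mathrm{small}}_i$ is of order $(1-\beta_i)v_i$). A secondary inaccuracy: your claim that every remaining item has only $(1-\rho)k+o(k)$ choices ignores the $o(n)$ vertices that go into the residual slots of the $\rho k$ nearly-full parts; the paper handles these separately with a $\binom{n}{o(n)}\le\exp(o(n))$ count and only then asserts that $(1-\rho+o(1))n$ of the items are restricted to $(1-\rho+o(1))k$ parts. To repair your proof you would essentially have to reintroduce the $\beta_i$-dependence and track the cancellation, which is what the paper's argument does.
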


\begin{proof}
Let $u =1-\frac{(\log \log n)^5}{\log n} \rightarrow 1$. Recall that $\rho=v/n=(v_1+v_2)/n$. 
\begin{claim*}
For any $\pi_2 \in \mc{P}'(v_1, v_2, d_1, d_2)$, there are $(1+o(1))\rho k$ `large' overlap blocks of size at least $u(a-2)$ in the overlap of $\pi_1$ and $\pi_2$.
\end{claim*}
\begin{proof}
Of course there are asymptotically at most $\frac{v}{u(a-2)} \sim \rho k$ such blocks, so we only need to show that there are asymptotically at least $\rho k$ of them.

Note that if $v_i \ge \frac{n}{\parenth{\log \log n}^2}$ for $i\in \lbrace 1,2 \rbrace$, then as I and II do not hold, $\beta_i >1-\frac{(\log\log n)^4}{\log n}$, and therefore,
\begin{equation}\frac{1-\beta_i}{1-u} \le \frac{1}{\log \log n}\rightarrow 0.\label{boundbetaandu}\end{equation}
If $\pi_2 \in \mc{P}'(v_1, v_2, d_1, d_2)$, then by (\ref{wxboundi}), there are at least
\[
 (1-w_{u,1})v_1+(1-w_{u,2})v_2 \ge \sum_{i=1}^2\left(1-\frac{1-\beta_i}{1-u}\right){v_i}
\]
vertices in large overlap blocks of size at least $u(a-2)$. Since no overlap block contains more than $a$ vertices, there are at least
\begin{equation}
 \sum_{i=1}^2\left(1-\frac{1-\beta_i}{1-u}\right)\frac{v_i}{a} \label{zwischen}
\end{equation}
such large overlap blocks. As III holds, $v_1+v_2 = v \ge cn$, so there can be at most one $i\in \lbrace 1,2 \rbrace$ with $v_i < \frac{n}{(\log \log n)^2}$. If this is the case and $j$ is the other element of $\lbrace 1,2\rbrace$, then $v_i \ll v_j \sim v \sim \rho n$, so together with (\ref{boundbetaandu}), (\ref{zwischen}) is 
\[
 o\parenth{\frac{n}{a}}+ \left(1-\frac{1-\beta_j}{1-u}\right)\frac{v_j}{a} =o(k)+(1+o(1))\rho\frac{n}{a} \sim \rho k,
\]
as $\frac{n}{a}\sim k$ by (\ref{key1}). Otherwise, if for both $i \in \lbrace 1,2\rbrace$, $v_i \ge \frac{n}{(\log \log n)^2}$, (\ref{zwischen}) and (\ref{boundbetaandu}) give
\[
 \sum_{i=1}^2\left(1-\frac{1-\beta_i}{1-u}\right)\frac{v_i}{a} \ge \left(1-\frac{1}{\log \log n} \right)\frac{v_1+v_2}{a} \sim \frac{v_1+v_2}{a} \sim \rho k.
\]
So in both cases, there are asymptotically at least $\rho k$ large overlap blocks of size at least $u\left(a-2\right)$. 
\end{proof}

We first subdivide the partition $\pi_1$ into overlap blocks and singletons according to $v_1$, $v_2$, $d_1$, $d_2$ (for which there are $\exp(o(n))$ choices by Lemma \ref{choppinguplemma3}), and then we generate all $\pi_2 \in \mc{P}'(v_1,v_2,d_1,d_2)$. Recall that $R$ was defined in (\ref{defofR}) as the total number of overlap blocks.
\begin{claim*}
 There are at most
\[
 (1-\rho)^{(1-\rho)n}k^{n-v+R} \exp(o(n)) 
\]
other ordered $k$-equipartitions with the given overlap blocks with the original partition $\pi_1$.
\end{claim*}
\begin{proof}
We sort the overlap blocks and singletons into $k$ parts to create a new ordered $k$-equipartition $\pi_2$, and start with the large sets of size at least $u\left(a-2\right)$. By the previous claim, there are $(1+o(1))\rho k$ of them, and each has at most $k$ choices. As $u\rightarrow 1$, we can assume $u>0.6$, so no two large overlap blocks can be assigned to the same part.

After we are finished with the large overlap blocks, the remaining vertices can either be sorted into the small remainder of the $(1+o(1))\rho k$ parts of $\pi_2$ which have been assigned a large block, or they can be sorted into the remaining $(1-\rho+o(1))k$ parts of $\pi_2$.

As $u \rightarrow 1$, we can fit at most $(1+o(1))\rho k  \left(a-u\left(a-2\right)\right)=o(n)$ vertices into the remainder of the parts of $\pi_2$ with large overlap blocks. Therefore, by (\ref{keychoose}) there are at most 
\[
{n \choose o(n)}\le \exp(o(n))
\]
 ways of picking these vertices, and for each there are at most $k$ choices for which part of $\pi_2$ it is assigned to.

There are now at least $n-v-o(n)=(1-\rho+o(1))n$ singletons and overlap blocks left to be assigned to the remaining $(1-\rho+o(1))k$ parts. For each of these there are at most $(1-\rho+o(1))k$ choices.

We have now sorted $R$ overlap blocks and $n-v$ singletons into the $k$ parts, and bounded the number of choices for each by at most $k$, and for $(1-\rho+o(1))n$ of them by $(1-\rho+o(1))k$. Therefore, in total there are at most
\[
 \parenth{1-\rho+o(1)}^{(1-\rho+o(1))n} k^{n-v+R}\le (1-\rho)^{(1-\rho)n}k^{n-v+R} \exp(o(n)) 
\]
ways to build a new partition $\pi_2$ from the given overlap blocks and singletons.
\end{proof}

Now as before, let $x= \frac{1}{4}$ and $y=1-\frac{1}{\log \log n}$. Then as in (\ref{calc1}), by Lemma \ref{choppinguplemma3}, (\ref{eqforr1}) and (\ref{eqforr2}), and since $R=R_1+R_2$,
\begin{align*}
&\sum_{\pi_2 \in \mc{P}'(v_1,v_2,d_1,d_2)} k^{-n} b^{d(\pi_1,\pi_2)}\\
&\le S(v_1,v_2,d_1,d_2)(1-\rho)^{(1-\rho)n}k^{-n+n-v+\sum_{i=1}^2 \parenth{\frac{2(1-\beta_i)v_i}{3} + \frac{4(1-\beta_i)v_i}{(1-y)\gamma-4}+\frac{v_i}{y\gamma-1}}} b^{d_1+d_2} \exp(o(n))\nonumber\\
& \le  (1-\rho)^{(1-\rho)n} k^{\sum_{i=1}^2 \parenth{-v_i+\frac{2(1-\beta_i)v_i}{3} + \frac{4(1-\beta_i)v_i}{(1-y)\gamma-4}+\frac{v_i}{y\gamma-1}} } b^{d_1+d_2}  \exp\left(o(n)\right).\nonumber 
\end{align*}
Note that as by (\ref{key2}), $b^{\frac{\gamma}{2}}\sim k$, and as $a=\left \lfloor \gamma \right \rfloor+1=\gamma-\Delta+1$,
\begin{align*}
b^{d_1+d_2}&= b^{\beta_1 v_1 \frac{a-1}{2}+\beta_2 v_2 \frac{a-2}{2}} =   b^{(\beta_1 v_1+\beta_2 v_2) \frac{\gamma}{2}-\frac{1}{2} \parenth{\Delta \beta_1 v_1+(1+\Delta)\beta_2 v_2}} 
\\
&\le k^{\beta_1 v_1+\beta_2 v_2} b^{-\frac{1}{2} \parenth{\Delta \beta_1 v_1+(1+\Delta)\beta_2 v_2}} \exp(o(n)).
\end{align*}
Since I and II do not hold, $v_i(1-\beta_i)=o(n)$ for $i=1,2$, and therefore,
\[
 b^{d_1+d_2} \le k^{\beta_1 v_1+\beta_2 v_2} b^{-\frac{1}{2} \parenth{\Delta v_1+(1+\Delta)v_2}} \exp(o(n)).
\]
Hence,
\begin{align*}
\sum_{\pi_2 \in \mc{P}'(v_1,v_2,d_1,d_2)} k^{-n} b^{d(\pi_1,\pi_2)}\le  &(1-\rho)^{(1-\rho)n} k^{-\sum_{i=1}^2 \parenth{v_i(1-\beta_i)\left(\frac{1}{3} -\frac{4}{(1-y)\gamma-4}\right)}} k^{\frac{v_1+v_2}{y\gamma-1}} \\
& \cdot b^{-\frac{1}{2} \parenth{\Delta v_1+(1+\Delta)v_2}} \exp\left(o (n)\right) \\
\le & (1-\rho)^{(1-\rho)n} k^{\frac{v_1+v_2}{y\gamma-1}}b^{-\frac{1}{2} \parenth{\Delta v_1+(1+\Delta)v_2}} \exp\left(o (n)\right)
\end{align*}
as $\frac{1}{3} -\frac{4}{(1-y)\gamma-4} >0$ because $(1-y)\gamma \rightarrow \infty$.
Since $\gamma \sim 2 \log_b n$ and $y \rightarrow 1$,
\begin{align*}
 k^{\frac{v_1+v_2}{y\gamma-1}}b^{-\frac{1}{2} \parenth{\Delta v_1+(1+\Delta)v_2}} &\le n^{\frac{v_1+v_2}{y\gamma-1}}b^{-\frac{1}{2} \parenth{\Delta v_1+(1+\Delta)v_2}} = b^{\frac{v_1+v_2}{2+o(1)}-\frac{1}{2} \parenth{\Delta v_1+(1+\Delta)v_2}}  \\
&\le b^{\frac{1-\Delta}{2} \cdot v_1 - \frac{\Delta}{2} \cdot v_2}\exp(o(n)).
\end{align*}
Hence,
\begin{align*}
\sum_{\pi_2 \in \mc{P}'(r_1,r_2,d_1,d_2)} k^{-n} b^{d(\pi_1,\pi_2)}\le b^{n(1-\rho)\log_b (1-\rho) +\frac{v_1}{2} - \frac{\Delta v}{2}}\exp(o(n)).
\end{align*}
\end{proof}
For the required special case where $\ftn(n)=\epsilon$ for all $n$, we are finally ready to show that the contribution from Case III to (\ref{conversion}) is $o(1)$.
\begin{lemma} If $\ftn(n)=\epsilon$ for all $n$ in the definition of $k$, then
\label{lemmar3}
\[\sum_{\pi_2 \in \mc{P}^{\text{III}}} k^{-n} b^{d(\pi_1,\pi_2)} \exp(o( n)) =o(1).\]
\end{lemma}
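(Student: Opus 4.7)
The plan is to apply Lemma \ref{extralemma1} to each admissible quadruple $(v_1, v_2, d_1, d_2)$ and sum over them. Since $v_i \le n$ and $d_i \le f = O(n \log n)$, there are only $O(n^2 (n \log n)^2)$ such quadruples, contributing a mere factor of $\exp(o(n))$. It therefore suffices to exhibit a constant $c'' = c''(\epsilon, c, c') > 0$ such that
\[
\Phi(y, \tau) \;:=\; y\log_b y + \frac{\tau}{2} - \frac{\Delta(1-y)}{2} \;\le\; -c'' + o(1)
\]
uniformly for $\pi_2 \in \mc{P}^{\text{III}}$, where $y := 1-\rho$ and $\tau := v_1/n$. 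From $c < \rho \le 1 - c'$ we get $y \in [c', 1-c)$, and by the computations in Section \ref{preliminariessection} the bound $\tau \le \min(\sigma, 1-y) + o(1)$ holds, where $\sigma := \max(0, \Delta - x_0 - \epsilon)$ is the normalized total size of the parts of $\pi_1$ of size $a = \left\lfloor \gamma \right\rfloor + 1$ (and vanishes when $\ceilnk < a$).

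Since $\Phi$ is linear and increasing in $\tau$, I would split into three regimes. First, if $\sigma = 0$, then $\tau = o(1)$ and $\Phi \le y\log_b y + o(1)$, bounded above by a negative constant on the compact interval $[c', 1-c] \subset (0,1)$. Second, if $\sigma > 0$ and $y \le 1 - \sigma$, substitute $\tau \le \sigma$ to obtain
\[
\Phi \;\le\; y\log_b y + \frac{\Delta y}{2} - \frac{x_0 + \epsilon}{2},
\]
which under the change of variables $Y = 1-y$ is exactly the expression in Lemma \ref{technicallemma4}; its hypothesis $\Delta - x_0 - \epsilon \le Y \le 1$ is equivalent to $y \le 1 - \sigma$, and the lemma delivers $\Phi \le -c_4$. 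Third, if $\sigma > 0$ and $y > 1 - \sigma$, use the trivial bound $\tau \le 1-y$ to get
\[
\Phi \;\le\; y\log_b y + \frac{(1-\Delta)(1-y)}{2} \;=\; \varphi\bigl(\Delta - (1-y)\bigr),
\]
and Lemma \ref{technicallemma3} applies with $\epsilon' = c$ (since $z := 1-y = \rho$ satisfies $c < z < \sigma$) to give $\Phi \le -c_3$. Combining the three regimes yields a uniform $c'' > 0$, and then Lemma \ref{extralemma1} produces $\sum_{\pi_2 \in \mc{P}^{\text{III}}} k^{-n} b^{d(\pi_1, \pi_2)} \exp(o(n)) \le b^{-c''n + o(n)} = o(1)$.

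The main obstacle is keeping the constant $c_4$ in Lemma \ref{technicallemma4} uniform in $n$: the natural choice $\epsilon' = \sigma$ degenerates as $\sigma \to 0$. I would handle this by splitting the second regime further at a small fixed threshold $\sigma_0 > 0$. For $\sigma \ge \sigma_0$, Lemma \ref{technicallemma4} applies with $\epsilon' = \sigma_0$ and gives a uniform $c_4$. For $0 < \sigma < \sigma_0$, I would invoke Lemma \ref{technicallemma2} --- applicable because $\sigma > 0$ forces $x_0 \le \Delta - \epsilon$ --- to deduce $\Delta \ge 1 - 2c_2/\log b > 0$. This makes the term $-\Delta(1-y)/2$ contribute a uniform negative amount that, once $\sigma_0$ is chosen small enough, dominates the tiny $\sigma/2 < \sigma_0/2$ and ensures $\Phi$ stays below $-c''$. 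The alignment between the case split at $y = 1-\sigma$, the tight choice of $\tau$, and the respective domains of Lemmas \ref{technicallemma3} and \ref{technicallemma4} is what makes everything uniform.
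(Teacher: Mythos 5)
Your reduction to bounding the exponent $\Phi(y,\tau)$ uniformly, and your use of Lemmas \ref{technicallemma3} and \ref{technicallemma4} after matching their domains to the case split, track the paper's argument closely. The gap is in your patch for the subregime $0 < \sigma < \sigma_0$: you claim Lemma \ref{technicallemma2} yields $\Delta \ge 1-2c_2/\log b > 0$, but the strict positivity of $1-2c_2/\log b$ requires $\log b > 2c_2$, which fails for $p \le 1-1/e^2$ (there $\log b \le 2$ while $c_2 = c_2(\epsilon)$ tends to $1$ as $\epsilon \to 0$). For such $p$ one can still have $\Delta > \epsilon$ and hence $\sigma > 0$, so this subregime is non-empty; yet the lower bound on $\Delta$ you lean on is vacuous. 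The claim as written is false, and the lemma must hold for all constant $p \in (0,1)$.

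The uniformity you need is recoverable by a different argument: $\sigma > 0$ directly forces $\Delta > x_0 + \epsilon \ge \epsilon$, so $-\Delta(1-y)/2 \le -\epsilon c/2$ without any appeal to Lemma \ref{technicallemma2}; or, more simply still, discard that term and use that $y\log_b y$ alone is bounded above by a negative constant on the compact interval $[c',1-c]$, which dominates $\sigma_0/2$ once $\sigma_0$ is chosen small enough. It is worth noting how the paper sidesteps the issue entirely by placing its Case~1/Case~2 boundary at $\sigma = \Delta\rho$ rather than at $\sigma = 0$: in its Case 1 the bound $\tau \le \Delta\rho + o(1)$ makes the $\Delta$-dependent terms in the exponent cancel exactly, leaving just $y\log_b y + o(1)$, while in its Case 2 the constraint $\Delta\rho \le \sigma$ combined with $\rho > c$ and $\Delta \ge \epsilon$ immediately gives the fixed lower bound $\sigma \ge c\epsilon$ that Lemma \ref{technicallemma4} requires, so no further sub-split is needed.
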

\begin{proof}
We will prove that (\ref{yahoo}) is exponentially decreasing in $n$ and will distinguish three cases, depending how large $\Delta-x_0-\epsilon$ is in comparison to $\rho$. Note that 
\begin{equation}
\label{equationnk}
\frac{n}{k}=\gamma-x_0-\epsilon+o(1) = \left \lfloor \gamma \right \rfloor +\Delta-x_0-\epsilon+o(1)=a-1+\Delta-x_0-\epsilon+o(1).
\end{equation}
Roughly speaking, $\Delta-x_0-\epsilon$ is the proportion of parts of size $a$ in a $k$-equipartition, and we need to distinguish between Case 1 where there are few (or no) such parts, Case 2 where there are more such parts but still not so many that all of the $v=\rho n$ vertices in the overlap can be in parts of size $a$, and finally Case 3 where there are enough parts of size $a$ that the overlap blocks between $\pi_1$ and $\pi_2$ can all be in parts of size $a$ in $\pi_1$.  In the first case, we shall only need the condition that $c<\rho<1-c'$; the second and third cases are where condition (\ref{aste}) from Theorem \ref{maintheorem} is crucial.

\begin{itemize}
\item \textbf{Case 1:} $\Delta-x_0-\epsilon<\Delta\rho$.

 If $\frac{n}{k} \le a-1$, then there are no parts of size $a$ in $\pi_1$. 
If $\frac{n}{k} > a-1$, then by (\ref{keyceilnk}), $\ceilnk=a$. Also, $\frac{n}{k}=a-1+\Delta-x_0-\epsilon+o(1)\le a-\epsilon+o(1)<a$, so $\floornk=a-1$. Recall that by (\ref{keyequipartition}) in Section \ref{sectionkeyfacts}, $k_1=\delta k$ where $\delta= \frac{n}{k}-\floornk$. Therefore, if $\frac{n}{k} > a-1$, it follows from (\ref{equationnk}) that $\delta= \Delta-x_0-\epsilon+o(1) \le \Delta \rho+o(1)$. Hence in this case there are $k_1=\delta k \le \Delta\rho k + o(k)$ parts of size $a$ in $\pi_1$, so $v_1\le k_1a \le \Delta \rho n+o(n)$.

In both cases, from (\ref{yahoo}),
 \begin{align}
\sum_{\pi_2 \in \mc{P}'(r_1,r_2,d_1,d_2)} k^{-n} b^{d(\pi_1,\pi_2)}&\le b^{n(1-\rho)\log_b (1-\rho) +\frac{\Delta \rho n}{2} - \frac{\Delta}{2}  \rho n}\exp(o(n))\nonumber\\
&=  b^{n(1-\rho)\log_b (1-\rho) }\exp(o(n))\le b^{-c_6n} \exp(o(n)),\label{part0yahoo}
\end{align}
where $c_6 := \min\parenth{-(1-c) \log (1-c), -c' \log c'}>0$, since $c <\rho \le 1-c'$.

 \item \textbf{Case 2:} $\Delta \rho \le \Delta-x_0-\epsilon \le \rho$.

As $\rho >c$ and $\Delta \ge \epsilon+x_0+\Delta\rho\ge \epsilon$, we have that  $\Delta \rho \ge c \epsilon$. Therefore, by (\ref{equationnk}) and as $\Delta \le 1$,
\[
a-1+c\epsilon +o(1) \le \frac{n}{k}  \le a-\epsilon+o(1).
\]
In particular, $\floornk=a-1$. By (\ref{keyequipartition}) and (\ref{equationnk}), $\pi_1$ has $k_1=\delta k=(\Delta-x_0-\epsilon+o(1))k$ parts of size $a$. Therefore, $v_1$ can be at most $(\Delta-x_0-\epsilon+o(1)) k a$, and as $ka \sim n$,
\begin{align*}
\frac{v_1}{2}-\frac{\Delta v}{2}&\le  \frac{\Delta-x_0-\epsilon}{2}  n -\frac{\Delta}{2} \rho n+o(n) = n \left( \frac{\Delta}{2}\left(1-\rho \right)-\frac{x_0+\epsilon}{2}\right)+o(n).
\end{align*}
Hence, by (\ref{yahoo}),
\begin{align*}
\sum_{\pi_2 \in \mc{P}'(r_1,r_2,d_1,d_2)} k^{-n} b^{d(\pi_1,\pi_2)}\le b^{n\left((1-\rho)\log_b (1-\rho) +\frac{\Delta}{2}\left(1-\rho \right)-\frac{x_0+\epsilon}{2}\right)}\exp(o(n)).
\end{align*}
As remarked above, $\Delta \rho \ge c\epsilon$, so $\Delta - \epsilon-x_0 \ge c\epsilon$. Therefore, we can apply Lemma \ref{technicallemma4} with $\epsilon'=c\epsilon$ and $c_4 = c_4 (\epsilon, c\epsilon)$ to conclude that
\begin{align}
\sum_{\pi_2 \in \mc{P}'(r_1,r_2,d_1,d_2)} k^{-n} b^{d(\pi_1,\pi_2)}&\le b^{-c_4 n} \exp(o(n)). \label{part2yahoo}
\end{align}
Note that the proof of Lemma \ref{technicallemma4} requires Lemma \ref{technicallemma3}, which in turn uses condition (\ref{aste}) from Theorem \ref{maintheorem}.

 \item \textbf{Case 3:} $\Delta-x_0- \epsilon> \rho$.

Noting that $v_1+v_2=v=\rho n$, we proceed from (\ref{yahoo}).
\begin{align*}
\sum_{\pi_2 \in \mc{P}'(r_1,r_2,d_1,d_2)} k^{-n} b^{d(\pi_1,\pi_2)}&\le b^{n(1-\rho)\log_b (1-\rho) +\frac{1-\Delta}{2} v}\exp(o(n))\\
&= b^{n  \left((1-\rho)\log_b (1-\rho) +\frac{1-\Delta}{2}\rho \right)} \exp(o(n)).
\end{align*}
Since $c\le \rho \le \Delta-x_0-\epsilon$, we can use Lemma \ref{technicallemma3} (the proof of which uses condition (\ref{aste})) with $\epsilon'=c$ to see that this expression is exponentially decreasing in $n$.
\begin{align}
\sum_{\pi_2 \in \mc{P}'(r_1,r_2,d_1,d_2)} k^{-n} b^{d(\pi_1,\pi_2)}&\le b^{- {c_3} n} \exp(o(n)). \label{part1yahoo}
\end{align}

\end{itemize}
By (\ref{part0yahoo}), (\ref{part2yahoo}) and (\ref{part1yahoo}), if we let $c_7 = \min (c_3, c_4, c_6) >0$, then 
\begin{align*}
\sum_{\pi_2 \in \mc{P}'(r_1,r_2,d_1,d_2)} k^{-n} b^{d(\pi_1,\pi_2)}&\le b^{-c_7 n} \exp(o(n)).
\end{align*}
Since there are only $O(n^4 \log^2 n)$ choices for the values of $v_i \le n$ and $d_i \le f = O(n\log n)$ for $i=1,2$, this implies
\begin{equation*}
\sum_{\pi_2 \in \mc{P}^{\text{III}} } k^{-n} b^{d(\pi_1,\pi_2)} \exp(o(\log n)) =o(1).
\end{equation*}
\end{proof}
From Lemmas \ref{lemmar2} and \ref{lemmar3} together with (\ref{conversion}), it follows that if $\ftn(n) = \epsilon$ for all $n$, then
\[\sum_{\bfr \in \mc{R}_2^{c'}} Q_{\bfr} b^d=o(1),\]
as required.

\subsection{Very high overlap}
\label{upper}
We are left with those overlap sequences $\bfr$ where $\rho =v/n>1-{c'}$ and $\sum_{2 \le i \le 0.6\gamma} i r_i \le {c'} n$ for any constant ${c'}\in(0,1)$ of our choosing. This means that all but at most $c'n$ vertices are involved in the overlap, and of those vertices involved in the overlap, all but at most $c'n$ are in large overlap blocks of size at least $0.6 \gamma$.  Roughly speaking, in this case the large overlap blocks are mostly just permuted amongst themselves, and there are a small number of exceptional vertices which need to be studied in more detail. Let
\[
 \mc{R}_3^{c'} = \left\lbrace \mathbf{r} \mid \rho > 1-{c'}, \sum_{2 \le i \le 0.6\gamma} i r_i \le {c'} n\right\rbrace.
\]
We will show that if we pick ${c'} >0$ small enough and if $\ftn(n)=\epsilon$ for all $n$, then the contribution from $\mc{R}_3^{c'}$ to the sum (\ref{thesum}) is $o(1)$.  We will pick ${c'}>0$ later in this section, and to ensure this is not circular, we will take care that none of the implicit constants in our $O$-notation depend on~${c'}$.

As in the previous section, let $\pi_1$ be an \emph{arbitrary fixed ordered $k$-equipartition}. Recall that for an overlap sequence $\bfr$, we denote by $P'_{\mathbf{r}}$ the number of ordered $k$-equipartitions with overlap sequence $\mathbf{r}$ with $\pi_1$, and that by (\ref{QandP}), $Q_{\mathbf{r}} =\frac{P'_{\mathbf{r}}}{P}  $. 
Let 
\[
\mc{P}_3 = \left\lbrace  \text{ordered $k$-equipartitions $\pi_2$ such that } \bfr (\pi_1,\pi_2) \in \mc{R}_3^{c'} \right \rbrace,
\]
and recall that $\mu_k=Pq^f$ by (\ref{firstmoment}). Then
\begin{equation}
 \sum_{\bfr \in \mc{R}_3^{c'}} Q_{\bfr} b^d =  \sum_{\bfr \in \mc{R}_3^{c'}} \frac{P'_{\mathbf{r}}}{P}  b^{d} = \frac{b^f}{P} \sum_{\bfr \in \mc{R}_3^{c'}} P'_{\mathbf{r}}  b^{d-f}= \frac{1}{\mu_k} \sum_{\bfr \in \mc{R}_3^{c'}} P'_{\mathbf{r}}   b^{-(f-d)}=\frac{1}{\mu_k} \sum_{\pi_2 \in  \mc{P}_3} b^{-(f-d(\pi_1, \pi_2))}\label{bigfinal},
\end{equation}
where $d(\pi_1, \pi_2):= d(\bfr)$ if $\bfr$ is the overlap sequence of $\pi_1$ and $\pi_2$.

Starting with $\pi_1$, we will generate, and count the number of choices for, $\pi_2 \in  \mc{P}_3$. Since $v=\rho n \ge (1-{c'})n$ and $\sum_{2 \le i \le 0.6\gamma} i r_i \le {c'} n$, most of the overlap between $\pi_1$ and $\pi_2$ consists of large overlap blocks which are merely permuted. More specifically, given $\pi_2 \in \mc{P}_3$, we call an overlap block \emph{large} if it contains at least $0.53 \gamma$ vertices, and let
\begin{align*}
L =& \text{ set of large overlap blocks of size at least $0.53 \gamma$}. \\
\intertext{No part of $\pi_1$ can contain more than one large overlap block, and some parts may not contain any large overlap block at all. It will be more important later to talk about the latter type of part, so given $\pi_2 \in \mc{P}_3$, let}
T =&  \text{ set of parts of $\pi_1$ containing no large overlap block.}\\
\intertext{We call a vertex \textit{exceptional} if it is either not in the overlap at all or not in a large overlap block. If $\pi_2 \in \mc{P}_3$, then by definition there are at most $2 {c'} n$ exceptional vertices. We shall distinguish between \emph{three types of exceptional vertices}.  Again given $\pi_2 \in \mc{P}_3$, let
}
S =& \text{ set of exceptional vertices }\\
 S_1 =& \text{ set of exceptional vertices not in parts in $T$, i.e., in parts containing a large overlap}\\
& \text{ block}\\
S_2 =& \text{ set of exceptional vertices in parts  in $T$ which are either not in the overlap at all or}\\
&\text{ in overlap blocks of size at most $\constcone$}\\
S_3 =& \text{ set of exceptional vertices in parts in $T$ which are in overlap blocks of size greater}\\
& \text { than $\constcone$}\\
g =& \text{ number of overlap blocks of vertices in $S_3$.}
\end{align*}
Let $s=|S|$, $s_i=|S_i|$, and $t=|T|$.  Then, as the vertices in parts in $T$ are exactly those in $S_2 \cup S_3$, and since by (\ref{keyceilnk}), $a-3-\epsilon \le \floornk\le \ceilnk\le a$,
\begin{equation}
\label{equfort}
\frac{s_2+s_3}{a} \le t \le \frac{s_2+s_3}{{a-3-\epsilon}}.
\end{equation}
The vertices in $S_3$ are arranged in blocks of size between $\constcone$ and $0.53\gamma$, so
\begin{equation}
\label{equforg}
 \frac{s_3}{0.53\gamma} \le g \le \frac{s_3}{\constcone}.
\end{equation}
Fix $\mathbf{s}=(s_1,s_2,s_3)$, $g$, and $t$ such that $s= s_1+s_2+s_3 \le 2{c'} n$ and (\ref{equfort}) and (\ref{equforg}) hold, and let
\[
  \mc{P} (\mathbf{s}, t,g) =  \big\lbrace \pi_2 \in  \mc{P}_3 \mid \mathbf{s}(\pi_1,\pi_2)=\mathbf{s}, \,\, t(\pi_1,\pi_2)=t, \,\, g(\pi_1,\pi_2)=g\big\rbrace.
\]
Note that
\begin{equation}
\label{p3andptgs}
   \mc{P}_3 = \bigcup_{\mathbf{s}, t, g: s \le 2 {c'} n} \mc{P} (\mathbf{s}, t,g).
\end{equation}
Starting with the fixed partition $\pi_1$ and given $\mathbf{s}$, $g$, $t$, we will generate all $\pi_2 \in   \mc{P} (\mathbf{s}, t,g)$ and sum $b^{-(f-d(\pi_1, \pi_2))}$ to bound the contribution to (\ref{bigfinal}). We will proceed in the following way: first, we choose all three sets of exceptional vertices, bounding the number of choices in Lemma \ref{Part3Lemma1}. Next, we generate $\pi_2$ by permuting the exceptional vertices amongst themselves and then permuting all the parts, taking into account that part sizes may vary between $\ceilnk$ and $\floornk$. The number of ways to generate $\pi_2$ in this way is bounded in Lemma \ref{Part3Lemma2}. Finally, in Lemma \ref{Part3Lemma3}, we will examine how much each exceptional vertex of each type subtracts from the maximum possible number $f$ of shared forbidden edges between $\pi_1$ and $\pi_2$, and we obtain a lower bound for $f-d(\pi_1, \pi_2)$ which will be used afterwards to bound $b^{-(f-d(\pi_1, \pi_2))}$ from above.

\begin{lemma}
 \label{Part3Lemma1}
For fixed $\mathbf{s}=(s_1,s_2,s_3)$, $g$ and $t$ and the fixed partition $\pi_1$, there are at most 
\[
\frac{n^{s_1} k^t 2^{s_3}t^{g}a^{s_3}}{s_1!g!} 
\]
ways to choose the sets $S_1$, $S_2$ and $S_3$ and arrange the vertices in $S_3$ into $g$ overlap blocks.
\end{lemma}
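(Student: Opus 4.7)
The plan is to build the configuration by picking, in order, the set $S_1$, the set $T$, and then $S_3$ together with its partition into $g$ overlap blocks; the set $S_2$ needs no separate enumeration, because every vertex of a part in $T$ is exceptional, so $S_2$ is forced to be the set of vertices of parts in $T$ that are not in $S_3$. The first two steps are routine: there are at most $\binom{n}{s_1} \le n^{s_1}/s_1!$ choices for $S_1$, and at most $\binom{k}{t} \le k^t$ choices for $T$. I deliberately use the crude bound $k^t$ rather than $k^t/t!$, so as to match the form of the stated estimate (the extra slack is harmless here).

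The main work is in the third step: once $T$ is fixed, we must count the number of ways to pick $s_3$ vertices from the parts in $T$ and partition them into $g$ unordered overlap blocks, each lying entirely within a single part of $\pi_1$ and of size in $(\constcone, 0.53\gamma]$. My plan is to enumerate \emph{ordered} versions of this data and divide by $g!$ at the end to undo the block labelling. Concretely, I would (i) pick an ordered composition $(n_1, \ldots, n_g)$ of $s_3$ into positive parts, of which there are $\binom{s_3-1}{g-1} \le 2^{s_3}$; (ii) assign each of the $g$ ordered blocks to a part of $T$, with repetitions allowed since two blocks may share a part, giving $t^g$ choices; and (iii) for each block $j$ pick its underlying subset of size exactly $n_j$ inside its assigned part, which by (\ref{keyceilnk}) of Section \ref{sectionkeyfacts} contains at most $a$ vertices, so the number of such subsets is bounded by $\binom{a}{n_j} \le a^{n_j}$, and hence by $\prod_j a^{n_j} = a^{s_3}$ in total.

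Multiplying the three factors in (i)--(iii) and dividing by $g!$ yields $2^{s_3} t^g a^{s_3}/g!$ for the third step; combining with the first two bounds produces the required $\frac{n^{s_1} k^t 2^{s_3} t^g a^{s_3}}{s_1! g!}$. The estimate is intentionally slack in several places --- we ignore the disjointness of blocks sharing a common part, the sharper constraint $n_j \in (\constcone, 0.53\gamma]$, and the fact that parts of size $\floornk$ contain strictly fewer than $a$ vertices --- but all of these omissions merely inflate the count, which is exactly what the lemma demands. The only real subtlety is choosing this particular decomposition: splitting the bookkeeping into ``compositions'' times ``part labels'' times ``vertex subsets'' is what produces the factors $2^{s_3}$, $t^g$, $a^{s_3}$ with a single $g!$ in the denominator; more natural-looking decompositions (e.g.\ first selecting $S_3$ as a set and then partitioning it via a Stirling number) yield expressions with $s_3!$ or $g^{s_3}$ that do not match the target.
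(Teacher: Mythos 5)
Your proof is correct and follows essentially the same route as the paper: choose $S_1$ and $T$ via $\binom{n}{s_1}\binom{k}{t}$, then build $S_3$ and its blocks by picking an ordered composition of $s_3$ into $g$ parts, assigning blocks to parts in $T$, selecting the vertices (at most $a^{s_3}$ ways), and dividing by $g!$ to discount the ordering, with $S_2$ then determined. Your justification of the $a^{s_3}$ factor via $\prod_j\binom{a}{n_j}\le a^{s_3}$ is a touch more explicit than the paper's, but the decomposition and all the bounds are the same.
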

\begin{proof}
We first choose the vertices in $S_1$ and the parts in $T$. For this there are at most
\begin{equation*}
{n \choose s_1}{k \choose t} \le \frac{n^{s_1} k^t }{s_1!}
\end{equation*}
possibilities. Next, we pick the vertices in $S_3$ from within the parts in $T$ along with the $g$ overlap blocks they make up. Since we do not know the exact sizes of these overlap blocks, we first write $s_3$ as an ordered sum of $g$ positive summands, which can be done in ${s_3-1 \choose g-1}$ ways. Next, we decide which of the parts in $T$ each of the $g$ blocks is in, for which there are at most $t^g$ choices, and then we pick the vertices that belong to each of the $g$ blocks. We know which part of size at most $a$ each such vertex is in, and we choose $s_3$ vertices in total, so there are at most $a ^{s_3}$ possibilities for this. Finally, since we do not care about the order of the $g$ overlap blocks, we can divide by $g!$. So overall, there are at most
\begin{equation*}
{s_3-1 \choose g-1} t^g a ^{s_3} \frac{1}{g!} \le \frac{2^{s_3}t^{g}}{g!} a^{s_3}
\end{equation*}
ways of selecting the vertices in $S_3$ along with the $g$ overlap blocks they are arranged in. The remaining vertices in the parts in $T$ must be exactly those in $S_2$. 
\end{proof}
Let  
\begin{equation}
\tau = \max \left( 1, \Gamma\left(\frac{s_2}{t}\right)^t\right), \label{defoftau}
\end{equation}
where $\Gamma(\cdot)$ denotes the gamma function.
\begin{lemma}
 \label{Part3Lemma2}
Given $\pi_1$, $S_1$, $S_2$, $S_3$ and the overlap blocks that the vertices in $S_3$ are arranged in, there are at most 
\[
 {k \choose k_1} \frac{(s_1+s_2+g)!}{\tau} k_1! k_2!
\]
possibilities for $\pi_2$.
\end{lemma}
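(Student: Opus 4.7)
The plan is to construct every $\pi_2 \in \mc{P}(\mathbf{s}, t, g)$ in two stages, bound the number of choices at each stage, and multiply. The first stage is a skeletal matching of the $k$ parts of $\pi_1$ to the $k$ positions of the ordered $\pi_2$. The intention is that the large overlap block of each non-$T$ part of $\pi_1$ is sent to its matched $\pi_2$-position, while $T$-parts are matched freely to the remaining positions. Since the first $k_1$ positions of $\pi_2$ have size $\ceilnk$ and the last $k_2$ have size $\floornk$, I describe the matching in three sub-steps: pick which $k_1$ of the $k$ parts of $\pi_1$ map into the first $k_1$ $\pi_2$-positions, giving $\binom{k}{k_1}$ choices; permute these $k_1$ parts among those positions, giving $k_1!$; and permute the remaining $k_2$ parts among the last $k_2$ positions, giving $k_2!$. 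Thus at most $\binom{k}{k_1} k_1! k_2!$ matchings arise, any infeasible ones (for instance where a large block of size $\ceilnk$ would end up in a size-$\floornk$ slot) only inflating the count, which is harmless for an upper bound.

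The second stage distributes the exceptional material into the slots of $\pi_2$ not yet occupied by large blocks. There are $s_1+s_2$ individual vertices (from $S_1 \cup S_2$) and $g$ fixed overlap blocks from $S_3$, for a total of $s_1+s_2+g$ ``objects'' to place. I linearly order these objects, giving $(s_1+s_2+g)!$ orderings, and apply a canonical fill-in: traverse the $\pi_2$-positions in the order specified by the matching and, for each position, consume the longest prefix of the remaining sequence that fits the leftover capacity. Every $\pi_2 \in \mc{P}(\mathbf{s}, t, g)$ is generated from at least one matching-ordering pair in this way, but each $\pi_2$ is generated by many orderings, because within each of the $t$ $T$-positions of $\pi_2$ the $S_2$-vertices may be freely permuted without changing the partition. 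Writing $x_1, \dots, x_t \ge 0$ for the numbers of $S_2$-vertices in the $T$-positions, so that $\sum_i x_i = s_2$, the overcount for a fixed $\pi_2$ is at least $\prod_i x_i!$. By convexity of $\log \Gamma$ and Jensen's inequality this product is bounded below by $\Gamma(s_2/t + 1)^t \ge \Gamma(s_2/t)^t$, and trivially by $1$, hence by $\tau$. So the number of $\pi_2$'s per matching is at most $(s_1+s_2+g)!/\tau$, and combining the two stages gives the claimed bound.

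The main obstacle is making the canonical fill-in precise enough to guarantee that every $\pi_2 \in \mc{P}(\mathbf{s}, t, g)$ is indeed produced by some matching-ordering pair, given the mild mismatch between the size data of $\pi_1$-parts and of $\pi_2$-positions (some care is needed here, and is the reason the $\binom{k}{k_1}$ factor, rather than a cleaner $k!$, is the natural quantity). A secondary technical point is the overcount lower bound: one must justify the convexity step, check that $\tau$ is still a valid lower bound when some of the $x_i$ vanish (handled by the $\max(1,\cdot)$ in the definition of $\tau$), and use the monotonicity of $\Gamma$ on $[1,\infty)$ to replace $\Gamma(s_2/t+1)^t$ by $\Gamma(s_2/t)^t$ for the clean bound.
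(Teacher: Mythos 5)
Your construction and its factor counting essentially mirror the paper's: you fold the paper's two factors $\binom{k}{k_1}$ (which parts change size) and $k_1!k_2!$ (permuting parts of each new size) into a single enumeration of $k!$ bijections from parts of $\pi_1$ to positions of $\pi_2$, and the $(s_1+s_2+g)!$ orderings of the exceptional objects are identical. The gap is in the overcount argument. You write $x_1,\ldots,x_t$ for ``the numbers of $S_2$-vertices in the $T$-positions'' and assert $\sum_i x_i = s_2$, but this is false: once you permute the exceptional objects and run the canonical fill-in, the vertices of $S_2$ (which are defined as exceptional vertices sitting in $\pi_1$'s $T$-parts) get scattered across all of $\pi_2$, and some of them land in leftover slots of parts that contain a large block. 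So $\sum_i x_i$ can be strictly smaller than $s_2$, and then Jensen applied to $\bar x \le s_2/t$ no longer produces the lower bound $\Gamma(s_2/t)^t$.

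The missing idea is the indirect accounting that makes the overcount work. One should bound from \emph{below} the number of \emph{singletons} (from $S_1\cup S_2$, not specifically $S_2$) that the fill-in places into the $T$-positions: the $t$ parts of $T$ in $\pi_1$ carry exactly $s_2+s_3$ exceptional vertices; after the size assignment their matched positions in $\pi_2$ have at least $s_2+s_3-t$ slots (each part may lose at most one slot); all of these slots must be filled by exceptional objects; and at most $s_3$ of the filling vertices lie inside the $g$ blocks. Hence at least $s_2-t$ singletons land in $T$-positions, giving $\sum_i x_i \ge s_2-t$, and Jensen then yields $\prod_i x_i! \ge \Gamma\bigl(\tfrac{s_2-t}{t}+1\bigr)^t = \Gamma(s_2/t)^t$. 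As a secondary point, ``monotonicity of $\Gamma$ on $[1,\infty)$'' is not correct ($\Gamma$ decreases on $[1,\,\approx 1.46]$); the inequality $\Gamma(y+1)\ge\Gamma(y)$ should instead be justified via $\Gamma(y+1)=y\,\Gamma(y)$ for $y\ge 1$, or one should work directly with $\Gamma(s_2/t)$ as the paper does rather than passing through $\Gamma(s_2/t+1)$.
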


\begin{proof}
Note that each part in $\pi_1$ and $\pi_2$ contains at most one large overlap block from $L$, since one such block occupies more than half of a part. Therefore, since we know $S_1$, $S_2$ and $S_3$, we also know $L$. In each part of $\pi_1$, there are a certain number of `slots' for exceptional vertices, with the rest of the part occupied by at most one block from $L$. The numbers of slots for exceptional vertices in parts of $\pi_2$ are essentially just a permutation of the numbers of slots in $\pi_1$, because the remainders of the parts in $\pi_2$ are again occupied by at most one block from $L$. However, as total part sizes vary between $\ceilnk$ and $\floornk$, the numbers of available slots in each part may also increase or decrease by $1$.

Therefore, starting with $\pi_1$, we can generate every possible partition $\pi_2$ in the following way. Each of the $k$ parts of $\pi_1$ contains a certain number of exceptional vertices. We first decide which of the $k$ parts will be of size $\ceilnk$ in $\pi_2$, for which there are ${k \choose k_1}$ possibilities. In each part, this may increase or decrease the number of available slots for exceptional vertices by $1$. We write the vertices in $S_1$ and $S_2$ along with the $g$ blocks comprising the vertices in $S_3$ as a list and permute them, which can be done in 
\[(s_1+s_2+g)!\]
ways. Now we divide up the list successively according to the number of available slots in each of the $k$ parts (discarding the cases where this is not possible because one of the $g$ blocks would have to be divided), and move the vertices from each division to the corresponding part. Finally, we permute all $k_1$ parts of (new) size $\ceilnk$ and all $k_2$ parts of (new) size $\floornk$, for which there are \[
                                                                                                                               k_1! k_2!
                                                                                                                              \]
possibilities, and re-order the parts so that those of size $\ceilnk$ come first, followed by those of size $\floornk$, yielding the new ordered $k$-equipartition $\pi_2$.

However, we have overcounted the number of ways to generate $\pi_2$: each possible partition $\pi_2$ was counted at least $\tau$ times, where $\tau$ is defined in (\ref{defoftau}). To see this, suppose we have generated a partition $\pi_2$. Note that the number of available slots for exceptional vertices in the parts in $T$ is at least $s_2+s_3-t$, since there were initially $s_2+s_3$ exceptional vertices in the parts in $T$, and at most $t$ slots can be `lost'. So at least $s_2+s_3-t$ vertices were moved to the available slots in $T$, and of these, at most $s_3$ were in one of the $g$ overlap blocks. Therefore, there were at least  $s_2-t$ vertices which were permuted and then moved to the parts in $T$ as singletons. Denote the number of such singletons assigned to each of the parts in $T$ by $l_1$, $l_2$, ..., $l_t$, where $\sum_{i=1}^t l_i \ge s_2-t$. Then, since we do not care about the order of the vertices within the parts, we counted $\pi_2$ at least $\prod_{i=1}^t l_i!$ times.

Note that $l_i! = \Gamma (l_i+1)$, where $\Gamma (\cdot)$ denotes the gamma function. By the Bohr--Mollerup Theorem (see for example §13.1.10 in \cite{handbookcomplex}), $\log \Gamma (\cdot)$ is a convex function on the positive reals, so from Jensen's inequality,
\[\log \parenth{\prod_{i=1}^t l_i!} = \sum_{i=1}^t \log (\Gamma(l_i+1))\ge t \log \parenth{\Gamma\left(\frac{1}{t}\sum_{i=1}^t l_i+1\right)}, \]
and therefore $\prod_{i=1}^t l_i!\ge\Gamma\left(\frac{s_2-t}{t}+1\right)^t= \Gamma\left(\frac{s_2}{t}\right)^t$. Hence, we may divide our result by $\tau$.
\end{proof}

\begin{lemma}
 \label{Part3Lemma3} If $\pi_2 \in \mc{P} (\mathbf{s}, t,g)$, then
\[
 f-d(\pi_1, \pi_2) \ge 0.53 {\gamma} s_1 +  \left( \gamma/2-51 \right)s_2+0.23 {\gamma} s_3.
\]
\end{lemma}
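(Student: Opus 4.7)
My plan is to recognise $f - d(\pi_1,\pi_2)$ as the number of vertex pairs lying in the same part of $\pi_1$ but in different parts of $\pi_2$ (call these \emph{non-shared edges}), and lower-bound this count by exhibiting enough non-shared edges incident to the exceptional vertices of each type, being careful to avoid double-counting across types.

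First I would handle $S_1$. For $v \in S_1$, the $\pi_1$-part $P$ containing $v$ also contains a large overlap block $B$ with $|B| \ge 0.53\gamma$, and $v \notin B$. Since $B$ is an overlap block disjoint from $p_2(v)$, all $|B|$ edges from $v$ to $B$ are non-shared. These edges are pairwise distinct for distinct $v \in S_1$ (different left endpoints), and their other endpoints lie in large overlap blocks and hence never in $S_2 \cup S_3$. This contributes at least $0.53\gamma\,s_1$ non-shared edges.

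Next I would bound the contribution from $S_2$ and $S_3$ jointly, by counting non-shared edges inside the parts in $T$. A part $P \in T$ contains no large overlap block, so \emph{every} vertex of $P$ is exceptional and lies in $S_2 \cup S_3$. For any $v$ in such a $P$, the non-shared edges at $v$ inside $P$ are precisely the edges from $v$ to $P \setminus O(v)$, where $O(v) = p_1(v) \cap p_2(v)$ is $v$'s overlap block (or $\{v\}$ itself if $v$ is a singleton). Summing $|P(v)| - |O(v)|$ over $v \in S_2 \cup S_3$ counts each such non-shared edge exactly twice, so the total number of non-shared edges inside $T$-parts equals
\[
\tfrac{1}{2}\sum_{v \in S_2 \cup S_3}\bigl(|P(v)| - |O(v)|\bigr).
\]
Combining $|P(v)| \ge a - 3 - \epsilon = \gamma - O(1)$ from (\ref{keyceilnk}) with $|O(v)| \le \constcone$ for $v \in S_2$, and $|O(v)| < 0.53\gamma$ for $v \in S_3$ (the latter because $P \in T$ precludes large blocks), yields after halving a contribution of at least $(\gamma/2 - 51)\,s_2 + 0.23\gamma\,s_3$ for $n$ large enough.

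The two classes of counted edges are disjoint, since the $S_1$-edges lie in $\pi_1$-parts outside $T$ while the $S_2,S_3$-edges lie in parts inside $T$. Adding the two lower bounds gives the claim. The only genuine bookkeeping difficulty---and the main obstacle---is avoiding double-counting between $S_2$ and $S_3$, which can easily happen since both types live in the same $T$-parts; my remedy is to analyse them jointly rather than separately, absorbing the double-counting into the single factor $\tfrac{1}{2}$ above.
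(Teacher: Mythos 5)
Your proof is correct and follows essentially the same route as the paper's: identify $f-d(\pi_1,\pi_2)$ with $|E|$, the number of pairs in the same $\pi_1$-part but different $\pi_2$-parts; charge $\ge 0.53\gamma$ such pairs with a non-exceptional other endpoint to each $v\in S_1$; and bound the $S_2\cup S_3$ contribution by halving the sum of $|P(v)|-|O(v)|$, noting that every vertex in a $T$-part is itself in $S_2\cup S_3$. The paper phrases the $S_2\cup S_3$ step vertex-by-vertex and then divides by two, which is the same computation as your joint sum. (Both your version and the paper's arrive, if one tracks constants exactly, at something like $\gamma/2-51.5-\epsilon/2$ for the $s_2$-coefficient rather than the stated $\gamma/2-51$; this slack is immaterial since the lemma is only ever used with the additive constant absorbed into an $\exp(O(s))$ factor.)
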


\begin{proof}
Note that the number $d(\pi_1, \pi_2)$ of shared forbidden edges is exactly the number of pairs of vertices which  are in the same part in both $\pi_1$ and $\pi_2$, and $f$ is the number of pairs of vertices which are in the same part of $\pi_1$. Therefore, if we let
\[
E= \big\lbrace \left \lbrace v,w\right \rbrace \mid \text{ $v$ and $w$ are in the same part of $\pi_1$ but in different parts of $\pi_2$}\big \rbrace,
\]
then $f-d(\pi_1, \pi_2)=|E|$. Each exceptional vertex $v \in S$ contributes at least a certain amount to $|E|$ according to its type.

If $v$ is in $S_1$, then $v$ is in a part of $\pi_1$ which contains a large overlap block. Therefore, there are at least $0.53 \gamma $ vertices $w \notin S$ such that $\lbrace v,w \rbrace \in E$. Therefore, the contribution from $S_1$ to $|E|$ is at least $0.53 \gamma s_1$.

Since the vertices in $S_2$ are in overlap blocks of size at most $\constcone$, by (\ref{keyceilnk}), for each $v \in S_2$, there are at least $\floornk -\constcone \ge \gamma-\epsilon-103$ vertices $w$ such that $\lbrace v,w \rbrace \in E$. As the vertices in $S_3$ are exceptional and therefore in overlap blocks of size at most $0.53 \gamma$, for each $v \in S_3$, there are at least $\floornk-0.53\gamma \ge 0.46 \gamma$ vertices $w$ such that $\lbrace v,w \rbrace \in E$. However, we have counted each such pair  $\lbrace v,w\rbrace$ twice, and must therefore divide the total number by $2$. So the contribution from $S_2 \cup S_3$ to $|E|$ is at least $\left( {\gamma/2-52-\epsilon/2} \right)s_2+0.23 {\gamma} s_3$.
\end{proof}
By (\ref{key2}) from Section \ref{sectionkeyfacts}, $b^{-\gamma}\sim k^{-2}$, so if $\pi_2 \in  \mc{P} (\mathbf{s}, t,g)$, from Lemma \ref{Part3Lemma3},
\begin{align*}
 b^{-(f-d(\pi_1, \pi_2))} &\le k^{-1.06 s_1 -s_2-0.46 s_3 } \exp\left( O\left( s\right)\right) \le n^{-1.05 s_1} k^{ -s_2-0.46 s_3 } \exp\left( O\left( s\right)\right). 
\end{align*}
Together with Lemmas \ref{Part3Lemma1} and \ref{Part3Lemma2}, this gives
\begin{align}
\sum_{\pi_2 \in \mc{P}(\mathbf{s},t,g)}& b^{-(f-d(\pi_1, \pi_2))} \nonumber \\
&\le  \frac{n^{s_1} k^t 2^{s_3}t^{g}a^{s_3}}{s_1!g!} {k \choose k_1} \frac{(s_1+s_2+g)!}{\tau} k_1! k_2!n^{-1.05 s_1} k^{ -s_2-0.46 s_3 } \exp\left( O\left( s\right)\right)  \nonumber \\
&=  k_1! k_2! {k \choose k_1}n^{-0.05 s_1} t^{g}a^{s_3} \frac{(s_1+s_2+g)!}{s_1!g!\tau}  k^{t -s_2-0.46 s_3 } \exp\left( O\left( s\right)\right)\nonumber \\
&=k_1! k_2! {k \choose k_1}n^{-0.05 s_1} t^{g}a^{s_3} \frac{(s_1+s_2+g)!s_2!}{s_1!s_2!g!\tau}  k^{t -s_2-0.46 s_3 } \exp\left( O\left( s\right)\right). \nonumber
\end{align}
Note that $\frac{(s_1+s_2+g)!}{s_1!s_2!g!} \le 3^{s_1+s_2+g}=   \exp\left( O\left( s\right)\right)$ and by (\ref{equfort}), $s_2! \le s_2^{s_2} \le t^{s_2} a^{s_2}$, so together with~(\ref{equforg}),
\begin{align}
\sum_{\pi_2 \in \mc{P}(\mathbf{s},t,g)} b^{-(f-d(\pi_1, \pi_2))}\le & k_1! k_2! {k \choose k_1}n^{-0.05 s_1} t^{s_2+s_3/\constcone}a^{s_2+s_3} \frac{1}{\tau}  k^{t -s_2-0.46 s_3 } \exp\left( O\left( s\right)\right)\nonumber\\
\le& k_1! k_2! {k \choose k_1}n^{-0.05 s_1} t^{s_2+s_3/\constcone}a^{s_2+s_3} \frac{1}{\tau}  k^{-s_2-0.46 s_3 } \exp\left( O\left( s\right)\right) , \label{biginequ}
\end{align}
as $k^t \le k^{(s_2+s_3)/{(a-3-\epsilon)}}\le \exp(O(s_2+s_3))$ by (\ref{key7}) from Section \ref{sectionkeyfacts}.
Let
\begin{equation*}
T(s_2, s_3) = t^{s_2+s_3/\constcone}a^{s_2+s_3} \frac{1}{\tau}  k^{-s_2-0.46 s_3 } \exp \left( C_2 (s_2+s_3)\right) ,
\end{equation*}
where $C_2>0$ is the constant implicit in the term $ O\left( s\right)$ above. We distinguish two cases.

\begin{itemize}
 \item \textbf{Case $1$:} $s_3 \ge \constcone s_2$.

By (\ref{equfort}), $s_3 \le s_2+s_3\le a t$, so $s_2 \le 0.01at$. Since again by (\ref{equfort}),  $s_2+0.46s_3 \ge 0.46(s_2+s_3) \ge 0.46 {(a-3-\epsilon)} t$, and $t \le k$ and $\tau \ge 1$,
\begin{align}
 T(s_2, s_3) &\le t^{0.02at}a^{a t}  k^{-0.46 (a-3-\epsilon)t } \exp(O(s_2+s_3))\le \parenth {\frac{k^{0.02}   a   }{k^{0.45}}}^{a t}\exp(O(s_2+s_3)) \nonumber \\
&\le n^{-0.4at}  \exp(O(s_2+s_3)) \le  n^{-0.3 (s_2+s_3)}\nonumber 
\end{align}
if $n$ is large enough.

 \item \textbf{Case $2$:} $s_3 < \constcone s_2$.

Then by (\ref{equfort}), $t \le \frac{101s_2}{a-3-\epsilon} \le\frac{51s_2}{\log_b n}$, so $\frac{s_2}{t}\ge \frac{\log_b n}{51}$. By the Stirling approximation of the Gamma function,
\[
 \tau \ge \Gamma \left(s_2/t\right)^t \ge \parenth{\frac{\frac{s_2}{t}-1}{e}}^{s_2-t} \ge (\log n)^{s_2} \exp (O(s_2)).
\]
Furthermore, $t \le \frac{s_2+s_3}{{a-3-\epsilon}}\le \frac{s}{{a-3-\epsilon}} \le \frac{2{c'} n}{{a-3-\epsilon}} \le 3 {c'} k$ if $n$ is large enough, and therefore $\frac{t}{k} \le 3{c'}$ for $n$ large enough. So since $a \le 2 \log_b n = 2 \log n/ \log b$,
\begin{align}
T(s_2, s_3) &\le \left( \frac{t a}{k\log n}\right)^{s_2 } \left(\frac{t^{0.01}a}{k^{ 0.46 }}\right)^{s_3}  \exp(O(s_2+s_3)) \nonumber\\
&\le \left( \frac{6c'}{\log b}\right)^{s_2 } \left(\frac{t^{0.01}a}{k^{ 0.46 }}\right)^{s_3}  \exp(O(s_2+s_3)) \le \parenth{\frac{1}{2}} ^{s_2} n^{-0.3 s_3} \nonumber 
\end{align}
for $n$ large enough if ${c'}>0$ is picked small enough. Pick the constant ${c'} >0$ small enough for this.
\end{itemize}
Therefore in both cases, from (\ref{biginequ}) if $n$ is large enough,
\begin{align}
 \sum_{\pi_2 \in \mc{P}(\mathbf{s},t,g)} b^{-(f-d(\pi_1, \pi_2))} &\le  k_1! k_2! {k \choose k_1}n^{-0.05 s_1} 2 ^{-s_2} n^{-0.3 s_3}\exp(O(s_1)) \nonumber \\
&\le  k_1! k_2! {k \choose k_1}n^{-0.04 s_1} 2 ^{-s_2} n^{-0.3 s_3}. \nonumber
\end{align}
If we sum over $s_1$, $s_2$ and $s_3$ and recall that $g \le s_3/\constcone$ and $t\le s_2+s_3$, by (\ref{p3andptgs}) we get a bound for (\ref{bigfinal}):
\[
 \sum_{\bfr \in \mc{R}_3^{c'}} Q_{\bfr} b^d \le \frac{k_1! k_2! {k \choose k_1}}{\mu_k}\sum_{s_1,s_2,s_3} \bigg(0.01 s_3(s_2+s_3)n^{-0.04 s_1} 2 ^{-s_2} n^{-0.3 s_3} \bigg)  = O \parenth{\frac{k_1!k_2!}{\mu_k}  {k \choose k_1}}.\]
 Let us summarise the calculations from this section in the following lemma.
 \begin{lemma} There is a constant $c'>0$ such that
\begin{equation}   \sum_{\bfr \in \mc{R}_3^{c'}} Q_{\bfr} b^d = O \parenth{\frac{k_1!k_2!}{\mu_k}  {k \choose k_1}}.\label{endofsection}\end{equation}
 \end{lemma}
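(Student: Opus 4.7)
The plan is to fix an arbitrary ordered $k$-equipartition $\pi_1$ and exploit the identity (\ref{bigfinal}), which rewrites $\sum_{\bfr\in\mc{R}_3^{c'}} Q_\bfr b^d$ as $\frac{1}{\mu_k}\sum_{\pi_2\in\mc{P}_3} b^{-(f-d(\pi_1,\pi_2))}$. The task therefore reduces to bounding from below the discrepancy $f-d(\pi_1,\pi_2)$ (so the exponential weight $b^{-(f-d)}$ is small) and counting the number of $\pi_2$ with a given discrepancy sufficiently accurately.

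To make this counting tractable, I would partition $\mc{P}_3$ according to the combinatorial parameters $(\mathbf{s},t,g)$ with $\mathbf{s}=(s_1,s_2,s_3)$, as introduced just after (\ref{p3andptgs}). Since $\bfr\in\mc{R}_3^{c'}$ forces $s=s_1+s_2+s_3\le 2c'n$ and the natural constraints (\ref{equfort}), (\ref{equforg}) hold, summing over these parameters reduces to a finite-dimensional geometric sum. For each fixed $(\mathbf{s},t,g)$ I would apply the three preparatory lemmas in sequence: Lemma \ref{Part3Lemma1} bounds the number of choices for the exceptional sets $S_1,S_2,S_3$ and the arrangement of $S_3$ into overlap blocks; Lemma \ref{Part3Lemma2} then bounds, given the exceptional vertices, the number of $\pi_2$ compatible with these choices, the delicate point being the divisor $\tau$ extracted via Jensen's inequality applied to $\log\Gamma$ to account for overcounting of permutations within parts of $T$; and Lemma \ref{Part3Lemma3} contributes the lower bound $f-d(\pi_1,\pi_2)\ge 0.53\gamma s_1+(\gamma/2-51)s_2+0.23\gamma s_3$, which combined with $b^{-\gamma}\sim k^{-2}$ from (\ref{key2}) yields $b^{-(f-d)}\le n^{-1.05 s_1}k^{-s_2-0.46 s_3}\exp(O(s))$. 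Multiplying these bounds produces a summand of the form $k_1!k_2!\binom{k}{k_1}\cdot n^{-0.05 s_1}\cdot T(s_2,s_3)$ for an explicit factor $T(s_2,s_3)$.

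The main obstacle is showing that $T(s_2,s_3)$ decays quickly enough in $(s_2,s_3)$; the $\tau$-factor is not strong enough by itself in every regime. I would split into two cases. When $s_3\ge 100 s_2$, (\ref{equfort}) forces $s_2\le 0.01 at$, and bounding $t^{0.02at}a^{at}k^{-0.46(a-3-\epsilon)t}$ using $k\sim n/\log n$ yields $T\le n^{-0.3(s_2+s_3)}$. When instead $s_3<100 s_2$, (\ref{equfort}) forces $s_2/t\ge (\log_b n)/51$, so Stirling gives $\tau\ge(\log n)^{s_2}\exp(O(s_2))$; combining with $t/k\le 3c'$ from $s\le 2c'n$ lets one bound $T\le (1/2)^{s_2}n^{-0.3 s_3}$ \emph{provided} $c'$ is chosen small enough to beat the constant coming from $a/(\log b)$. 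This is precisely the point at which $c'$ is pinned down, and it is important that the value needed depends only on absolute constants and $b$ and is not entangled with any previously chosen parameter, so there is no circularity with the use of $c'$ in Section \ref{middle}.

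Finally, with the bound $\sum_{\pi_2\in\mc{P}(\mathbf{s},t,g)} b^{-(f-d)}\le k_1!k_2!\binom{k}{k_1} n^{-0.04 s_1}2^{-s_2}n^{-0.3 s_3}$ in hand, I would absorb the polynomial number of $(t,g)$ values compatible with $(s_2,s_3)$ via (\ref{equfort}) and (\ref{equforg}), and then sum the three resulting geometric series over $s_1,s_2,s_3\ge 0$ to obtain $O(1)$. Dividing by $\mu_k$ produces the claimed bound $O\!\left(\frac{k_1!k_2!}{\mu_k}\binom{k}{k_1}\right)$ and closes the argument.
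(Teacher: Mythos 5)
Your proposal reproduces the paper's argument: fix $\pi_1$, rewrite via (\ref{bigfinal}), decompose $\mc{P}_3$ by $(\mathbf{s},t,g)$, apply Lemmas \ref{Part3Lemma1}--\ref{Part3Lemma3}, split into the two cases $s_3\ge \constcone s_2$ and $s_3<\constcone s_2$ (using $\tau$ and Stirling in the latter, choosing $c'$ small there), and sum the resulting geometric series after absorbing the polynomially many $(t,g)$. This is the same decomposition, the same lemmas, and the same case analysis as the paper.
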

We now turn to the required special case where $\ftn(n) = \epsilon$ for all $n$. As $k=O\parenth{\frac{n}{\log n}}$,  and since by (\ref{exponentialexpectation}) from Section \ref{sectionkeyfacts}, $\frac{\mu_k}{k_1! k_2!} \ge b^{\epsilon n/4}$ for $n$ large enough, we can see that the right-hand side of (\ref{endofsection}) is exponentially decreasing in $n$, and in particular it is $o(1)$ as required. This concludes the proof of Theorem~\ref{maintheorem}. \qed

\section{Outlook}
Shamir and Spencer \cite{shamir1987sharp} showed that for any function $p=p(n)$, the chromatic number of $\Gnp$ is whp concentrated on an interval of length about $\sqrt{n}$, and for constant $p$, this can be improved to an interval of length about $\sqrt{n}/\log n$ (this is an exercise in Chapter 7.3 of \cite{alonspencer}, see also \cite{scott2008concentration}). However, the proof of this concentration result gives no clue about the location of this interval. While our new explicit bounds on the colouring rate match except for a smaller order additive term, the gap between the corresponding implied chromatic number bounds is still at least of order $\frac{n \log\log n}{\log^3 n}$, which of course is asymptotically larger than $\sqrt{n}/\log n$.

Therefore, a more detailed result on the smaller order additive term would be interesting. For $p\le 1-1/e^2$, where the lower bound comes from the first moment threshold for the number of partitions which induce proper colourings \cite{panagiotou2009note}, this gap is unavoidable as long as the upper bound is obtained through the study of partitions which induce balanced colourings, since the first moment thresholds of colourings and balanced colourings are separated by this distance.

For functions $p(n)$ which tend to $0$ sufficiently quickly, much sharper concentration results are known. In particular, for any $\epsilon>0$ and $p=p(n) \le n^{-1/2-\epsilon}$, the chromatic number of $\Gnp$ is concentrated on at most two values whp \cite{shamir1987sharp,luczak1991note,alon1997concentration}, and this is generally the smallest possible interval one can hope for. In contrast, the question of the concentration of the chromatic number of dense random graphs is wide open. Even the most basic non-concentration results, such as showing that we do not in general have two-point concentration, would be interesting (see also~\cite{bollobas:concentrationfixed}).

\subsubsection*{Acknowledgements}
I am grateful to my supervisor Oliver Riordan for many helpful discussions and for his comments on several earlier versions of this paper. I would also like to thank my viva examiners Colin McDiarmid and Andrew Thomason as well as the two anonymous referees for their suggestions and comments which greatly improved the presentation of this paper.

 \bibliographystyle{plainnat}

\appendix

\section{Appendix}

\counterwithin{theorem}{section} 
\setcounter{theorem}{0}

\counterwithin{equation}{section} 
\setcounter{equation}{0}

\subsubsection*{Proof of Fact (\ref{exponentialexpectation}) from Section \ref{sectionkeyfacts}.}
Note that $k_1! k_2! \le k! \exp\parenth{{o(n)}}$, since ${k \choose k_1} \le 2^k$. Furthermore, by (\ref{key1}) we have $k \sim \frac{n}{2\log _b n}$, so with Stirling's formula, $k! = k^{k(1+o(1))} = n^{k(1+o(1))} =b^{n \parenth{\frac{1}{2}+o(1)}}$. From the definition (\ref{defoff}) of $f$,  $q^f=b^{-f}=b^{-\frac{n^2}{2k}+\frac{n}{2}}\exp(o(n))$. Therefore, from (\ref{pasymp}) and (\ref{firstmoment}),
\[
\frac{\mu_k}{k_1!k_2!} = \frac{P q^{f}}{k_1!k_2!} \ge \frac{ k^n q^f}{k!} \exp \parenth{o(n)} = \parenth{k b^{-\frac{n}{2k}}}^n  \exp \parenth{o(n)}.
\]
Note that $\frac{n}{k}= \gamma-x_0-\ftn+o(1) \le \gamma-\ftn+o(1) $, so by (\ref{key2}),
\[
 k b^{-\frac{n}{2k}} \ge kb^{\frac{-\gamma+\ftn+o(1)}{2}}= b^{\ftn/2+o(1)},
\]
and therefore $\frac{\mu_k}{k_1! k_2!} \ge b^{\ftn n /2} \exp(o(n))$.
\qed

\subsubsection*{}
Recall from Section \ref{technicallemmas} that
\begin{align*}
\varphi(x) &= \varphi_n(x) = (1-\Delta+x) \log_b(1-\Delta+x) +(1-\Delta)(\Delta-x)/2\\
\varphi'(x) &= \log_b (1-\Delta+x)+\frac{1}{\log b}-\frac{1-\Delta}{2}\\
\varphi''(x) &= \frac{1}{(1-\Delta+x)\log b} \ge 0,
\end{align*}
and that $x_0 \in [0, \Delta]$ is the smallest nonnegative solution of $\varphi(x) \le 0$.

\subsubsection*{Proof of Lemma \ref{corollary0}.}
 Note that $p\le1-1/e^2$ is equivalent to $\log b \le 2$, and therefore,
\[
\log_b(1-\Delta)+\frac{\Delta}{2} \le \frac{1}{2} \big(\log (1-\Delta)+\Delta\big) \le 0,
\]
since $\log(1-y) \le -y$ for all $y \in [0,1)$. Hence, $\varphi(0)\le0$. \qed

\subsubsection*{Proof of Lemma \ref{corollarybounds}.}
By definition, $x_0\ge 0$. If $\Delta \le 1-\frac{2}{\log b}$, then $x_0 \le \Delta \le 1-\frac{2}{\log b}$. So suppose $\Delta >1-\frac{2}{\log b}$, then the claim follows if we can show $\varphi\parenth{1-\frac{2}{\log b}}\le 0$. Note that $\varphi\parenth{1-\frac{2}{\log b}}\le 0$ is equivalent to $\psi_1\left(2-\frac{2}{\log b}-\Delta\right)\le 0$, where
\[
 \psi_1(y) = y\log y +\frac{\log b}{2}\parenth{y+\frac{2}{\log b}-1}(1-y).
\]
Note that $\psi_1'(y)=\log y -y \log b+ \log b $ and $\psi_1''(y)= \frac{1}{y}- \log b$.

The function $\psi_1$ has no maximum in $\left( 1-\frac{2}{\log b}, 1\right)$: suppose we have $y \in \left( 1-\frac{2}{\log b}, 1\right)$ with $\psi_1'(y)=0$ and $\psi_1''(y)\le0$. It follows that $0=\log y + (1-y)\log b\ge \log y +\frac{1-y}{y}$, but this is a contradiction since $ \log z +\frac{1-z}{z}>0$ for all $z \in (0,1)$.

In the boundary cases $y=1-\frac{2}{\log b}$ and $y=1$, we have $\psi_1(y) \le 0$. Since $1-\frac{2}{ \log b}<\Delta \le 1$, it follows that $2-\frac{2}{\log b}-\Delta \in \left[ 1-\frac{2}{\log b}, 1\right)$, and therefore $\psi_1 \parenth{2-\frac{2}{\log b}-\Delta}\le 0$ as required.
\qed

\subsubsection*{Proof of Lemma \ref{technicallemma1}.}
 Since $x_0>0$, the definition of $x_0$ implies that $\varphi(x) >0$ for all $x \in [0, x_0)$, so by continuity $\varphi(x_0)=0$ and furthermore $\varphi'(x_0)\le 0$. As $\varphi'' \ge \frac{1}{\log b}$ on $[0, \Delta]$ for all $n$, $\varphi''$ is strongly convex on $[0, \Delta]$ with parameter at least $\frac{1}{\log b}$ for all $n$, and the claim follows.
\qed

\subsubsection*{Proof of Lemma \ref{technicallemma2}.}
Since $\Delta<1$, the claim is trivial if $\epsilon \ge 1$, so suppose $\epsilon \in (0,1)$. As $\varphi(\Delta)=0$ and $\varphi(x_0) \le 0$, there cannot be an $x \in (x_0, \Delta)$ such that $\varphi(x)> 0$, otherwise there would have to be a local maximum which is impossible since $\varphi''>0$.

So $\varphi(\Delta-\epsilon)\le0$ and rearranging terms gives
\[
 1-\Delta \le -\frac{(1-\epsilon) \log (1-\epsilon)}{\epsilon}\cdot\frac{2}{\log b},
\]
so we can let $ c_2 = -\frac{(1-\epsilon) \log (1-\epsilon)}{\epsilon}\in (0,1)$ as $\epsilon \in (0,1)$.
\qed

\subsubsection*{Proof of Lemma \ref{technicallemma3}.}
Let\[
   c_3 = \min \parenth{\frac{\epsilon^2}{4 \log b}, \frac{\epsilon'^2}{4 \log b}} >0     .
       \]
Note that $x_0+\epsilon \le \Delta - y \le  \Delta-\epsilon'$. As $\varphi''>0$, $\varphi$ has no internal maxima in $\left(x_0+\epsilon, \Delta-\epsilon'\right)$, so
\[
 \varphi(\Delta-y) \le \max \parenth{\varphi(x_0+\epsilon), \varphi(\Delta-\epsilon')}.
\]
We distinguish two cases.
\begin{itemize}
 \item \textbf{Case 1: } $\varphi(x_0+\epsilon) \ge \varphi(\Delta-\epsilon')$

Then as $\varphi'$ is increasing, $\varphi' (x_0+\epsilon)=\log_b(1-\Delta+x_0+\epsilon)+\frac{1}{\log b}-\frac{1-\Delta}{2} \le 0$. 
For any $z_1,z_2\ge0$ with $z_1+z_2 <1$, we have $\log(z_1+z_2)\ge \log(z_1)+z_2$, so 
\[\varphi'\parenth{x_0+\frac{\epsilon}{2}} \le \varphi'(x_0+\epsilon)-\frac{\epsilon}{2\log b} \le -\frac{\epsilon}{2\log b}.\]
As $\varphi'$ is increasing, $\varphi'(x)<0$ for all $x \in \left[x_0, x_0+\epsilon \right]$, and since by definition $\varphi(x_0)\le 0$, 
\begin{align*}
\varphi\parenth{x_0+\epsilon} &\le \int_{x_0}^{x_0+\epsilon} \varphi'(x) \,\text{d} x \le  \int_{x_0}^{x_0+\epsilon/2} \varphi'(x) \,\text{d} x \le \frac{\epsilon}{2} \varphi'\parenth{x_0+\frac{\epsilon}{2}} \le -\frac{\epsilon^2}{4 \log b} \le-c_3.
\end{align*}

 \item \textbf{Case 2: } $\varphi(x_0+\epsilon) < \varphi(\Delta-\epsilon')$

Then as $\varphi'$ is increasing, $\varphi' (\Delta-\epsilon') =\log_b(1-\epsilon')+\frac{1}{\log b}-\frac{1-\Delta}{2} \ge 0$. For any $z_1,z_2\ge0$ with $z_1+z_2 <1$, we have $\log(z_1+z_2)\ge \log(z_1)+z_2$, so 
\[\varphi'\parenth{\Delta-\frac{\epsilon'}{2}} \ge \varphi'(\Delta-\epsilon')+\frac{\epsilon'}{2\log b} \ge \frac{\epsilon'}{2\log b}.\]
As $\varphi'$ is increasing, $\varphi'(x)\ge 0$ for all $x \ge \Delta-\epsilon$, and since $\varphi(\Delta)= 0$,
\begin{align*}
 \varphi(\Delta-\epsilon') &=-\int_{\Delta-\epsilon'}^\Delta \varphi'(x) \,\text{d} x \le -\int_{\Delta-\epsilon'/2}^\Delta \varphi'(x) \,\text{d} x \le-\frac{\epsilon'}{2} \varphi'\parenth{\Delta-\frac{\epsilon'}{2}}\\
&\le - \frac{\epsilon'^2}{4\log b} \le-c_3.
\end{align*}
\end{itemize}
\qed

\subsubsection*{Proof of Lemma \ref{technicallemma4}.}
 Let
\[
 \psi_2(x) = (1-x) \log_b (1-x)+\frac{\Delta}{2}(1-x)-\frac{x_0+\epsilon}{2}. 
\]
Note that $\psi_2(\Delta-x_0-\epsilon) =\varphi(x_0+\epsilon) $. Furthermore, $\lim_{x \rightarrow 1}\psi_2(x)=-\frac{x_0+\epsilon}{2} \le -\frac{\epsilon}{2}$. Since $\psi_2''(x) = \frac{1}{(1-x)\log b} >0$ for $x \in(0,1)$, $\psi_2$ has no internal maxima in $(0,1)$, so since $\Delta-x_0-\epsilon\le y \le 1$,
\[
 \psi_2(y) \le \max \parenth{\psi_2(\Delta-x_0-\epsilon), -\frac{\epsilon}{2}} \le \max \parenth{\varphi(x_0+\epsilon), - \frac{\epsilon}{2}}.
\]
Applying Lemma \ref{technicallemma3} to $y'= \Delta-x_0-\epsilon$, we can see that $\varphi(x_0+\epsilon) \le -c_3(\epsilon, \epsilon')$. Letting 
\[
 c_4 = \min \parenth{c_3(\epsilon, \epsilon'), \frac{\epsilon}{2}} >0,
\]
it follows that $\psi_2(y) \le-c_4$ for all $\Delta-x_0-\epsilon\le y \le 1$.
\qed

\subsubsection*{Proof of Lemma \ref{lemmaatmostnconstant}.}
As usual, we assume throughout that $n$ is large enough for our various bounds to hold. First, note that
\begin{equation}
 \frac{T_{i+1}}{T_i} = \frac{e^\rho b^{i+1 \choose 2}\left( a-i\right)!^{2}}{b^{i \choose 2}n (i+1)\left( a-i-1\right)!^{2}} = \frac{e^\rho b^i (a-i)^2}{n(i+1)}. \label{ratioofterms}
\end{equation}
Now consider $i=2$: since $a \sim \frac{n}{k} = O(\log n)$ by (\ref{key3}) in Section \ref{sectionkeyfacts},
\begin{align*}
T_2&= \frac{e^{2\rho }b^{2 \choose 2} k^{2} a!^{2}}{ n^2 2!\left( a-2\right)!^{2}}\le \frac{e^{2 }b k^{2} a^{4}}{ 2n^2 } = O \parenth{\log^2 n}\le n^{1-c_5}.
\end{align*}
By (\ref{ratioofterms}), for $i\le 5$,
\[
T_{i+1} =  O\parenth{ \frac{\log^2 n}{n} } T_i \le n^{-1+o(1)} T_i, 
\]
so in particular for all $3\le i\le 6$,
\[
 T_i \le T_3 \le n^{-1+o(1)} O(\log^2 n) \le n^{-c_5}.
\]
For $7 \le i \le 1.2 \log_b n$, note that as $a \le 2 \log_b n$,
\begin{align*}
T_i \le \frac{e^{ i}b^{\frac{i^2}{2}} n^{2}a^{2i}}{ n^i} = n^2 \parenth {\frac{eb^{\frac{i}{2}}a^2}{n}}^i \le n^2 \parenth{ \frac{4eb^{0.6 \log_b n} \log_b^2 n }{n}  }^i \le n^{2-0.3i} \le n^{-0.1} \le n^{-c_5}.
\end{align*}
For $i\ge1.2 \log_b n$,
\begin{equation}\label{quot1}
 \frac{T_{i+1}}{T_i}  = \frac{e^\rho b^i (a-i)^2}{n(i+1)} \ge n^{0.2+o(1)} \ge 1,
\end{equation}
so for all $1.2 \log_b n \le i \le \ceilnk-1$,
\[
 T_i \le T_{\ceilnk-1}.
\]
So it only remains to show that $T_{\ceilnk-1} \le n^{-c_5}$ and $T_\ceilnk \le n^{1-c_5}$. For this, we first take a look at $T_a$. Since $a=\gamma-\Delta+1$, by (\ref{key2}),
\[
 b^{a \choose 2} = b^{(\gamma-\Delta)(\gamma-\Delta+1)/2}=b^{\frac{\gamma}{2}(\gamma+1-2\Delta)}n^{o(1)} = ((1+o(1))k)^{\gamma+1-2\Delta} n^{o(1)} = k^{a-\Delta}n^{o(1)},
\]
so by Stirling's formula,
\[
 T_a =  \frac{e^{\rho a}b ^{{a \choose 2}} k^2 a!}{ n^a} \sim \frac{e^{ \rho a} k^{a-\Delta +2}n^{o(1)}\sqrt{2\pi a}a^a}{ n^ae^a} \le \parenth{\frac{ka}{n}}^a n^{2-\Delta}e^{-(1-\rho)a}n^{o(1)}.
\]
Since by (\ref{key3}), $a  \sim \frac{n}{k} \sim 2 \log_b n$ and as $\bfr \in \mc{R}_1$, this gives
\[
 T_a \le n^{2-\Delta-(1-\rho)\frac{2}{\log b}+o(1)}\le n^{2-\Delta-(1-c)\frac{2}{\log b}+o(1)}.
\]
For $i \le a-1$ with $a-i=O(1)$, by (\ref{ratioofterms}) and (\ref{key2}) and since $a= \left \lfloor \gamma \right \rfloor+1$,
\begin{equation}\label{ratioaanda}
 \frac{T_{i+1}}{T_{i}} \le \frac{ n^{o(1)}b^i }{n} = n^{1+o(1)}.
\end{equation}
Therefore,
\[
 T_{a-1} = T_a n^{-1+o(1)} \le n^{1-\Delta-(1-c)\frac{2}{\log b}+o(1)}.
\]
To bound $T_\ceilnk$, we need to distinguish between two cases. By (\ref{keyceilnk}), $\ceilnk\le a$.
\begin{itemize}
 \item \textbf{Case 1:} $\ceilnk=a$.
 
It follows that $\frac{n}{k} > a-1 = \left \lfloor \gamma \right \rfloor = \gamma -\Delta$. But since $\ftn(n)=\epsilon$ for all $n$, we also have $\frac{n}{k}\le \gamma-x_0-\epsilon$. Therefore, $x_0+\epsilon \le \Delta$. By Lemma \ref{technicallemma2},
\[
 1-\Delta < \frac{2 c_2}{\log b}.
\]
By the definition (\ref{defofc}) of $c$, $c_2=1-2c$ and since $c_5 \le \frac{c}{2\log b}$,
\[
 T_\ceilnk = T_a \le n^{2-\Delta-(1-c)\frac{2}{\log b}+o(1)}  \le n^{1-\frac{2c}{\log b}+o(1)}\le n^{1-2c_5}.
\]
\item \textbf{Case 2:} $\ceilnk\le a-1$.

By the definition of $c_5 \le \frac{1-c}{2\log b}$ and by (\ref{quot1}),
\[
  T_\ceilnk \le T_{a-1} \le n^{1-(1-c)\frac{2}{\log b}+o(1)}\le n^{1-2c_5}.
\]
\end{itemize}
So in both cases, $T_\ceilnk \le n^{1-2c_5}\le n^{1-c_5}$. By (\ref{ratioaanda}),
\[
 T_{\ceilnk-1} \le n^{-1+o(1)}  T_\ceilnk \le n^{-c_5}.
\]
\qed

\subsubsection*{Proof of Lemma \ref{contributionnottoolarge}.}
For $3 \le i \le \ceilnk-1$, the previous lemma gives
\begin{equation*}
 \frac{1}{r_i!}T_i^{r_i} \le n^{-c_5 r_i} \le n^{-c_5 r_i/2}. 
\end{equation*}
Now suppose $i \in \left \lbrace 2, \ceilnk \right \rbrace$. If $r_i \le \frac{n}{\log^{11} n}$, then
\begin{equation*}
 \frac{1}{r_i!} T_i^{r_i} \le n^{(1-c_5)r_i}\le n^{-c_5 r_i/2}\exp \parenth{\frac{n}{2\log^{10} n}} .
\end{equation*}
Otherwise, if $r_i > \frac{n}{\log^{11} n}$, then since $r_i! \ge r_i^{r_i}/e^{r_i}$,
\begin{equation*}
 \frac{1}{r_i!} T_i^{r_i} \le \parenth{\frac{en^{1-c_5}}{r_i}}^{r_i} \le    \parenth{en^{-c_5} \log^{11} n}^{r_i} \le n^{-c_5r_i/2}.
\end{equation*}
Together with (\ref{eq6}), this gives the result.
\qed

\subsubsection*{Proof of Lemma \ref{implies}.}
By the definition of $\mc{R}_2^{c'}$, it suffices to show that if $\sum_{2 \le i \le 0.6\gamma} i r_i\ge {c'} n$, then \ref{condition1} or \ref{condition2} holds. So suppose that $\sum_{2 \le i \le 0.6\gamma} i r_i\ge {c'} n$. Of those vertices that are in overlap blocks of size at most $0.6 \gamma$, either at least ${c'} n/2$ are in parts of size $a$ or at least ${c'} n/2$ are in parts of size at most ${a-1}$ in $\pi_1$.

So say that at least ${c'} n/2$ of them are in parts of size $a$. In particular, $v_1 \ge {c'} n/2 \ge \frac{n}{\parenth{\log \log n}^2}$. Furthermore, if we denote by $\hat r_i$ the number of overlap blocks of size $i$ in parts of size $a$ in $\pi_1$, then 
\[d_1=\sum_{i=2}^a {i \choose 2}\hat r_i \le 0.3 \gamma  \sum_{2 \le i \le 0.6\gamma} i \hat  r_i + \frac{a-1}{2}\sum_{ 0.6\gamma<i\le a} i \hat  r_i.\]
Since $0.3 \gamma \le \frac{a-1}{2}$ and $\sum_{2 \le i \le 0.6\gamma} i \hat r_i \ge {c'} n / 2$ and $\gamma \le \left\lfloor \gamma \right \rfloor+1 =a$, this is at most
\[ \frac{0.3\gamma c' n}{2} + \frac{a-1}{2}\parenth{v_1-\frac{{c'} n}{2}}\le \frac{a-1}{2}v_1-0.05ac'n.\]
Therefore, 
\[\beta_1 = \frac{2d_1}{v_1 \parenth{a-1}} \le 1-(0.1+o(1))\frac{{c'} n}{v_1}.\]
As $v_1 \le n$,  this is at most 
\[1-0.05 {c'} < 1-\frac{(\log \log n)^4}{\log n},\]
so I holds if $n$ is large enough.

The second case is analogous and implies II.
\qed

\end{document}